\DeclareMathAlphabet{\mathcal}{OMS}{cmsy}{m}{n} %reset \mathcal; in mathptmx \mathcal=mathscr  
\newcommand{\blue}[1]{{ #1}}
\newcommand*{\vs}{\vskip.05in}
\newcommand{\R}{ {\mathbb{R}} }
\newcommand{\E}{ {\mathbb{E}} }
\renewcommand{\P}{ {\mathbb{P}} }
\newcommand{\ep}{\varepsilon}
\renewcommand{\and}{\quad\textrm{ and }\quad}
\renewcommand{\S}{\mathcal{S}}
\renewcommand{\P}{\mathbb{P}}
\renewcommand{\a}{\alpha}
\renewcommand{\b}{\beta}
\newcommand{\T}{\mathbb{T}}
\newcommand{\ve}{\varepsilon}
\newcommand{\Div}{\textnormal{Div}}
\newcommand{\osc}{\textnormal{osc}}
\newcommand{\N}{\mathbb{N}}
\newcommand{\Z}{\mathbb{Z}}
\newcommand{\Ec}{\mathcal{E}}
\newcommand{\Pc}{\mathcal{P}}
\theoremstyle{theorem}
\newtheorem{theorem}{Theorem}[section]
\newtheorem{example}[theorem]{Example}
\newtheorem*{thm*}{Theorem}
\newtheorem{lemma}[theorem]{Lemma}
\theoremstyle{definition}
\newtheorem{definition}[theorem]{Definition}
\theoremstyle{assumption}
\theoremstyle{proposition}
\newtheorem{proposition}[theorem]{Proposition}
\theoremstyle{corollary}
\newtheorem{corollary}[theorem]{Corollary}
\theoremstyle{remark}
\newtheorem{remark}[theorem]{Remark}
\numberwithin{equation}{section}
\title{Long-time behaviour of stochastic Hamilton-Jacobi equations}
\author{Paul Gassiat, Benjamin Gess, Pierre-Louis Lions, Panagiotis E. Souganidis}
\date{\today}
\newlength{\bibitemsep}\setlength{\bibitemsep}{.2\baselineskip plus .05\baselineskip minus .05\baselineskip}
\newlength{\bibparskip}\setlength{\bibparskip}{0pt}
\let\oldthebibliography\thebibliography
\renewcommand\thebibliography[1]{%
  \oldthebibliography{#1}%
  \setlength{\parskip}{\bibitemsep}%
  \setlength{\itemsep}{\bibparskip}%
}
\renewenvironment{abstract}
{
\begin{center}
\begin{minipage}{.9\textwidth}\small\textbf{Abstract}\noindent
}
{
\end{minipage}
\end{center}
}
\newenvironment{keywords}
{
\begin{center}
\begin{minipage}{.9\textwidth}\small\textbf{Keywords}:\noindent
}
{
\end{minipage}
\end{center}
}
\newenvironment{msc}
{
\begin{center}
\begin{minipage}{.9\textwidth}\small\textbf{MSC 2020}:\noindent
}
{
\end{minipage}
\end{center}
}
\begin{document}

\maketitle

\begin{abstract}
The long-time behavior of stochastic Hamilton-Jacobi equations is analyzed, including the stochastic mean curvature flow as a special case. In a variety of settings, new and sharpened results are obtained. Among them are  (i)~a regularization by noise phenomenon for the mean curvature flow with homogeneous noise which establishes that the inclusion of noise speeds up the decay of solutions, and (ii)~the long-time convergence of solutions to spatially inhomogeneous stochastic Hamilton-Jacobi equations. A number of motivating examples about nonlinear stochastic partial differential equations are presented in the appendix. 
\end{abstract}

\begin{keywords} Stochastic Hamilton-Jacobi equations, stochastic mean curvature flow, long-time behavior, regularization by noise.

\end{keywords}

\begin{msc} 60H15, 65M12, 35L65.
\end{msc}

\tableofcontents

\section{Introduction}

\subsection{The general problem}  This work is a contribution towards the development of a systematic understanding of the large time behavior of pathwise solutions of nonlinear  first- and second-order  stochastic partial differential equations (SPDEs for short) of the general form 
\begin{equation}\label{eq:general_SPDE}
du=F(Du,D^{2}u) dt +\sum_{i=1}^{m}H^{i}(x,Du)\circ d{B}^{i}\ \ \text{in } \ \ \mathbb{T}{}^{d}\times\R_{+},
\end{equation}
and
\begin{equation}\label{eq:ph-spde.1}
dv=\Div(F'(v)D v) dt+ \sum_{\substack{1\le i\le m\\ 1\le j \le d}}\partial_{x_{j}}(H^{i,j}(x,v))\circ{dB^{i}}\ \ \text{in } \ \ \T^{d}\times \R_+,
\end{equation}
with $B^{i}$ being independent Brownian motions, $m\in\N$,  $F,H^{i},H^{i,j}$ nonlinear functions, $\mathbb{T}{}^{d}$ the $d$-dimensional torus and $\R_{+}=(0,\infty)$. 
\vs
The solutions to \eqref{eq:general_SPDE} and \eqref{eq:ph-spde.1} are obtained as uniform limits of solutions to the corresponding ``deterministic'' problems with smooth paths approximating the Brownian motions.  %$B^{i}$. 
This fact is signified  in \eqref{eq:general_SPDE}  and \eqref{eq:ph-spde.1} by the use of the Stratonovich notation ``$\circ$''. 
\vs
The analysis of the long-time behavior of solutions to \eqref{eq:general_SPDE} and \eqref{eq:ph-spde.1} entails several challenges including the possible degeneracy of the dissipation $F$, and the spatial inhomogeneity of the Hamiltonians $H^{i},H^{i,j}$. 
\vs

SPDEs like \eqref{eq:general_SPDE} and \eqref{eq:ph-spde.1} appear in a variety of applications, including mean field systems with common noise, fluctuating geometric partial differential equations (PDEs for short) such as the stochastic mean curvature flow, and non-equilibrium thermodynamics. We refer to the appendix for more details. 
\vs

Throughout the paper, when dealing with \eqref{eq:general_SPDE}  we work with the so-called stochastic (viscosity) solutions to \eqref{eq:general_SPDE} which were 
developed in a series of papers by the third and fourth authors, see, for example, Lions and Souganidis \cite{LS98,LS98a,LS18+,Li.So2020b,LSS20} as well as Souganidis \cite{So.19} for an extensive overview. The well-posedness of solutions to \eqref{eq:ph-spde.1} follows in many case from the theory of stochastic kinetic solutions developed by Lions, Perthame and Souganidis \cite{Li.Pe.So2013,Li.Pe.So2014}, Gess and Souganidis \cite{Ge.So2015,Ge.So2017}, and its extension to parabolic-hyperbolic SPDEs by Gess and Souganidis \cite{Ge.So2017-2} and Fehrman and Gess \cite{Fe.Ge2019,Fe.Ge2021}. We refer again to \cite{So.19} for an overview.
\vs

In all of the examples we consider here, the typical result  is that, in the long time limit,  the solutions of the SPDEs converge almost surely (a.s.\ for short) to stationary solutions. In the spatially homogeneous case, these are random constants. In some problems, while similar behavior is also observed for the deterministic version of the equations, the presence of stochasticity is shown to accelerate the convergence. In others, although it is known that, in general, solutions to  the deterministic analogues of the problems do not converge, the effect of the stochastic perturbation is such that the deterministic obstacles can be  overcome and the solutions relax to constants. 
\vs

The acceleration of the convergence to constants in the presence of random perturbations  can be explained heuristically as follows. In the examples we consider here, the decrease of the oscillations due the second-order diffusion in \eqref{eq:general_SPDE} decays for large values of the gradient $Du$. Therefore, for large initial gradient $Du$, the decrease of the  oscillations due to the stochastic part of \eqref{eq:general_SPDE} dominates, and yields  an accelerated rate of convergence, compared to the deterministic problem.  The derivation of such quantitative rates of convergence due to  the stochastic parts of the equations is one of the main contributions of the present work. As a main application of these, we prove an increased decay of oscillations in the stochastic mean curvature flow due to the stochastic fluctuations.
\vs

\subsection{The main results} We describe in an informal way, that is, without stating precise theorems, the 
main results obtained in this work, which can be classified in the following  three cases: (i)~any  space dimension and multiple spatially homogeneous noises, (ii)~any space dimension, spatially inhomogeneous noise, and vanishing drift $F$, and (iii)~one spatial dimension and spatially homogeneous noise.

\subsubsection*{Any space  dimension and multiple spatially homogeneous noises}

We consider here SPDEs of the form
\begin{equation}
du = F(Du,D^2 u)dt + \sum_{i=1}^m H_i(Du)  \circ{dB^i}\ \ \text{in } \ \ \mathbb{T}{}^{d}\times\R_{+},\label{eq:intro-multid-homogeneous}
\end{equation}
and present a result yielding  the convergence to constants for large times. The argument is qualitative and does not give a convergence rate. It applies, however, to general settings, and,  in particular, does not impose any restrictions on the dimension.

\vs
Given  $\alpha, \beta^1, \ldots, \beta^m \in \overline \R_+$, let  $\big(S_{\alpha F + \sum_{i=1}^K \beta^i  H_i}(t)\big)_{t\geq 0}$ denote the solution operator, that is the nonlinear semigroup, associated to the deterministic problem
\begin{equation}\label{det}
u_t = \alpha F(Du,D^2 u)+ \sum_{i=1}^m  \beta^i H_i(Du) \ \ \text{in } \ \ \mathbb{T}{}^{d}\times\R_{+}. 
\end{equation}

\begin{theorem}[See Theorem \ref{takis81} below] 
Assume that $F$ is continuous and  non-decreasing in $D^2 u$, $F(0,0)=0$, each $H_i$ is the difference of two convex functions with $H_i(0)=0$ for $i=1,\ldots, m$, and  there exist $\alpha_n, \beta^1_n, \ldots, \beta^m_n\in [0,\infty)$ such that, for all $v \in C(\T^d)$, 
\begin{equation} \label{eq:intro-asn4}
 \lim_{n \to \infty}  S_{\alpha_n F + \sum_{i=1}^m \beta^i_n  H_i}(1)(v) = \mbox{constant}.
 \end{equation} 
Then, almost surely, every solution $u(\cdot,t)$ to \eqref{eq:intro-multid-homogeneous} converges, as $t \to \infty$,  uniformly to a constant. 
\end{theorem}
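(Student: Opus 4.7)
My approach combines a deterministic monotonicity of the oscillation with a support-theorem argument that exploits hypothesis \eqref{eq:intro-asn4}.

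First, because $F(0,0)=0$ and each $H_i(0)=0$, every constant is a stationary solution of \eqref{eq:intro-multid-homogeneous}. The comparison principle for stochastic viscosity solutions gives $\min u(\cdot,s)\le u(x,t)\le \max u(\cdot,s)$ whenever $t\ge s$, so $t\mapsto\osc(u(\cdot,t))$ is almost surely non-increasing, and the a.s.\ limit $L:=\lim_{t\to\infty}\osc(u(\cdot,t))\in[0,\osc(u_0)]$ exists. It suffices to prove $L=0$ a.s.

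The key stochastic input is that the solution map of \eqref{eq:intro-multid-homogeneous} is continuous in the driving path in the uniform topology, by the Stratonovich/smooth-path definition used in the Lions--Souganidis theory. For each $n$ with $\alpha_n>0$ (the case $\alpha_n=0$ is similar, by running the equation over a short interval and conditioning on the Brownian increments being close to $\beta^i_n$), define the linear path $\phi_n:[0,\alpha_n]\to\R^m$ by $\phi_n^i(t):=(\beta^i_n/\alpha_n)\,t$. If the driving signal on $[T_0,T_0+\alpha_n]$ were exactly $B(T_0)+\phi_n(\cdot-T_0)$, the equation would reduce to $u_t=F(Du,D^2u)+(1/\alpha_n)\sum_i\beta^i_n H_i(Du)$, whose time-$\alpha_n$ flow equals $S_{\alpha_n F+\sum_i\beta^i_n H_i}(1)$ after a standard time rescaling. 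The support theorem for Brownian motion, combined with the Markov property, then yields, for every $\eta>0$ and every $T_0\ge 0$,
\begin{equation*}
\P\Big(\sup_{t\in[0,\alpha_n]}\big|B(T_0+t)-B(T_0)-\phi_n(t)\big|<\eta\,\Big|\,\F_{T_0}\Big)>0 \quad\text{a.s.,}
\end{equation*}
and on this event the continuity of the solution map makes $u(\cdot,T_0+\alpha_n)$ close in sup-norm to $S_{\alpha_n F+\sum_i\beta^i_n H_i}(1)u(\cdot,T_0)$, which by \eqref{eq:intro-asn4} is itself close to a constant for large $n$.

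I would close the argument by contradiction: if $\P(L>\epsilon)>0$ for some $\epsilon>0$, pick times $T_0^k\uparrow\infty$ and indices $n(k)$ so that the good event described above has positive $\F_{T_0^k}$-conditional probability at each scale, and apply the conditional Borel--Cantelli lemma to conclude that the good event occurs infinitely often a.s., contradicting $\osc(u(\cdot,t))\ge L>\epsilon$ for all $t$. The main obstacle is that the tolerance $\eta$ and the index $n$ needed to make the above approximation effective depend on the random data $u(\cdot,T_0^k)$ and must be chosen uniformly in $k$. I would address this by establishing an a priori equicontinuity estimate for the orbit $\{u(\cdot,t)\}_{t\ge 0}$, which yields pre-compactness in $C(\T^d)$ modulo constants; then, along a subsequence, extract a random limiting profile $u_\infty$ with $\osc(u_\infty)=L$, and apply hypothesis \eqref{eq:intro-asn4} to $u_\infty$ to select $n$ uniformly.
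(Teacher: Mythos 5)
Your proposal is essentially the same as the paper's argument: monotonicity of the oscillation, equicontinuity yielding precompactness of the orbit, continuity of the solution map in the driving signal, full support of Brownian motion, and hypothesis \eqref{eq:intro-asn4} applied along a compact family. The paper organizes it a bit more directly --- it proves compactness of $\mathcal{K}(u_0) = \{v(\cdot,t):\ v \text{ solves } \eqref{eq:sec4}\text{ for some continuous path}\}$ via translation/comparison, upgrades \eqref{eq:intro-asn4} to uniform convergence over $\mathcal{K}(u_0)$, and then constructs the good times $T_n$ via an elementary i.i.d.\ block argument for Brownian increments rather than your contradiction plus conditional Borel--Cantelli --- but these are cosmetic differences, and your identification of the ``uniform choice of $n$'' obstacle and the compactness fix is exactly the crux.
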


The heuristic reason behind this result is the fact that, if some combination of the diffusion and noise coefficients of the equation forces convergence, then, by elementary properties of Brownian paths, a.s.~and after some time the Brownian motions will stay close to this combination for a long period of time, after which the solution will be close to a constant. 
\vs 
Note that  \eqref{eq:intro-asn4} includes examples of  $F,$ $H$ for which the separate dynamics $S_{F}$, $S_{H}$ do not converge to constants. In this sense, the result includes an effect of stabilization by noise, where global asymptotic stability is produced by the inclusion of the stochastic fluctuations $\sum_{i=1}^m H_i(Du)  \circ{dB^i}$.
\vs
In particular, the result can be applied to the stochastic mean curvature equation in multiple dimensions implying the convergence of the solution to a constant. In a non-quantitative form, this extends the results obtained in Dabrock,  Hofmanov\'{a} and R\"{o}ger \cite{DHR21} to non-Lipschitz continuous initial data.

\subsubsection*{Any  space dimension, spatially inhomogeneous noise and vanishing drift}

The analysis of the long-time behavior of solutions to \eqref{eq:general_SPDE} with spatially inhomogeneous noise is completely open. Motivated from this, we consider here the simpler case of SPDEs of the type 
\begin{equation}
%\partial_{t}
du=H(x,Du)\circ{dB}\ \ \text{on } \ \ \mathbb{T}{}^{d}\times\R_{+},\label{eq:intro-inhomogeneous}
\end{equation}
with Hamiltonian $H=H(x,p)$ convex in the $p$-variable. 
\vs
The  deterministic version of the problem, that is, $u_t=H(x,Du)$,  has attracted much attention in the literature; see, for example, Fathi \cite{F98}, Namah and Roquejoffre \cite{NR99}, Barles and Souganidis \cite{BS00}, and Barles, Ishii and Mitake  \cite{BIM13}. These works provide conditions under which the deterministic semigroups converge, in the sense that there exists $c\in\R$ such that, for all $u\in C(\T^{N})$  and uniformly as $t\to \infty$, 
\[
\lim_{t\to \infty}\|S_{H}(\pm t)u\pm ct -\phi^{\pm}\|=0,
\]
%uniformly as $\mbox{ }D\to\infty$, 
where $\phi^{\pm}$ solves, in the viscosity sense, 
\begin{equation}
\pm%\tag{\ensuremath{E^{+}}}
(c+H)(x,D\phi^{\pm})=0\mbox{ in }\T^{d}.\label{eq:E+}
\end{equation}

We obtain the following analogous result in the stochastic case.

\begin{theorem}[See Theorem \ref{thm:Hconvfwd} below]
Assume that $H=H(x,p)$ is strictly convex in $p$. Then, there exist $\phi^\pm:\T^d\to \R$ satisfying \eqref{eq:E+}, and  a global-in-time, statistically stationary, solution $\psi:\R\times\T^{d}\to\R$ to
\begin{equation}
d\psi={(c+H)}(x,D\psi)\circ{dB} \ \ { on }\ \ \T^{d}\times \R,
\end{equation}
satisfying 
\[\phi^{-}\leq\psi(\cdot, t)\leq\phi^{+} \ \ \text{ for all} \ \  t\in\R_+ \quad \text{and} \quad \limsup_{t\to\infty}\psi(\cdot,t)=\phi^{+},\;\;\;\liminf_{t\to\infty}\psi(\cdot, t)=\phi^{-},\]  
so that for the solution $u$ to \eqref{eq:intro-inhomogeneous} we have 
\[
\lim_{t\to  \infty} \|u(\cdot,t) + cB(t) - \psi(\cdot,t)\|=0.
\]
\end{theorem}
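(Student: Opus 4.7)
The plan is to lift the SPDE to a pathwise deterministic problem via a Stratonovich shift and then carry out a stochastic analogue of the deterministic weak KAM program of \cite{F98,NR99,BS00}. The ergodic constant $c$ and the cell solutions $\phi^\pm$ come directly from the deterministic theory: $c$ is the unique real number for which the cell problem $(c+H)(x,D\phi)=0$ admits a continuous periodic viscosity solution on $\T^d$, and strict convexity of $H$ in $p$ singles out extremal viscosity solutions $\phi^+$ of $(c+H)(x,D\phi^+)=0$ and $\phi^-$ of $-(c+H)(x,D\phi^-)=0$, normalised so that $\phi^-\le \phi^+$ pointwise on $\T^d$.

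The pathwise trick is the change of unknown $v(x,t):=u(x,t)+cB(t)$. Since $cB(t)$ is spatially constant, $Du=Dv$, and the Stratonovich chain rule gives that $v$ solves the corrected SPDE $dv=(c+H)(x,Dv)\circ dB$. The crucial observation is that $\phi^+$ and $\phi^-$ serve as pathwise invariant barriers for this corrected equation: because they solve the cell problem in the appropriate viscosity sense, they are stochastic viscosity sub- and supersolutions of $dv=(c+H)(x,Dv)\circ dB$, so the comparison principle for stochastic viscosity solutions \cite{LS98,LS18+} traps any $v$ with $\phi^-\le v(\cdot,0)\le \phi^+$ inside the sandwich $\phi^-\le v(\cdot,t)\le \phi^+$ for all $t\ge 0$. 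To build $\psi$, I would run the corrected equation from the distant past: for some $v_0\in[\phi^-,\phi^+]$ let $v^{(T)}$ solve the corrected equation on $[-T,\infty)$ with $v^{(T)}(\cdot,-T)=v_0$ and pass to the limit $T\to\infty$. Monotonicity under comparison together with the uniform sandwich yields a pathwise limit $\psi(x,t)$, defined globally in time, sandwiched by $\phi^\pm$, and statistically stationary under Brownian time-shifts by construction (the shifted construction is invariant in law under $B \leftrightarrow B(\cdot+s)-B(s)$).

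The heart of the argument is to show $\|u(\cdot,t)+cB(t)-\psi(\cdot,t)\|\to 0$ almost surely and to identify $\limsup_{t\to\infty}\psi(\cdot,t)=\phi^+$ and $\liminf_{t\to\infty}\psi(\cdot,t)=\phi^-$. For this I would extract from strict convexity of $H$, via a doubling-of-variables / semi-concavity argument, a pathwise dissipation estimate of the following type: the sup-norm distance between any two sandwiched solutions of the corrected equation contracts whenever $B$ makes a sufficiently long one-sided excursion, and such excursions simultaneously drive the corrected dynamics towards the corresponding extremal fixed point $\phi^\pm$. Combining the contraction with recurrence of Brownian motion and Borel--Cantelli forces the distance between $v(\cdot,t)$ and $\psi(\cdot,t)$ to vanish, while the two extremes of $\psi$ are attained in the long-time limit along the corresponding sequences of favourable excursions. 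The main obstacle I anticipate is precisely this pathwise contraction step: the noise $B$ is scalar and provides no averaging of its own, so all contraction has to be extracted from strict convexity of $H$ combined with oscillations of $B$, and this requires translating the deterministic weak-KAM objects (Peierls barrier, Aubry--Mather set) into stochastic pathwise estimates uniform in the Brownian path. A secondary technical point is verifying that the shift $v=u+cB$ is compatible with the notion of stochastic viscosity solution of \cite{LS98,LS18+}, so that $v$ genuinely solves the corrected SPDE in that framework.
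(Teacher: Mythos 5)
Your setup is sound and matches the paper's scaffolding: the shift $v=u+cB$ to the corrected equation, the role of $\phi^\pm$ as invariant barriers via comparison, the construction of $\psi$ by solving from the far past and passing to the limit, and stationarity in law from Brownian shift invariance. These pieces all appear in Proposition~\ref{prop:psi} and Lemma~\ref{lem:psifwd}, and your candidate construction is essentially correct.

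The gap is in what you call the heart of the argument. You propose to extract from strict convexity a quantitative pathwise contraction of the sup-norm distance between sandwiched solutions during favourable Brownian excursions, and then run Borel--Cantelli. This is not how the paper proceeds, and it is unlikely to work as stated: the Hamilton--Jacobi solution operator is non-expansive in $\|\cdot\|_\infty$ (comparison), but strict convexity of $H$ does not by itself upgrade non-expansion to a strict contraction, and no such estimate is attempted in the paper. What the paper actually uses is the convex-duality \emph{monotonicity identity} $S_{-H}(t)S_H(t)\leq \mathrm{Id}\leq S_H(t)S_{-H}(t)$ (Lemma~\ref{lem:mon}), together with Lemma~\ref{lem:inf}, which lets one sandwich $\S_H^{\xi,[s,t]}$ between compositions of the one-sided deterministic semigroups read off from the running max and min of the path. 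Iterating this along the alternating extremal times $T_{2n},T_{2n+1}$ gives the key inequality $\S_H^{\xi,[T_{2n},T_{2n+2m}]}(\phi^-)\geq\phi^-$. Combined with the equicontinuity from assumption \eqref{takis91} (this is where strict convexity enters, to guarantee compactness, not a contraction rate), any subsequential limit of $u(\cdot,T_{2n})$ is pinned down to a single $\phi^-$, and then the ordinary comparison principle transports this to $\lim_n\|u(\cdot,T_n)-\psi(\cdot,T_n)\|_\infty=0$, hence to all large times. Crucially, this makes the convergence a \emph{deterministic, pathwise} statement valid for any continuous $\xi$ with $\limsup\xi=-\liminf\xi=+\infty$, with no Borel--Cantelli and no probabilistic averaging; the Brownian hypothesis only enters afterwards for the stationarity and independence statements. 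You should replace the contraction heuristic by this monotonicity mechanism; without it your proof does not close, and the worry you flag about scalar noise providing no averaging is indeed fatal to the contraction route.
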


A particular  case is  $H(x,p)= a(x)\sqrt{1+|p|^2}$ with $a(\cdot)> 0$, which corresponds to the motion of a graph interface driven by isotropic noise, see Section \ref{sec:front_propagation} below. 
\vs

We note that it is possible to replace  the strict convexity of $H$ by  only convexity  and the assumptions that the 
deterministic semigroups converge and the solutions to \eqref{eq:intro-inhomogeneous} are equicontinuous in $x$. 
\vs 

We comment next on the proof of the theorem above. One of the main difficulties in the analysis of stochastic Hamilton-Jacobi equations comes from the irreversibility of the dynamics. For example,  even for equations driven by $1$-dimensional noise as above, $B(t)=0$ does not imply that $u(\cdot,t)=u(\cdot,0)$, even though this would be the case if the equation was satisfied in the classical sense. Nevertheless, it was  shown in Gassiat, Gess, Lions and Souganidis \cite{GGLS20} that in the case of convex Hamiltonians a certain monotonicity of $u$ with respect to the driving signal allows the proof of cancellations at the level of the PDE solution $u$; see also, Hoel, Karlsen, Risebro, Storrosten \cite{HKRS20,HKRS18}. Repeated applications of this observation, combined with properties of the deterministic equations, allow us to obtain the existence of  the stationary solutions $\psi$, and to prove that all solutions are close to them for large times.

\subsubsection*{Spatially homogeneous noise and $d=1$}

We prove new quantitative decay estimates and large time convergence for SPDEs of the form
\begin{equation}
du= \partial_{x}(F(\partial_{x}u)) dt +H(\partial_{x}u)\circ{dB}\ \ \text{on } \ \ \mathbb{T}\times\R_{+}. \label{eq:intro-1d-homogeneous}
\end{equation}
For the precise assumptions on $F,H$ see Section \ref{sec:1d_homogeneous} below. 
\vs 
The proof relies on a novel combination of entropy inequalities with the nonlinearity of the Hamiltonian $H$ to deduce quantitative decay estimates for the derivative $\partial_{x}u$. As a first main result, in Section \ref{sec:homogeneous}, we derive quantitative estimates relying purely on the stochastic part in \eqref{eq:intro-1d-homogeneous}, thus giving estimates uniform  in $F$.

\begin{theorem}[See Theorem \ref{thm:quantifiedbetter} below] \label{intro-thm:quantifiedbetter} 
Assume that
$H=H(x,p)$  grows super-linearly in $p$ with rate $q\geq 1$, and is the difference of two convex functions $H_1,H_2$, with $H  \geq \alpha H_1$ for some $\alpha \in (0,1]$, and $F\in C^1(\R)$ is non-decreasing. Then, for almost every Brownian path, there exist positive constants $c$ and $C(B)$ depending only on $q$ and $\alpha$ such that, if $q=1$, then the solutions $u$ to \eqref{eq:intro-1d-homogeneous} satisfy
\begin{equation} %\label{eq:oscuT1}
\|u_x(\cdot,T)\|_{L^1} \leq C(B) (e^{-c T} \osc(u_0) +  1),
\end{equation}
and, if $q>1$, 
\begin{equation}
\|u_x(\cdot, T)\|_{L^q}  \leq C(B)  \left(  T^{-\frac{1}{2(q-1)}}  + 1 \right) .
\end{equation}
\end{theorem}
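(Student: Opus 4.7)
The plan is to exploit the convex lower bound $H \geq \alpha H_1$ to derive a stochastic Oleinik-type one-sided bound on $(H_1'(u_x))_x$, and then, using the periodicity of the torus, to convert this one-sided bound into an $L^q$ estimate for $v := u_x$. Crucially, the bound must be uniform in the (merely non-decreasing, possibly degenerate) $F$, so the smoothing is expected to come entirely from the stochastic Hamiltonian part. This is consistent with the stated rate $T^{-1/(2(q-1))}$: a deterministic Oleinik bound of order $t^{-1/(q-1)}$ for $u_t = H_1(u_x)$ on $\T$ is transported to the stochastic setting by replacing $t$ with a Brownian functional of order $\sqrt{T}$.

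The heart of the argument is a stochastic Oleinik bound of the form
\[
(H_1'(u_x))_x(\cdot, T) \;\leq\; \frac{C(q,\alpha)}{\Phi(B;0,T)}
\]
in the viscosity sense, for a suitable Brownian functional $\Phi(B;0,T)$ growing almost surely like $\sqrt{T}$. To obtain this, I would approximate $B$ by smooth paths $B^\ep \to B$ and analyze the deterministic viscous PDE $u^\ep_t = \partial_x F(u^\ep_x) + H(u^\ep_x)\dot B^\ep$ via a maximum-principle computation for $w^\ep := (H_1'(u^\ep_x))_x$. On intervals where $\dot B^\ep < 0$ the dynamics is Oleinik-contractive for the convex $H_1$ (via $H \geq \alpha H_1$); on intervals where $\dot B^\ep > 0$ the potential growth of $w^\ep$ is controlled by the Stratonovich correction; and the parabolic $F$-term, being monotone in $u_{xx}$, respects the one-sided bound. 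The passage to the rough limit $\ep \downarrow 0$ relies on the monotonicity of the stochastic Hamilton-Jacobi semigroup in the driving signal (as in \cite{GGLS20}), which also allows one to identify $\Phi(B;0,T)$ as an appropriate range-type functional of $B$.

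Given the one-sided bound, the conversion to an $L^q$ estimate for $q>1$ is almost immediate. Since $u(\cdot, T)$ is $\T$-periodic, $v$ has mean zero and in particular vanishes at some point, whence $H_1'(v)$ attains the value $H_1'(0)$. Combining this with $\oint_\T (H_1'(v))_x\,dx = 0$ and the one-sided bound gives
\[
\mathrm{TV}(H_1'(v)) \;\leq\; \frac{2C|\T|}{\Phi(B;0,T)}, \qquad \|H_1'(v)\|_\infty \;\leq\; |H_1'(0)| + \frac{C}{\Phi(B;0,T)}.
\]
Inverting through the rate-$q$ superlinearity $|H_1'(p)| \gtrsim |p|^{q-1}$ yields $\|v\|_\infty \lesssim \Phi(B;0,T)^{-1/(q-1)} + 1$, and the announced $L^q$-bound follows upon invoking a lower bound $\Phi(B;0,T) \geq c(B)\sqrt{T}$ valid almost surely for $T$ large, with the random constant $c(B)^{-1/(q-1)}$ absorbed into $C(B)$.

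For $q=1$ the above inversion degenerates and one needs an iterative contraction argument: the deterministic semigroup $S_{H_1}(\tau_0)$, restricted to periodic mean-zero data of bounded oscillation, is a strict $L^1$-contraction with ratio $\rho < 1$ for some fixed $\tau_0 = \tau_0(q,\alpha)$, as a consequence of the strict convexity and superlinearity of $H_1$. Using the i.i.d.\ structure of Brownian increments on unit-time intervals, one identifies $\Omega(T)$ disjoint sub-intervals of $[0,T]$ on which $B$ completes a favorable cycle of length $\ge \tau_0$; concatenating the corresponding contractions yields the exponential estimate with random prefactor $C(B)$. \emph{The main obstacle throughout is the stochastic Oleinik bound of Step 1:} establishing it uniformly in a possibly highly degenerate $F$ and in the presence of the non-convex $H = H_1 - H_2$ requires a careful combination of the monotonicity of solutions in the driving signal with the signed Stratonovich expansion.
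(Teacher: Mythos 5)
Your proposal is genuinely different from the paper's and, as written, has a gap at its very heart. You want a one-sided stochastic Oleinik bound $(H_1'(u_x))_x(\cdot,T) \leq C/\Phi(B;0,T)$ and then convert it to an $L^q$ (in fact $L^\infty$) estimate; the paper instead works with the integral entropy quantities $N_i(t):=\int_\T H_i(u_x(\cdot,t))\,dx$, which are non-increasing in $t$ for each convex $H_i$ separately, and exploits the mean-evolution identity $\int_\T u(\cdot,t)-\int_\T u(\cdot,s) = N(t)(\xi(t)-\xi(s))-\int_s^t(\xi-\xi(s))\,dN$ with $N=N_1-N_2$ to force $N_1(T)$ to be small after the path accumulates enough oscillation (Theorem \ref{thm:quantified_difference_convex}), and then iterates this over stopping times defined by fixed oscillation increments of the path. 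The reason these routes are not interchangeable is that your Step~1 does not survive the hypotheses. The theorem does not assume $H$ is convex, only that $H=H_1-H_2$ with $H_1,H_2$ convex and $H\geq\alpha H_1$; the latter is an inequality between values, not between second derivatives, so the maximum-principle computation for $w^\ep=(H_1'(u^\ep_x))_x$ picks up an uncontrolled contribution from $-H_2''$. Furthermore, on intervals where $\dot B^\ep<0$ the effective drift is $-H$, whose Oleinik structure has the opposite sign, so concatenating across sign changes of $\dot B^\ep$ degrades rather than propagates the one-sided bound; saying this is ``controlled by the Stratonovich correction'' is not an argument. The monotonicity-in-the-signal results of \cite{GGLS20} you invoke to pass to the rough limit also require $H$ convex (see \eqref{takis90}), which is not assumed here. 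By contrast, the entropy quantities $N_i$ are insensitive to the sign of $\dot\xi$ because $\frac{d}{dt}N_i = -\int_\T H_i''\,F'\,(u_{xx})^2\leq 0$ holds unconditionally: after integrating by parts, the noise term vanishes identically. It is telling that the paper does prove an Oleinik-type $L^\infty$ bound, but only for the quadratic Hamiltonian and only under the extra hypothesis $g''\leq C_g<1/2$ on the diffusion, via explicit Bessel-process reflection (Proposition \ref{prop:quadratic_H}); your bound would be a strict generalization of that special case and would give an $L^\infty$ conclusion much stronger than the stated $L^q$ one, which is a good sign it is not true in the generality you need.

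Your $q=1$ argument has a separate problem: you assume $S_{H_1}(\tau_0)$ is a strict $L^1$-contraction on mean-zero data of bounded oscillation ``as a consequence of the strict convexity and superlinearity of $H_1$'', but $q=1$ is precisely the case of linear growth, where $H_1$ need not be strictly convex (e.g.\ $H_1(p)=|p|$), so no such strict contraction is available from the Lax--Oleinik formula. The paper handles $q=1$ by iterating Theorem \ref{thm:quantified_difference_convex} along stopping times $\tau_k^C$ at which the driving path accrues a fixed amount of oscillation, using only the growth bound $H_1(p)\geq C_1(|p|-C_2)$ to obtain a geometric contraction $\osc(u(\tau_{k+1}))\leq\frac12\osc(u(\tau_k))+C_2$, and then Lemma \ref{lem:lln} (law of large numbers for Brownian oscillation cycles) to turn the number of such cycles into the factor $e^{-cT}$.
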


Note, that, if $q>1$, the bound on $ \|u_x(\cdot, T)\|_{L^q}$  is uniform with respect to the initial condition.

\vs
In Section \ref{sec:dissipation}, we derive estimates that in contrast rely entirely  on the dissipation $F$, and are uniform with respect to the noise $H$. For the sake of brevity, we discuss here only the results in the third case analyzed in Section \ref{sec:interplay}, where the  interplay of diffusion and noise is exploited to derive improved decay estimates. 
\vs
We consider  the stochastic mean curvature equation for a one-dimensional graph
\begin{equation}
du= \frac{\partial_{xx} u}{\sqrt{1+|\partial_x u|^{2}}}dt+\sqrt{1+|\partial_x u|^{2}}\circ dB \ \ \text{in } \ \ \mathbb{T}\times\R_{+}.\label{eq:intro-smcf}
\end{equation}
The analysis of the long-time behavior of solutions to \eqref{eq:intro-smcf} is challenging in view of the degeneracy of the ellipticity of the mean-curvature operator for large values of the gradient  $\partial_x u$.  Indeed, it was shown in Colding and Minicozzi~\cite{CM04} that in the deterministic setting, that is, for 
\[u_t= \frac{\partial_{xx} u}{\sqrt{1+|\partial_x u|^{2}}},\]  this degeneracy leads to limitations on the speed of convergence of solutions. More precisely, as described in Remark \ref{rmk:mcf_limit}, it is possible to find  solutions $u^R$ with $\osc(u^R(\cdot, 0))=R$ and
\begin{equation}\label{eq:intro-CM}
  \liminf_{R\to\infty}\osc(u^R(\cdot,R))\ge 1.
\end{equation}
\vs

In contrast, we prove that the inclusion of noise in the stochastic mean curvature equation has a ``regularization by noise'' effect in the sense that it improves the dependency of the decay of solutions on the initial condition. The heuristic reason for this improvement is that, for large initial gradient $\partial_x u(\cdot, 0)$, the decay of oscillations caused by the mean curvature operator in \eqref{eq:intro-smcf} becomes small, and, hence,  the decay of oscillations due to the stochastic part in \eqref{eq:intro-smcf} dominates. 

\begin{theorem}[See Theorem \ref{takis68} below]
Let $u$ be the solution to \eqref{eq:intro-smcf} driven by  Brownian motion $B$ with initial condition $u_0$, and set  $\tau(u_0,B) := \inf \left\{t \geq 0: \;\; \osc(u(\cdot,t)) \leq 2 \right\}$. 
Then there exists a deterministic constant $c>0$, and, for almost every Brownian path, constants  $C(B), \rho(B) \in \R_+$,   
such that
\begin{equation} \label{eq:intro-tau}
\tau(u_0,B) \le c \log(1+\osc(u_0)) + C(B), 
\end{equation}
and
\begin{equation} \label{eq:intro-uxSmcf}
\|u_x(\cdot,t)\|_{\infty} \leq C(B) \osc(u_0) e^{-c t}  \ \ \text{for all} \ \  t \geq \tau(u_0,B) + \rho(B). 
\end{equation}
\end{theorem}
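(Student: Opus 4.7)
The strategy is to specialize the $q=1$ case of Theorem~\ref{intro-thm:quantifiedbetter} to the one-dimensional stochastic mean curvature equation \eqref{eq:intro-smcf}, and then to couple this with parabolic regularization from the mean curvature operator once the oscillation of the solution has been brought down to order one.

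\textbf{Step 1 (logarithmic control of $\tau$).} Write \eqref{eq:intro-smcf} in the form $du=\partial_x(F(u_x))\,dt+H(u_x)\circ dB$ with $F(p)=\sinh^{-1}(p)$ and $H(p)=\sqrt{1+p^{2}}$. Then $F\in C^{1}(\R)$ is non-decreasing, while $H$ is convex with linear growth ($q=1$), so trivially $H\geq 1\cdot H$. The $q=1$ conclusion of Theorem~\ref{intro-thm:quantifiedbetter} therefore yields a deterministic $c>0$ and a random $C(B)$ with
\[
\|u_{x}(\cdot,T)\|_{L^{1}(\T)}\leq C(B)\bigl(e^{-cT}\osc(u_{0})+1\bigr).
\]
Since $\osc(u(\cdot,T))\leq \|u_{x}(\cdot,T)\|_{L^{1}(\T)}$ on the one-dimensional torus, after a deterministic time of order $c^{-1}\log(1+\osc(u_{0}))$ the oscillation is reduced to an $O(C(B))$ quantity that no longer depends on~$u_{0}$.

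\textbf{Step 2 (reducing $\osc$ to $2$).} Since the $O(C(B))$ bound produced in Step~1 is independent of $u_{0}$, restarting the equation at that moment and either re-applying the same estimate or invoking the dissipation estimates of Section~\ref{sec:dissipation} in the bounded-oscillation regime yields an additional purely random waiting time after which $\osc\leq 2$. Absorbing this time into the additive random constant gives \eqref{eq:intro-tau}.

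\textbf{Step 3 ($L^{\infty}$ decay beyond $\tau+\rho(B)$).} Once $\osc\leq 2$ the solution is confined to a strip of width $2$, but $u_{x}$ need not yet be bounded. A parabolic regularization argument, exploiting that in regions where $|u_{x}|$ is moderate the coefficient $1/\sqrt{1+u_{x}^{2}}$ of $u_{xx}$ in \eqref{eq:intro-smcf} is uniformly positive, produces a random time $\rho(B)$ after which $\|u_{x}(\cdot,\tau+\rho(B))\|_{\infty}$ is finite. From there, the interplay between diffusion and noise developed in Section~\ref{sec:interplay} yields a one-step contraction of $\|u_{x}\|_{\infty}$ with a deterministic exponential rate, which iterates to give \eqref{eq:intro-uxSmcf}; the prefactor $\osc(u_{0})$ is propagated from the already-exponentially-decayed oscillation bound at time $\tau+\rho(B)$.

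\textbf{Main obstacle.} The central difficulty is the transition from the $L^{1}$ bound on $u_{x}$ coming out of Theorem~\ref{intro-thm:quantifiedbetter} to a pointwise $L^{\infty}$ bound, precisely in the regime where the mean curvature operator is nearly degenerate: the diffusive rate $1/(1+u_{x}^{2})$ is small exactly where $u_{x}$ is large. Quantifying $\rho(B)$ thus requires a pathwise excursion argument for $B$ combined with the convexity-based cancellation techniques from~\cite{GGLS20} in order to overcome the degeneracy, and constitutes the technical heart of the proof.
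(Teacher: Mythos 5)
Your Step 1 is essentially the paper's first step (apply the $q=1$ case of Theorem~\ref{thm:quantifiedbetter} to get the logarithmic-in-$\osc(u_0)$ time scale), except that you apply it directly to $H(p)=\sqrt{1+p^2}$, which violates the hypothesis $H(0)=0$ in \eqref{takis40}; the estimate of Theorem~\ref{thm:quantifiedbetter} cannot hold for this $H$ since then $\int_\T H_1(u_x)\,dx\ge 1$ for all times. The fix is the paper's change of unknown $\tilde u=u-B$, which replaces $H$ by $\sqrt{1+p^2}-1\ge |p|-1$ without changing $u_x$; this is minor. The genuine gap is in Steps 2--3, precisely at what you yourself call the ``technical heart'': the passage from an oscillation (or $L^1$ gradient) bound of order one to a pointwise bound $\|u_x\|_\infty\le 2$. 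Your proposed route --- ``parabolic regularization, exploiting that the coefficient is uniformly positive where $|u_x|$ is moderate,'' supplemented by an unspecified excursion argument and the cancellation techniques of \cite{GGLS20} --- does not address the degenerate region where $|u_x|$ is large, which is exactly where the bound must be produced, and no mechanism is given that actually produces it. The paper closes this step with Theorem~\ref{thm:G}: for $G=[(F'H'')^{1/2}]$ one has $\|G(u_x(\cdot,T))\|_\infty^2\le \osc(u(\cdot,0))/\Gamma(B)_{0,T}$, and since for the mean curvature nonlinearity $G$ is bounded with $G(r)\ge r/K^{1/2}$ on $[0,2]$ and $G(r)\ge 2/K^{1/2}$ for $r>2$, once $\osc(u)<4$ and $\Gamma(B)$ has accumulated enough mass (Lemma~\ref{lem:T}, which is what furnishes the random waiting time $\rho(B)$), one gets $\|u_x\|_\infty\le\sqrt{\osc(u)}\le 2$. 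This is an interplay estimate between the noise (through $H''$ and the functional $\Gamma(B)$) and the dissipation (through $F'$), not a parabolic smoothing argument, and without it your Step 3 does not start.

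Two further points. First, you invoke Section~\ref{sec:interplay} only \emph{after} the gradient is finite, for a ``one-step $L^\infty$ contraction with deterministic exponential rate''; no such contraction is established in the paper, nor is it needed: once $\|u_x\|_\infty\le 2$ the equation is uniformly parabolic on the range of $u_x$, and Proposition~\ref{prop:Gronwall} (Poincar\'e plus Gronwall in $L^2$) gives the exponential rate, which is then converted back to $\|u_x\|_\infty$ through the bound $\|u_x(\cdot,t)\|_\infty\le\sqrt{\osc(u(\cdot,t-o(t)))}$ coming again from Theorem~\ref{thm:G}. Second, your Step 2 is not sound as stated: re-applying the $q=1$ estimate after restarting does not lower the oscillation below its additive floor, and the dissipation estimates of Section~\ref{sec:dissipation} for this degenerate $F$ require entropy control ($L^2$ or higher integrability of $u_x$) that you do not have at the restart time. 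In the paper the threshold $\osc\le 2$ (and hence \eqref{eq:intro-tau} itself) is obtained only after the gradient bound and the uniform-parabolicity step yield genuine exponential decay of the oscillation to zero, so the order of your steps cannot be maintained as written.
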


In particular, these estimates provide a quantified and path-by-path improvement of the qualitative results obtained in Es-Sarhir and von Renesse \cite{ER12}.

\vs 

Notably, \eqref{eq:intro-uxSmcf} implies that eventually the solution regularizes, that is, it  becomes Lipschitz continuous, and, moreover,  
  $$\|u_x(t)\|_{\infty} \le 1 \ \ \text{for all} \ \ t \ge C\log(\osc(u_0)+1)+C(B),$$
a fact that shows that the inclusion of noise improves the relaxation of the initial oscillation from the superlinear time scale found in \eqref{eq:intro-CM}  in the deterministic case to a logarithmic one. 
\vs
A second application and example of increased speed of convergence concerns  stochastic Hamilton-Jacobi equations (sHJ for short)  with polynomial nonlinearities, that is, for $\alpha, \beta > 1$ and $(u_x)^{[\alpha]}:=u_x|u_x|^{\alpha-1},$
\begin{equation}\label{eq:model_2}
       d u = \partial_{x}(u_x)^{[\alpha]}\,dt+ |u_x|^{\beta} \circ dB\ \ \text{in } \ \ \mathbb{T}\times\R_{+}.
\end{equation} 
\vs
In the deterministic version of \eqref{eq:model_2}, that is, for the PDE $u_t= \partial_{x}(u_x)^{[\alpha]}$, the degeneracy of the diffusion at $u_x = 0$ yields that the  decay of $\|u_x(t)\|_\infty$ is of order  $t^{-\frac{1}{\a-1}}$ which is slow for $\alpha$ large--this bound can be easily derived from solutions with separated variables. % solutions. 
\vs

In contrast, it follows from the results obtained in this work that a higher order of decay is caused by the stochastic part if $2\beta< 1+a$. Indeed, in this range, the decay of oscillations due to the stochastic part in \eqref{eq:model_2} dominates the decay caused by the degenerate diffusion. 

\begin{theorem}[See Example \ref{eq:polynomial_SPDE} below]\label{intro-eq:polynomial_SPDE}
For  $\alpha, \beta > 1$, let $u$ be the solution to \eqref{eq:model_2}  with initial condition $u_0$ and  $B$ a Brownian motion. There exists a constant $C(B)\in \R_+$, such that, for all $t>0$, 
     \begin{equation} 
     \|u_x(\cdot,t)\|_{L^\beta} \leq C(B) t^{-\frac{1}{2(\beta-1)}}.
     \end{equation}
     and
     $$ \|u_x(
     \cdot, t) \|_\infty  \le C(B) t^{-\frac{\beta}{2(\beta-1)(\alpha+\beta-1)}}.$$
\end{theorem}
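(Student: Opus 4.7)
The plan is to derive both estimates by combining the general $L^\beta$ bound from Theorem \ref{intro-thm:quantifiedbetter} with the smoothing effect of the nonlinear diffusion $\partial_x(u_x)^{[\alpha]}$.

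First I would verify that $H(p) = |p|^\beta$ satisfies the assumptions of Theorem \ref{intro-thm:quantifiedbetter}: it is convex, vanishes at $0$, grows super-linearly of order $q = \beta > 1$, and, taking $H_1 = H$ and $H_2 \equiv 0$, the condition $H \geq \alpha H_1$ holds with $\alpha = 1$. The drift $F(p) = p^{[\alpha]}$ is $C^1$ with $F'(p) = \alpha |p|^{\alpha-1} \geq 0$. Theorem \ref{intro-thm:quantifiedbetter} then yields $\|u_x(\cdot,t)\|_{L^\beta} \leq C(B)\bigl(t^{-1/(2(\beta-1))} + 1\bigr)$. For the polynomial Hamiltonian the additive constant is removed using the scaling structure of the problem: the parabolic rescaling $(u,x,t)\mapsto(\lambda u,\lambda x,\lambda^{2}t)$ combined with Brownian rescaling allows one to iterate the decay, at the price of enlarging $C(B)$, and absorb the $+1$ into the power-law term.

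For the pointwise bound on $u_x$, the idea is to combine this $L^\beta$ estimate, which comes from the noise, with the smoothing effect of the dissipation. The function $v := u_x$ formally satisfies a signed porous-medium-type equation $v_t = \partial_{xx}(v^{[\alpha]})$ perturbed by spatially homogeneous noise. One restarts the problem at time $t/2$, where the first step yields $\|u_x(\cdot, t/2)\|_{L^\beta} \leq C(B)\,t^{-1/(2(\beta-1))}$, and propagates this control to an $L^\infty$ bound on $[t/2, t]$ using the $L^\beta$-to-$L^\infty$ smoothing inherent to the $p$-Laplace/PME structure. The exponents of that smoothing estimate are fixed by the parabolic scaling of $v_t = \partial_{xx}(v^{[\alpha]})$, and matching them with the $L^\beta$ decay coming from the noise produces the announced rate $\beta/(2(\beta-1)(\alpha+\beta-1))$ after optimizing the time split at $t/2$.

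The main obstacle is to carry out this smoothing rigorously in the presence of the stochastic forcing $|u_x|^\beta \circ dB$, since the noise neither commutes with nor preserves the entropies underlying the $L^\beta$-to-$L^\infty$ regularization. The approach developed in Section \ref{sec:interplay} resolves this by combining entropy inequalities adapted to the degenerate diffusion with the quantitative cancellations produced by the stochastic part, in the same spirit as for the stochastic mean curvature equation treated there. The present statement is then obtained as a direct specialization of those general interplay estimates to the scale-invariant polynomial setting.
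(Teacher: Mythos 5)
Your overall two-step architecture coincides with the paper's: first the noise-driven $L^\beta$ bound from Theorem \ref{thm:quantifiedbetter}, then an upgrade to $L^\infty$ by restarting at an intermediate time and invoking the dissipation. However, both bridging steps as you present them have gaps. For the first step, the removal of the additive constant neither needs nor survives the proposed rescaling argument: the ``$+1$'' in the introductory statement is just the term $C_2^{1/q}$ in \eqref{eq:oscuT2}, and for $H_1(p)=|p|^\beta$ the growth condition \eqref{eq:Hp} holds with $C_1=1$, $C_2=0$, so Theorem \ref{thm:quantifiedbetter} directly gives $\|u_x(\cdot,T)\|_{L^\beta}\le C\,(\osc_{0,T}B)^{-1/(\beta-1)}$ with no constant term. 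By contrast, the parabolic rescaling $(u,x,t)\mapsto(\lambda u,\lambda x,\lambda^2 t)$ is not available here: the equation is posed on the fixed torus $\T$, which is not invariant under $x\mapsto x/\lambda$ (and the constants in Theorem \ref{thm:quantifiedbetter} depend on the period through the H\"older/oscillation step \eqref{eq:Hosc}), while rescaling the driving path only yields equality in law with a different Brownian motion, which is incompatible with the pathwise constant $C(B)$ you are claiming.

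For the second step, the key $L^\beta\to L^\infty$ smoothing in the presence of the noise is asserted rather than proved, and the mechanism you appeal to is not the one that makes it work. The obstacle you describe (that the noise does not preserve the entropies behind PME/$p$-Laplace smoothing) is in fact absent here: since $H(p)=|p|^\beta$ is spatially homogeneous, the noise enters the entropy balance for $v=u_x$ as an exact $x$-derivative and integrates to zero on $\T$, so the entropy--dissipation inequality holds pathwise and uniformly in $B$; this is exactly Proposition \ref{prop:deg_diss}, not a ``specialization of the interplay estimates'' of Section \ref{sec:interplay}, and no stochastic cancellation is needed. The paper applies it on a unit window with $E(w)\sim|w|^\beta$ (matched to the $L^\beta$ information from step one), so that $G(r)=\int_0^r (F'E'')^{1/2}=c\,r^{(\alpha+\beta-1)/2}$ and $G^2(\|u_x(\cdot,T)\|_\infty)\le C\int_\T |u_x(x,T-1)|^\beta\,dx\le C(B)\,T^{-\beta/(2(\beta-1))}$, which yields precisely the exponent $\tfrac{\beta}{2(\beta-1)(\alpha+\beta-1)}$. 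Note that this exponent is dictated by the pairing of $F'$ with the entropy $|w|^\beta$, not by the generic parabolic-scaling $L^\beta\to L^\infty$ smoothing of $v_t=\partial_{xx}(v^{[\alpha]})$; matching the deterministic scaling exponents and optimizing a split at $t/2$, as you propose, would not reproduce the stated rate. Until the smoothing inequality is actually established in the stochastic setting (e.g.\ via Proposition \ref{prop:deg_diss} restarted at $T-1$ or $t/2$), your proof of the $L^\infty$ estimate is incomplete.
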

Note that, when $\beta$ is close to one,  this estimate significantly exceeds the optimal deterministic decay of order  $t^{-\frac{1}{\a-1}}$.

\subsection{Organization of the paper}
The paper is organized as follows.  In section~\ref{qc} we present an argument that yields the convergence to constants for a large class of SPDEs but without a rate of convergence. Section~\ref{sec:inhomogeneous} is devoted to the study of the long-time behavior of solutions to stochastic Hamilton-Jacobi equations with inhomogeneous Hamiltonians that are  convex in the gradient. Section~\ref{sec:1d_homogeneous}, which  is about the long-time behavior of parabolic-hyperbolic SPDEs with spatially homogeneous noise, consists of following three parts: (i)~ convergence due to the stochastic fluctuations, (ii)~convergence due to the dissipation, and (iii)~convergence due to the interaction between the stochastic fluctuations and dissipation. In each subsection, we present several results and examples. In Section~\ref{sec:open}, we list  a number of questions that  left open by the present work and which we believe to be of interest. Finally, in the Appendix we discuss a number of motivating and concrete examples of SPDEs to which our results apply. The appendix can also be considered as an introduction of the scope of a large number of nonlinear SPDEs with multiplicative stochastic dependence arising in concrete applications, and, in addition, provides the motivation for the concrete problems addressed earlier in the paper.

\subsection{Notation} We write $\T^d$ for the $d-$dimensional torus and, when $d=1$,  $\T$ instead of $\T^1$. Throughout the paper $\|w\|_X$ stands for the norm of $w\in X$. When $X=L^\infty$, we simply write $\|w\|$.  We write  $BV$ for the set of functions of bounded variation and  $\text{BUC}(X)$ for the set bounded and uniformly continuous  functions on $X$. The oscillation of a function $w$ is denoted by $osc(w)$. 
For an integrable function $F:\R\to \R$, we set  $[F](v):=\int_{0}^{v}F(r)dr$. Given two random variables $X$ and $Y$, $X\overset{d}=Y$ means that $X$ and $Y$ have the same distribution. Moreover, we write $\mathcal{L}(X \; | \mathcal{W})$ for the law of the random variable conditioned upon the $\sigma-$algebra associated with a Brownian path $W$. 

\subsection{Acknowledgments} BG acknowledges support by the Max Planck Society through the Research Group "Stochastic analysis in the sciences". This work was funded by the Deutsche Forschungsgemeinschaft (DFG, German Research Foundation) - SFB 1283/2 2021 - 317210226. PES  was partially supported by the National Science Foundation grants  DMS-1900599 and DMS-2153822, the Office for Naval Research grant N000141712095 and the Air Force Office for Scientific Research grant FA9550-18-1-0494.

\section{Qualitative convergence for homogeneous equations} \label{qc}

We consider general SPDEs of the type 
\begin{equation} \label{eq:sec4}
du = F(Du,D^2 u) dt  + \sum_{i=1}^m H_i(Du)\circ dB^i  \ \  \mbox{ in } \ \  \T^d \times \R_+,
 \end{equation}
 where $B=(B^1,\ldots, B^m)$ is a $m$-dimensional Brownian motion and 
\begin{equation}\label{takis80}
\begin{split}
&F=F(p,X)  \ \ \text{ is continuous and  non-decreasing in $X$ \  and  \ $F(0,0)=0$,}\\[1.2mm]
& \text{$H_i$ is the difference of two convex functions, \ and \ $H_i(0)=0$ for $i=1,\ldots, m$, }\\[1.2mm]
\end{split}
\end{equation}
and present a simple argument that yields  convergence to constants. The proof  is qualitative and does not give a convergence rate but it can be applied in general settings, and,  in particular, does not impose any restrictions on the dimension.
\vs The results extend to the more general equations
\begin{equation}\label{takis820}
du = F(Du,D^2 u) dt  + \sum_{i=1}^m H_i(Du)\circ d\xi^i  \ \  \mbox{ in } \ \  \T^d \times \R_+,
\end{equation}
where $\xi=(\xi^1,\ldots,\xi^m)$ is any  continuous, stationary, ergodic process as long as its restriction to intervals have full support. This is, for instance, the case of fractional Brownian motion.

\vs 
We recall from the introduction that, for $v\in C(\T^d)$ and $t\geq 0$, $S_{\alpha F + \sum_{i=1}^m \beta^i  H_i}(t)v$ denotes the solution to the deterministic initial value problem 
\[ V_t=\alpha F(DV, D^2V)  + \sum_{i=1}^m \beta^i  H_i(DV) \ \  \mbox{ in } \ \  \T^d \times \R_+, \ \ \ V(\cdot,0)=v,\] 
and we  assume that 
\begin{equation} \label{eq:asn4}
\begin{split}
&\text{there exist $\alpha_n, \beta^1_n, \ldots, \beta^m_n$ such that, for all $v \in C(\T^d)$,}\\[1.2mm]
 &\quad \lim_{n \to \infty}  S_{\alpha_n F + \sum_{i=1}^m \beta^i_n  H_i}(1)(v) = \mbox{constant}.
 \end{split}
 \end{equation}

The qualitative long-time result is stated next. 

\begin{theorem}\label{takis81}
Assume \eqref{takis80} and  \eqref{eq:asn4} and  $B=B(\omega)$ is a $m$-dimensional Brownian motion sample path. Let  $u \in C(\T^d \times [0,\infty))$ be a solution to \eqref{eq:sec4}. Then, almost surely in $B$, $u(\cdot,t)$ converges,  as $t \to \infty$ and uniformly in $\T^d$,  to a constant.
\end{theorem}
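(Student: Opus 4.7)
The strategy is to show that $\osc(u(\cdot,t)) \to 0$ as $t \to \infty$ almost surely, from which uniform convergence to a constant follows. Since $F(0,0)=0$ and $H_i(0)=0$, constants solve \eqref{eq:sec4}, so the comparison principle implies that $t\mapsto \max u(\cdot,t)$ is non-increasing and $t\mapsto \min u(\cdot,t)$ is non-decreasing; both admit finite limits $M_\infty\ge m_\infty$, hence $\ell := M_\infty-m_\infty = \lim_{t\to\infty}\osc(u(\cdot,t))$ exists, and it suffices to prove $\ell=0$. Comparing $u_0$ with its spatial translates yields a modulus of continuity for $u(\cdot,t)$ that is uniform in $t$, so the shifted family $\{u(\cdot,t)-\min u(\cdot,t):t\ge 0\}$ is relatively compact in $C(\T^d)$; its $\omega$-limit set $\Omega$ is non-empty and every $w\in\Omega$ satisfies $\osc(w)=\ell$ by continuity of $\osc$.

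Assume $\ell>0$ for contradiction and pick $w\in\Omega$. By \eqref{eq:asn4}, one can choose $n_0$ (with $\alpha_{n_0}>0$; the degenerate case $\alpha_{n_0}=0$ is handled by a small perturbation of $\alpha_{n_0}$) such that $\osc\bigl(S_{\alpha_{n_0}F+\sum_i\beta_{n_0}^i H_i}(1)(w)\bigr)<\ell/8$. The sup-norm contraction of the deterministic semigroup then supplies $\eta>0$ with
\[
\osc\bigl(S_{\alpha_{n_0}F+\sum_i\beta_{n_0}^iH_i}(1)(v)\bigr)<\ell/4 \quad \text{whenever } \|v-w\|_\infty<\eta.
\]
By the Lions--Souganidis pathwise (Wong--Zakai) stability for stochastic viscosity solutions, there exists $\delta>0$ such that on any interval $[s,s+\alpha_{n_0}]$ on which $B^i(s+\cdot)-B^i(s)$ is within $\delta$ in sup-norm of the linear path $t\mapsto(\beta_{n_0}^i/\alpha_{n_0})t$ for every $i$, the value $u(\cdot,s+\alpha_{n_0})$ is within $\ell/16$ in sup-norm of the deterministic solution at time $\alpha_{n_0}$ of $V_t=F(DV,D^2V)+\sum_i(\beta_{n_0}^i/\alpha_{n_0})H_i(DV)$ started from $u(\cdot,s)$; a time rescaling identifies this deterministic value with $S_{\alpha_{n_0}F+\sum_i\beta_{n_0}^iH_i}(1)(u(\cdot,s))$.

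To realize such an excursion at times when $u(\cdot,s)$ is close to $w$ modulo constants, pick stopping times $t_k\to\infty$ with $t_{k+1}>t_k+\alpha_{n_0}$ and $u(\cdot,t_k)-\min u(\cdot,t_k)\to w$ in $C(\T^d)$, which is possible since $w\in\Omega$. Define
\[
E_k := \Bigl\{\sup_{t\in[0,\alpha_{n_0}]}\bigl|B^i(t_k+t)-B^i(t_k)-(\beta_{n_0}^i/\alpha_{n_0})t\bigr|<\delta \text{ for every } i\Bigr\}.
\]
By the strong Markov property, $\P(E_k\mid\F_{t_k})=p>0$ is a deterministic constant, and $E_k\in\F_{t_{k+1}}$, so L\'evy's conditional Borel--Cantelli lemma yields that almost surely infinitely many $E_k$ occur. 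On any such $E_k$ for $k$ so large that $\|u(\cdot,t_k)-\min u(\cdot,t_k)-w\|_\infty<\eta$, the invariance of the deterministic semigroup and of $\osc$ under addition of constants, combined with the two estimates above, gives $\osc(u(\cdot,t_k+\alpha_{n_0}))<\ell/4+\ell/16<\ell/2$, contradicting the fact that $\osc(u(\cdot,t))$ is non-increasing with limit $\ell>0$. The main technical ingredient is the Wong--Zakai pathwise stability of stochastic viscosity solutions; being pathwise, it applies directly on the random interval $[t_k,t_k+\alpha_{n_0}]$, while conditioning on $\F_{t_k}$ is used only to evaluate the conditional probabilities that feed the Borel--Cantelli step.
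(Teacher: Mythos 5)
Your proof assembles exactly the right ingredients --- monotonicity of the oscillation via comparison with constants, uniform equicontinuity via comparison with spatial translates, compactness, continuity of the solution map with respect to the driving path, and full support of Brownian motion --- but there is a genuine gap in the Borel--Cantelli step. You write ``pick stopping times $t_k \to \infty$ with $u(\cdot,t_k) - \min u(\cdot,t_k) \to w$'' and then invoke the strong Markov property to compute $\P(E_k \mid \F_{t_k})$. However, the $\omega$-limit point $w$ is a tail random variable: it depends on the entire Brownian path, not on any finite piece of it, and the same is true of $\ell$, of the index $n_0$, and of $\eta$. A random time defined by the requirement that $u(\cdot,t)$ approach a tail-measurable target $w$ cannot be a stopping time (one must look into the infinite future to know what $w$ is), so the strong Markov property does not apply and $\P(E_k\mid\F_{t_k})$ is not controlled. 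Worse, the needed independence is precisely the thing that may fail: on $\{\ell>0\}$ the Brownian path must have avoided the excursion $E_k$ at every targeted time $t_k$, which is exactly the adverse correlation between the definition of $t_k$ and the event $E_k$ that the conditional Borel--Cantelli argument is meant to rule out but, as set up, cannot.

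The paper circumvents this entirely by working with the \emph{deterministic} compact set $\mathcal{K}(u_0)$ of all states reachable from $u_0$ under some continuous driving path. Because $\mathcal{K}(u_0)$ is compact and does not depend on the Brownian realization, the convergence in \eqref{eq:asn4} is uniform over $v \in \mathcal{K}(u_0)$ (this is \eqref{eq:convK}), so \emph{any} random time $T$ at which the increments of $B$ stay uniformly close to the linear path $t\mapsto\beta_n t$ forces $\osc(u(\cdot,T+\alpha_n))$ to be small, regardless of what $u(\cdot,T)$ happens to be. This removes the need to aim at a specific state $w$, and the existence of such a $T$ follows from independence and stationarity of Brownian increments over the deterministic intervals $[k\alpha_n,(k+1)\alpha_n]$, with no stopping times or conditional Borel--Cantelli needed. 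To repair your argument you would either import the uniform estimate \eqref{eq:convK} and drop the target $w$ altogether (which collapses your proof to the paper's), or deterministically discretize both the threshold $\ell$ (restricting to $\{\ell>1/m\}$ for fixed $m$) and the target (using a finite deterministic $\eta$-net of $\mathcal{K}(u_0)$); either way the essential point you are missing is that the compactness, and hence the quantitative decay estimate it yields, is deterministic.
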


Before we proceed with the proof, a number of remarks are in order. 
\vs

As far as \eqref{eq:asn4} is concerned, typically,  it can be  assumed that either  $\alpha_n=n$ and $\beta^i_n \equiv 0$  or $\alpha_n =0$ and  $\beta^i_n=n$, that  is, that solutions to deterministic equations corresponding to either $F$ or one of the $H_i$ converge for large times  to a constant. 
\vs

Examples of the former are 
$ F=F(p) \geq 0 \mbox{ with equality if and only if  } p=0,$ and 
$F(p,A) = Tr( a(p) A ) \mbox{ with }  a(\cdot) $ taking values in the set of strictly positive symmetric matrices.
Hence, our results apply to the case 
\[ F(p,A)  =  Tr\left( A  \left(I_d - \frac{p \otimes p}{1+|p|^2}\right)\right) \ \ \text{and} \ \ H(p)= \sqrt{1 +|p|^2}, \]
corresponding to the stochastic mean-curvature flow of a graph, where $I_d$ is the identity matrix,  We therefore recover in a simpler way, but in a different topology,  the result of \cite{DHR21} that the graph becomes asymptotically constant for large time. 
\vs

It is also possible to find examples where $S_F$ and the $S_{H_i}$ taken separately do not converge. Indeed, for instance take $d=m +1 \geq 2$, $F(p) = |p_1|$, $H_i(p)=|p_{i+1}|$. On the other hand,  \eqref{eq:asn4} holds for $\alpha_n=\beta^1_n=\ldots=\beta^d_n=n$.  
\vs

The idea of the proof is classical. It  combines the compactness property in the space variable, the continuous dependence of the solutions on the paths  and the fact  that, in large time intervals, the noise is small. For  additive noise dependence, the problem was investigated by 
Dirr and Souganidis \cite{Di.So2005}.
\vs

Increasing the regularity of the path $\xi$ in \eqref{takis820}, it is possible to weaken  the regularity requirement on $H$. Indeed, when $\xi$ is a   Brownian motion, its trajectories take value in $C^{0,\alpha}([0,T])$ for any $\alpha < \frac 1 2$, and the support of its law in this space still contains all smooth functions. It follows, for example,  from the results of Lions, Seeger and Souganidis \cite{LSS20} that the map $\xi \in C^{0,\alpha} \mapsto u$ is continuous if, for some $\varepsilon>0$,  $H \in C^{1,\varepsilon}$.
\vs
In principle, the proof below is flexible and could extend to $x$-dependent equations as long as the solutions are continuous  with respect to  the driving signal and  equicontinuous  in the space variable. The latter  is delicate and, at the moment, is only known for $x$-dependent equations in the case where $H=H(x,Du)$ is convex and $F\equiv 0$; see \cite{GGLS20}. However, in that case, we have a more precise description of the long-time dynamics which are discussed  in section \ref{sec:inhomogeneous} below.

\begin{proof}[The proof of Theorem~\ref{takis81}]
Throughout the argument, without loss of generality, we assume that $\alpha_n > 0$.
\vs
It follows from the contraction property of the solution operator of \eqref{takis820} with $\xi=(\xi^1,\ldots, \xi^m)$ an arbitrary continuous path (see \cite{So.19}) that the   the map $t\to \osc(u(\cdot,t))$ is non-increasing. 
\vs
Thus to prove the result,  it suffices to show that, for some subsequence $t_n\to \infty$, $osc(u(\cdot,t_n)) \to 0$.
\vs
It follows again from the contraction property of the solutions of \eqref{takis820}, the continuity with respect to the paths (see \cite{So.19})  and the compactness of $\T^d$ that, given $u_0 = u(\cdot,0)$ and $t\geq 0$, the set  
\[ \mathcal{K}(u_0) = \left\{ v(\cdot,t) : \;  \mbox{$v$ solves }\eqref{eq:sec4} \mbox{ for some path } B \in C([0,t], \R^d) \right\} \]
is compact in $C(\T^d)$.
\vs
 Indeed, the comparison principle and the homogeneity of  \eqref{eq:sec4} give that the equation preserves upper and lower bounds and modulus of continuity by a standard comparison argument.  For instance, for the latter point, note that, for all $y\in \R^d$,  if $v$ is a solution to \eqref{eq:sec4}, so does $v(\cdot+y,t)$ and, hence, the comparison principle (see \cite{So.19})  yields 
 \[  \sup_{t \geq 0, x \in \T^d}  \{v(x+y,t) - v(x,t)\}  \leq \sup_{x \in \T^d} \{u_0(x+y) - u_0(x)\}. \]
\vs
The  compactness implies that the convergence in \eqref{eq:asn4} is uniform over $v \in \mathcal{K}(u_0)$, that is,  
\begin{equation} \label{eq:convK}
 \lim_{n \to \infty}  \sup_{v \in \mathcal{K}(u_0)} \osc \left( S_{\alpha_n F + \sum_i \beta^i_n H_i}(1)(v)\right) =  0,
 \end{equation}
 
{ 
Recall that, for any $\alpha>0$, the law of Brownian motion has full support on $C([0,\alpha])$, namely for any continuous function $f : [0,\alpha] \to \R$ with $f(0)=0$, and any $\varepsilon > 0$, it holds that
\[ \P \left( \forall t \in [0,\alpha], \;  \left|  B(t) - f(t) \right|  \leq \varepsilon \right) := p_{\alpha,f ,\varepsilon} > 0. \]
By independence and stationarity of increments, this further implies that, 
\begin{align*}
&\P \left( \exists T \geq 0 :  \forall t \in [0,\alpha], \;  \left|  B(t+T) - B(T)- f(t) \right|  \leq \varepsilon \right) \\
\geq&\; \sup_{N\geq 1} \P \left( \exists 0 \leq k \leq N :  \forall t \in [0,\alpha], \;  \left|  B(t+k \alpha) - B(k \alpha)- f(t) \right|  \leq \varepsilon \right) \\
\geq &\sup_{N \geq 1} \left\{ 1 - (1- p_{\alpha,f ,\varepsilon} )^N\right\} \\
= & 1. 
\end{align*}

Letting now $\varepsilon_n \to 0$ be arbitrary, this implies that, 
almost surely, there exists $T_n$ such that
\[ \sup \left\{  \sum_{i=1}^d\left| B^i(t+T_n) - B^i(T_n) - \beta^i_n t \right|, \;\;\; t \in [0,\alpha_n ] \right\} \leq \varepsilon_n. \]
}
Moreover, in view of the  continuity of the solution map with respect to  $B$ and the fact that $u(\cdot,t)$ is in $\mathcal{K}(u_0)$, we find  that, for some  $\varepsilon'_n \to 0$ depending  on $\varepsilon_n$ and $\mathcal{K}(u_0)$, 
\[ \left\| S_{\alpha_n F + \beta_n H}(1)(u(\cdot,T_n)) - u(\cdot,T_n+ \alpha_n)  \right\|_{\infty} \leq \varepsilon'_n .\]

The assertion above and  \eqref{eq:convK} imply that $ osc(u(\cdot,T_n+\alpha_n)) \to 0$, and, hence, the result.
\end{proof}

\section{Inhomogeneous and convex Hamiltonians}\label{sec:inhomogeneous}

We investigate  here the long time behavior of $\T^d-$periodic solutions to the inhomogeneous Hamilton-Jacobi equation 
\begin{equation} \label{eq:HJstoc}
du + H(x, Du)\circ d\xi = 0 \ \ \text{in} \ \ \T^d\times \R_+ \quad  u(0,\cdot) = u_0 \ \ \text{on} \ \ \T^d,
\end{equation}
with $\xi\in C([0,\infty))$, a special case  being the stochastic Hamilton-Jacobi equation
\begin{equation} \label{eq:HJstoc1}
du + H(x, Du) \circ dB = 0 \ \ \text{in} \ \ \T^d\times \R_+ \quad  u(0,\cdot) = u_0 \ \ \text{on} \ \ \T^d.
\end{equation}

\vs
We denote by $S_H$ the solution operator (semigroup)  associated to the deterministic evolution $v_t + H(x,Dv) = 0$, and, for each $\xi \in C^1([0,\infty))$ and each $v \in C(\T^d)$,  $\S_H^{\xi,[0,t]})(v)$ is the value at time $t$ of the solution to $u_t + H(x,Du) \circ d{\xi}= 0$ and $u(\cdot, 0) = v$.
\vs

Throughout the section we make two assumptions which we state next. Their role was already mentioned in the discussion before the proof of Theorem~\ref{takis81}. 
\vs
We assume that 
\begin{equation}\label{takis90}
H=H(x,p) \mbox{ is continuous on }\T^d \times \R^d \mbox{ and convex in the $p$ variable}, 
\end{equation}
and
\begin{equation}\label{takis91}
\begin{split}
&\mbox{for any compact }K \subset C(\T^d) \ \text{and} \ T>0, \ \mbox{ the family }\\
&\left\{ \S^{\xi,[0,T]}_{H} u: \;u \in K, \; \xi \in C^1([0,T]) \right\} \mbox{ is equicontinuous.} 
\end{split}
\end{equation}

We remark that sufficient conditions for \eqref{takis91} to hold are given Theorem~A.1 in Seeger \cite{Seeger20} and Proposition~2.5 in \cite {GGLS20}. Note that, in the  periodic setting of this paper, it is true that  $H(x,p) \leq \bar{H}(p)$ for a convex $\bar{H}$, and, thus, (2.8) in \cite{GGLS20} is always satisfied. Then for \eqref{takis91} to hold requires additional information like  local controllability in the control problem associated to $H$. Coercivity of $H$ is, for example,  sufficient  but not necessary.

\vs
When $H$ is convex in the gradient, the control representation of $S_{\pm H}$ yields  a useful monotonicity lemma which is stated next. For the proof we refer to \cite{GGLS20} (see also \cite{BCJS99}). 
% \cite{BCJS99}). %,  where, however,  the notation for $S_H$ there is reversed compared to that in the current paper) %\cite{BCJS99}, \cite{HKRS} :

\begin{lemma} \label{lem:mon}
Assume  \eqref{takis90}. Then,  for all $t \geq 0$, \ \  $S_{-H}(t) S_H(t) \leq Id \leq S_{H}(t) S_{-H}(t).$
\end{lemma}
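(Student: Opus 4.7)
The plan is to deduce both inequalities from Lax--Oleinik type representations of the two semigroups, which are available thanks to the convexity of $H$ in $p$. With $L(x,q) = \sup_{p \in \R^d}(p \cdot q - H(x,p))$ denoting the Fenchel conjugate of $H$ in the $p$-variable, the classical control-theoretic representation of the viscosity semigroup of a convex Hamilton--Jacobi equation gives
\[ S_H(t)v(x) = \inf\Big\{ v(\gamma(0)) + \int_0^t L(\gamma(\tau), \dot\gamma(\tau))\, d\tau : \gamma \in W^{1,\infty}([0,t];\T^d),\; \gamma(t) = x \Big\}, \]
while applying the same formula to $V := -u$ (which, if $u$ solves $u_t - H(x, Du) = 0$, satisfies $V_t + \hat H(x, DV) = 0$ with the convex Hamiltonian $\hat H(x,p) := H(x,-p)$) and then reversing time on the minimising path produces the dual formula
\[ S_{-H}(t)v(x) = \sup\Big\{ v(\gamma(t)) - \int_0^t L(\gamma(\tau), \dot\gamma(\tau))\, d\tau : \gamma \in W^{1,\infty}([0,t];\T^d),\; \gamma(0) = x \Big\}. \]
My first step would be to record these two formulas in the form established in \cite{BCJS99} and used in \cite{GGLS20}.

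Once the representations are in hand, both bounds reduce to a single one-line duality argument: the same path can be used as a competitor on both layers of the nested optimisation. For $S_{-H}(t) S_H(t) v \leq v$, I would fix $x \in \T^d$ and an arbitrary admissible $\gamma$ with $\gamma(0) = x$; using $\gamma$ itself as a competitor in the infimum defining $S_H(t)v(\gamma(t))$ gives
\[ S_H(t)v(\gamma(t)) \leq v(\gamma(0)) + \int_0^t L(\gamma, \dot\gamma)\, d\tau = v(x) + \int_0^t L(\gamma, \dot\gamma)\, d\tau, \]
so $S_H(t)v(\gamma(t)) - \int_0^t L(\gamma, \dot\gamma)\,d\tau \leq v(x)$; taking the supremum over such $\gamma$ on the left-hand side and invoking the dual formula for $S_{-H}$ yields the claim. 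The inequality $v \leq S_H(t) S_{-H}(t) v$ is the mirror image: for any $\eta$ with $\eta(t) = x$, using $\eta$ in the supremum defining $S_{-H}(t)v(\eta(0))$ gives $S_{-H}(t)v(\eta(0)) \geq v(x) - \int_0^t L(\eta, \dot\eta)\, d\tau$, and taking the infimum over $\eta$ in the outer formula for $S_H(t)(S_{-H}(t) v)(x)$ concludes.

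The main obstacle is justifying the two representation formulas under the mere hypothesis \eqref{takis90} of continuity and convexity of $H$ in $p$: without coercivity of $H$ the Lagrangian $L$ may take the value $+\infty$ in some directions, so care is needed to delimit the admissible class of paths. This would be handled by approximating $H$ from above by a family of coercive convex Hamiltonians $H_\varepsilon \searrow H$, applying the lemma to each $H_\varepsilon$ via the clean duality above, and then passing to the limit using the stability of viscosity solutions under uniform convergence of the Hamiltonian. Once this approximation step is settled, the one-line duality argument delivers both inequalities in the limit.
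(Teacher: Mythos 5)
Your proof is correct and takes essentially the same route as the paper, which defers to \cite{GGLS20} and \cite{BCJS99} for exactly this control-representation argument: write $S_H$ and $S_{-H}$ as nested inf/sup Lax--Oleinik formulas with the same Lagrangian $L$ and observe that any competitor path for the outer optimization is simultaneously admissible for the inner one, yielding both inequalities in a single line. Your remark about approximating a non-coercive $H$ from above by coercive convex Hamiltonians and passing to the limit by stability of viscosity solutions is the appropriate way to justify the representation formulas under the bare hypothesis \eqref{takis90}, and is the standard device used in the cited sources.
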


We further recall that using the lemma above repeatedly allows to prove some monotonicity properties for the solution map, which in particular,  in combination with the compactness assumption \eqref{takis91}, imply that the solution map may be extended to arbitrary continuous $\xi$. 
In \cite{GGLS20}, these results are stated in Corollary~2.4, Corollary~2.8 and  Corollary~2.7, which, for the convenience of the reader, we summarize in the following lemma and proposition. 

\begin{lemma}\label{takis92}[Corollaries 2.4 and Corollary~2.8 in \cite{GGLS20}] 
\label{lem:inf} Assume \eqref{takis90}. Then, for all   $\xi, \zeta \in C^1([0,T])$ such that  $\xi(0)=\zeta(0)$,  $\xi(T)=\zeta(T)$ and $\xi  \leq \zeta$ on $[0,T]$, 
\[ \S_{H}^{\xi;[0,T]} \geq \S_H^{\zeta;[0,T]},
\]
and, for any $\xi \in C^1 ([0,T])$ with $\xi(0)=0$,
\[
 S_{H} \left(\xi(T) - \min_{[0,T]} \xi \right) S_{-H} \left(-\min_{[0,T]} \xi \right) \geq  \S_{H}^{\xi;[0,T]} \geq S_{-H} \left( \max_{[0,T]} \xi - \xi(T)\right) S_{H} \left(\max_{[0,T]} \xi \right).
\]
If $\xi$ is such that $\xi(0) = \inf_{[0,T]} \xi$ and $\xi(T) = \sup_{[0,T]} \xi$, then $ \S_{H}^{\xi;[0,T]} = S_H(\xi(T)-\xi(0))$. Similarly, if $\xi(0) = \sup_{[0,T]} \xi$ and $\xi(T) = \inf_{[0,T]} \xi$, then $ \S_{H}^{\xi;[0,T]} = S_{-H}(-\xi(T)+\xi(0))$.
\end{lemma}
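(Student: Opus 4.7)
The plan is to first reduce to the case of piecewise monotone drivers, for which the solution operator factors as a finite composition of the deterministic semigroups $S_H$ and $S_{-H}$, and then deduce each of the three assertions from elementary operator manipulations based on Lemma~\ref{lem:mon}. Approximating a general $\xi\in C^1([0,T])$ uniformly by piecewise affine $\xi_n\to\xi$ is immediate, and the continuity of $\xi\mapsto \S_H^{\xi;[0,T]}u$ in the uniform topology, granted by the stability results cited in the discussion preceding \eqref{takis91}, lets us pass to the limit in each inequality once it is known for the approximants.

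For $\xi$ piecewise monotone with breakpoints $0=t_0<t_1<\cdots<t_N=T$, the equation $u_t+H(x,Du)\dot\xi=0$ reduces on each subinterval to $u_t=\mp H(x,Du)$, so $\S_H^{\xi;[0,T]}$ equals the composition, read right-to-left, of operators $E_k$, $k=1,\ldots,N$, with $E_k=S_H(\xi(t_k)-\xi(t_{k-1}))$ when $\xi$ increases on $[t_{k-1},t_k]$ and $E_k=S_{-H}(\xi(t_{k-1})-\xi(t_k))$ when it decreases. Combining the semigroup identity $S_H(a)=S_H(b)S_H(a-b)$ with Lemma~\ref{lem:mon} and the order-preservation of $S_H,S_{-H}$ yields the two basic cancellation rules
\begin{equation*}
S_{-H}(b)\,S_H(a)\le S_H\bigl((a-b)^+\bigr)\,S_{-H}\bigl((b-a)^+\bigr),\qquad S_H(a)\,S_{-H}(b)\ge S_H\bigl((a-b)^+\bigr)\,S_{-H}\bigl((b-a)^+\bigr),
\end{equation*}
which are the only operator-level facts used in the rest of the argument.

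Part (1) then follows by induction on the total number of monotone pieces: taking a common refinement of the partitions of $\xi$ and $\zeta$, writing both operators in the factored form above, and using $\xi\le\zeta$ together with the matching endpoints to apply the first cancellation rule piece by piece, one obtains $\S_H^{\xi;[0,T]}\ge \S_H^{\zeta;[0,T]}$. For part (2), starting from a piecewise monotone $\xi$ with $\xi(0)=0$, one iteratively collapses every adjacent ``up-then-down'' pair via the first cancellation rule; this reduces the composition to the canonical ``go down to $\min\xi$ and then up to $\xi(T)$'' form $S_H(\xi(T)-\min\xi)\,S_{-H}(-\min\xi)$, giving the upper bound. The lower bound follows symmetrically from the second cancellation rule, collapsing ``down-then-up'' pairs into the canonical ``go up to $\max\xi$ and then down to $\xi(T)$'' form $S_{-H}(\max\xi-\xi(T))\,S_H(\max\xi)$. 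The last assertion is then immediate: when $\xi(0)=\inf\xi$ and $\xi(T)=\sup\xi$ the upper and lower bounds of (2) both collapse to $S_H(\xi(T)-\xi(0))$, forcing equality, and the reversed-monotonicity case is analogous.

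The main obstacle I anticipate is the combinatorial bookkeeping in the reduction step: the order in which one absorbs the up/down pairs must be chosen so that the residual operator stays in the canonical ``minimum-first'' (or ``maximum-first'') form throughout, which is exactly where the matching-endpoints hypothesis in (1) and the precise structure of the extremal compositions in (2) are essential. Once this reduction is carried out cleanly for piecewise monotone drivers, the general $C^1$ case drops out of the approximation argument in the first paragraph.
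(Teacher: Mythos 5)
The paper offers no internal proof of this lemma---it is quoted from Corollaries~2.4 and~2.8 of \cite{GGLS20}---and your proposal reconstructs essentially the argument used there: factor $\S_{H}^{\xi;[0,T]}$ into compositions of $S_H$ and $S_{-H}$ for piecewise monotone drivers, cancel via Lemma~\ref{lem:mon}, and pass to general paths by stability. Your two cancellation rules are correct consequences of Lemma~\ref{lem:mon} plus the order preservation of $S_{\pm H}$, and your treatment of the second and third assertions is sound: collapsing an up--down (resp.\ down--up) excursion preserves the endpoints and the running minimum (resp.\ maximum) of the collapsed segment, so the iteration terminates at the canonical ``down to $\min\xi$, then up'' (resp.\ ``up to $\max\xi$, then down'') composition, and the final equalities follow because the two bounds of the second assertion then coincide.

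The only step that is not a proof as written is part (1): after a common refinement, the increments of $\xi$ and $\zeta$ on a given subinterval are neither comparable nor endpoint-matched, so ``applying the first cancellation rule piece by piece'' has no literal meaning. The standard repair stays entirely inside your toolkit: pass from $\zeta$ to $\xi$ by lowering one interior breakpoint value at a time, and check that the two-piece operator corresponding to $v \to c \to w$ is non-increasing in $c$. For instance, for $c'\in[\max(v,w),c]$,
\[
S_{-H}(c-w)\,S_H(c-v)=S_{-H}(c'-w)\,S_{-H}(c-c')\,S_H(c-c')\,S_H(c'-v)\le S_{-H}(c'-w)\,S_H(c'-v)
\]
by Lemma~\ref{lem:mon} and monotonicity of the outer operators; the case $c,c'\le\min(v,w)$ uses the other half of Lemma~\ref{lem:mon}, the operator is constant for $c$ between $v$ and $w$, and moves crossing these levels are split accordingly. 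Since $\xi\le\zeta$ with equal endpoints is reached by finitely many such interior moves (this is where the endpoint condition enters), $\S_H^{\xi;[0,T]}\ge\S_H^{\zeta;[0,T]}$ follows, and your density argument then covers $C^1$ drivers. With this substitution your proof is complete and is, in substance, the proof of the cited corollaries.
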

\vs
\begin{proposition}[Corollary 2.7 in \cite{GGLS20}]%
Assume  \eqref{takis90} and \eqref{takis91}. Then, 
\begin{equation}
\mbox{the solution map } \xi \mapsto\mathcal{S}_H^{\xi,[0,T]}(u) \mbox{ admits a unique continuous extension to } \xi \in C([0,T]),
\end{equation}
and the results of Lemma~\ref{takis92} still hold for $\xi,\zeta$ $\in$ $C([0,T])$.
\end{proposition}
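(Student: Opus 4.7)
The strategy is a density-plus-uniform-continuity argument. Since $C^1([0,T])$ is dense in $C([0,T])$ in the sup-norm, it suffices to show that, for every $u\in C(\T^d)$, the map $\xi\mapsto\mathcal{S}_H^{\xi,[0,T]}(u)$ is uniformly continuous on bounded subsets of $C^1([0,T])$ with respect to the sup-norm. The continuous extension to $\xi\in C([0,T])$ is then unique, and the monotonicity and sandwich bounds of Lemma~\ref{takis92} propagate by passing to the limit.

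Fix a sequence $\xi_n\to\xi$ uniformly with $\xi_n\in C^1([0,T])$ and set $v_n(t):=\mathcal{S}_H^{\xi_n,[0,t]}(u)$. By \eqref{takis91}, the family $\{v_n(t):n\in\N,\,t\in[0,T]\}$ is equicontinuous in $x$ and uniformly bounded by $\|u\|_\infty$, hence relatively compact in $C(\T^d)$; call its closure $K$. I would upgrade this to joint equicontinuity in $(x,t)$ by applying the sandwich of Lemma~\ref{takis92} to the shifted path $r\mapsto\xi_n(s+r)-\xi_n(s)$ on each subinterval $[s,t]$: writing $m^n_{s,t},M^n_{s,t}$ for the min and max of $\xi_n$ on $[s,t]$, one obtains
\begin{equation*}
S_{-H}(M^n_{s,t}-\xi_n(t))\,S_H(M^n_{s,t}-\xi_n(s))(v_n(s))\leq v_n(t)\leq S_H(\xi_n(t)-m^n_{s,t})\,S_{-H}(\xi_n(s)-m^n_{s,t})(v_n(s)).
\end{equation*}
The $L^\infty$-contractivity of $S_{\pm H}$, together with the uniform continuity of $a\mapsto S_{\pm H}(a)(v)$ for $v\in K$, yields an estimate $\|v_n(t)-v_n(s)\|_\infty\leq\omega_K\bigl(\osc(\xi_n|_{[s,t]})\bigr)$ that is uniform in $n$. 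Since $\xi_n\to\xi$ uniformly and $\xi$ is uniformly continuous, Arzel\`a--Ascoli in $C([0,T];C(\T^d))$ extracts a subsequential limit $v^\infty$.

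Independence of $v^\infty$ from the subsequence is obtained by characterising it through finer partitions. For any partition $\pi:0=t_0<\cdots<t_N=T$, iterating the sandwich of Lemma~\ref{takis92} on each $[t_{k-1},t_k]$ produces explicit compositions $L_\pi^{\xi_n}(u)\leq v_n(T)\leq U_\pi^{\xi_n}(u)$ of $S_H$ and $S_{-H}$ whose time increments depend continuously on $\xi_n$ through its values and local extrema along $\pi$. Letting $n\to\infty$ at fixed $\pi$ yields $L_\pi^{\xi}(u)\leq w\leq U_\pi^{\xi}(u)$ for every subsequential limit $w$ of $v_n(T)$. As $\mathrm{mesh}(\pi)\to 0$ the subinterval oscillations $\osc(\xi|_{[t_{k-1},t_k]})$ vanish, and the contractivity-plus-modulus argument of the previous paragraph, applied inside the compositions, forces $U_\pi^{\xi}(u)-L_\pi^{\xi}(u)\to 0$ in $C(\T^d)$. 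This pins down $w$ uniquely, and the same reasoning at each intermediate time defines $\mathcal{S}_H^{\xi,[0,t]}(u)$ unambiguously. The monotonicity and sandwich statements of Lemma~\ref{takis92} then extend to $\xi,\zeta\in C([0,T])$ by continuous passage to the limit in the corresponding bounds for smooth approximants.

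The main obstacle is the closing of the gap $U_\pi^\xi(u)-L_\pi^\xi(u)$ as $\mathrm{mesh}(\pi)\to 0$. This step is not automatic because the irreversibility of the Hamilton--Jacobi flow, $S_H(a)S_{-H}(a)\neq\mathrm{Id}$, prevents a naive use of Lemma~\ref{lem:mon} from producing an arbitrarily small single-interval gap; the equicontinuity hypothesis \eqref{takis91} is precisely what confines the trajectories $\{v_n(t)\}$ to a compact set on which $S_{\pm H}(a)$ is uniformly continuous at $a=0$, so that the cumulative gap along the partition is controlled by $\sum_k \omega_K(\osc(\xi|_{[t_{k-1},t_k]}))$ and thus collapses under refinement.
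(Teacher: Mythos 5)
Your first half is fine: the sandwich of Lemma~\ref{takis92} applied on subintervals, together with the $L^\infty$-contraction and the compactness coming from \eqref{takis91}, does give equicontinuity of $t\mapsto v_n(t)$ uniformly in $n$ (modulo the small inaccuracy that the uniform bound is not $\|u\|_\infty$ unless $H(x,0)=0$; one gets boundedness instead from the sandwich over the whole interval, since the relevant time arguments are controlled by $\osc(\xi_n)$). The genuine gap is in the uniqueness-of-limit step. Your bound for the partition brackets is, after telescoping with the contraction property, $\|U_\pi^{\xi}(u)-L_\pi^{\xi}(u)\|\le C\sum_k \omega_K\bigl(\osc(\xi|_{[t_{k-1},t_k]})\bigr)$, and this does \emph{not} collapse as $\mathrm{mesh}(\pi)\to0$: the number of terms grows as the subinterval oscillations shrink. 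For Lipschitz data $\omega_K(r)$ is of order $r$, so the sum is comparable to the variation of $\xi$ along $\pi$, which tends to $\mathrm{TV}(\xi)$ --- strictly positive for any nonconstant smooth path and $+\infty$ for the Brownian paths that are the whole point of the statement. Summing local one-step errors over a refining partition is exactly the bookkeeping that cannot work for paths of unbounded variation; the fact that $U_\pi^\xi(u)-L_\pi^\xi(u)\to0$ is true only a posteriori, once the continuity of the extension is known, so as written your argument is circular at its key step.

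For comparison, the paper does not prove this proposition at all: it is quoted from Corollary~2.7 of \cite{GGLS20}, and the mechanism there (indicated in the text preceding the statement) avoids partitions altogether. One uses the order-reversal of Lemma~\ref{takis92} \emph{globally in time}: if $\xi,\zeta\in C^1([0,T])$ satisfy $\|\xi-\zeta\|_\infty\le\delta$, then after adjusting the endpoints by short monotone pieces one path is squeezed between two modifications of the other, which yields a two-sided bound of the form $S_{-H}(a)S_{H}(b)\,\mathcal{S}_H^{\zeta,[0,T]}u \le \mathcal{S}_H^{\xi,[0,T]}u \le S_{H}(a')S_{-H}(b')\,\mathcal{S}_H^{\zeta,[0,T]}u$ with $a,b,a',b'=O(\delta)$ --- a \emph{bounded} number of extra semigroup factors, independent of any partition. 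Assumption \eqref{takis91} (plus Lemma~\ref{lem:mon}) then gives $S_{H}(\delta)S_{-H}(\delta)\to \mathrm{Id}$ and $S_{-H}(\delta)S_{H}(\delta)\to \mathrm{Id}$ uniformly on the relevant compact equicontinuous set, so the solution map is uniformly continuous in the sup norm of the path on $C^1([0,T])$, and the unique extension and the limiting form of Lemma~\ref{takis92} follow by density, as in your last sentence. If you replace your partition iteration by this single comparison between $\xi_n$ and $\xi_{n'}$, your compactness preparations can be kept and the proof closes.
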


To establish   the long-time behavior of \eqref{takis90}, that is, the convergence to constants,  we need to assume that solutions to the deterministic problem, that is, when $\xi(t)=\pm t$, converge in the sense that
\begin{equation}\label{asn:Hinf}
\begin{split}
&\text{there exists $c \in \R$ such that, for each} \; u \in C(\T^d), \text{there exist}\\[1.2mm]
&\text{ $\phi^\pm= \phi^\pm(u)\in C(\T^d)$ such that} \ 
\  \pm c\pm H(x, D\phi^\pm)=0 \ \text{on} \  \T^d \ \ \text{and}\\[1.2mm]
 & S_H(T) u - c T  \underset{T\to \infty}\to \phi^+ \mbox{ and } S_{-H}(T) u + c T  \underset{T\to \infty}\to \phi^- \ \text{uniformly in $\T^d$}.\\
\end{split}
\end{equation}

It is a classical fact in the theory of viscosity solutions that 
the equations $c + H(x,D\phi^+) = 0$ and  $- c - H(x,D\phi^-) = 0$ are not equivalent except, of course, when there exist 
$C^1-$solutions. 
\vs

The fact that $H$ and $-H$ have opposite ergodic constants is true here due to the convexity of $H$ but is not true in general; see, for example, the discussion about this issue in \cite[Section 4.1]{Seeger18}.
\vs
Finally, sufficient conditions for the convergence in \eqref{asn:Hinf} have been obtained by several authors using either  control or PDE arguments. The literature is very long. Here,  we only refer to  \cite{F98}, which assumes that  $H$ strictly convex in the gradient, %Namah-Roquejoffre \cite{NR99}
and  \cite{BS00} for the  most general, in the sense that  no convexity of $H$ is required,  convergence result.%, and, finally,  Barles, Ishii and Mitake  \cite{BIM13}, which identifies, in the convex setting,  the dependence of $\phi^\pm$ on $u$. 
\vs

%was obta about the convergence without claiming completeness let us mention the important works of Fathi \cite{F98} ($H$ strictly convex w.r.t. $p$), Namah-Roquejoffre \cite{NR99} and Barles-Souganidis \cite{BS00}  (where $H$ is not necessarily convex),  see also for example Barles-Ishii-Mitake  \cite{BIM13} for more recent results.

In order to simplify the notation for what follows, we  set $\hat{H} = H + c$, in which case we may assume that $c=0$ and note that the convergence in \eqref{asn:Hinf} may be restated as convergence, for any $u \in C(\T^d)$,  of $S_{\hat{H}}(T)u$ and $S_{-\hat{H}}(T)u$.
\vs

We denote by $\Ec^\pm$ the set of continuous  solutions to $\pm {\hat H}(x, D\phi^\pm)=0 $ and record in the next lemma some of its properties and provide a sketch of their proof.

\begin{lemma}  \label{lem:E+-}
Assume  \eqref{takis90} and \eqref{asn:Hinf}. Then,
\vs
(i)~if $\phi \in \Ec^+$ (resp.  $\phi \in \Ec^-$), then, for each $T \geq 0$, $S_{-\hat{H}}(T)\phi \leq \phi$ (resp.  $S_{\hat{H}}(T) \phi \geq \phi$), and 
\vs
(ii)~the maps
\[ \phi \in \Ec^- \mapsto S_{\hat{H}}(\infty) \phi := \lim_{D \to \infty} S_{\hat{H}}(D) \phi \in \Ec^+ \ \text{and} \  \phi \in \Ec^+ \mapsto  S_{\hat{H}}(-\infty) \phi :=\lim_{D \to \infty} S_{-\hat{H}}(D) \phi \in \Ec^- \]
are inverse of each other.
\end{lemma}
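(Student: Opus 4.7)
The plan is to derive part (i) directly from the monotonicity Lemma \ref{lem:mon}, and then to establish part (ii) by first using \eqref{asn:Hinf} (after the reduction $\hat H = H+c$) to guarantee existence of the infinite-time limits, and then exploiting the $L^\infty$-contraction property of the semigroups to commute an infinite-time limit.

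For part (i), note that because $\phi\in\Ec^+$ solves $\hat H(x,D\phi)=0$ in the viscosity sense, $\phi$ is a time-independent viscosity solution of $v_t+\hat H(x,Dv)=0$, so $S_{\hat H}(T)\phi=\phi$ for every $T\ge 0$. Since $\hat H$ is still convex in $p$, Lemma \ref{lem:mon} applies with $\hat H$ in place of $H$ and gives $S_{-\hat H}(T)S_{\hat H}(T)\le \mathrm{Id}$; evaluating at $\phi$ yields $S_{-\hat H}(T)\phi\le\phi$. The case $\phi\in\Ec^-$ is symmetric, using instead $\mathrm{Id}\le S_{\hat H}(T)S_{-\hat H}(T)$ and the identity $S_{-\hat H}(T)\phi=\phi$.

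For part (ii), I first observe that the change of variable $w(x,t)=v(x,t)-ct$ converts the equation $v_t+H=0$ into $w_t+\hat H=0$, so $S_{\hat H}(T)u=S_H(T)u-cT$ and likewise $S_{-\hat H}(T)u=S_{-H}(T)u+cT$. Hypothesis \eqref{asn:Hinf} then becomes the statement that $S_{\hat H}(T)u\to \phi^+(u)$ and $S_{-\hat H}(T)u\to \phi^-(u)$ uniformly on $\T^d$, for every $u\in C(\T^d)$. By stability of viscosity solutions under uniform convergence, these limits are stationary solutions of the respective equations, hence $\phi^+(u)\in\Ec^+$ and $\phi^-(u)\in\Ec^-$. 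Taking $u\in\Ec^-$ (resp.\ $u\in\Ec^+$) shows that the maps in the statement are well-defined $S_{\hat H}(\infty):\Ec^-\to\Ec^+$ and $S_{-\hat H}(\infty):\Ec^+\to\Ec^-$.

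For the inversion, take $\phi\in\Ec^-$. Applying $S_{-\hat H}(T)$, which is order-preserving, to the inequality $\phi\le S_{\hat H}(T)\phi$ from part (i), and using $S_{-\hat H}(T)\phi=\phi$, gives $\phi\le S_{-\hat H}(T)S_{\hat H}(T)\phi$; combined with $S_{-\hat H}(T)S_{\hat H}(T)\phi\le\phi$ from Lemma \ref{lem:mon}, we obtain the exact identity
\[
S_{-\hat H}(T)S_{\hat H}(T)\phi=\phi \quad\text{for every }T\ge 0.
\]
Set $\psi:=S_{\hat H}(\infty)\phi\in\Ec^+$. The Hamilton-Jacobi solution operators are $L^\infty$-contractions (this is the same contraction property used in the proof of Theorem \ref{takis81}), so
\[
\bigl\|\phi-S_{-\hat H}(T)\psi\bigr\|_\infty=\bigl\|S_{-\hat H}(T)S_{\hat H}(T)\phi-S_{-\hat H}(T)\psi\bigr\|_\infty\le\bigl\|S_{\hat H}(T)\phi-\psi\bigr\|_\infty\longrightarrow 0.
\]
Hence $S_{-\hat H}(\infty)\psi=\phi$, i.e.\ $S_{-\hat H}(\infty)\circ S_{\hat H}(\infty)=\mathrm{Id}_{\Ec^-}$. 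Exchanging the roles of $\Ec^+$ and $\Ec^-$ (using the dual identity $S_{\hat H}(T)S_{-\hat H}(T)\phi=\phi$ for $\phi\in\Ec^+$) gives the opposite composition.

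The conceptually delicate point I expect is the passage to the double infinite-time limit at the end: one cannot naively commute two limits $T\to\infty$, and the clean way around it is the contraction inequality above, which reduces the problem to a single limit. Everything else is a direct application of Lemma \ref{lem:mon}, the stability of viscosity solutions, and the reduction $\hat H=H+c$.
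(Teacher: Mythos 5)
Your argument is correct, and part (i) is exactly the paper's proof: $\phi\in\Ec^{+}$ is a fixed point of $S_{\hat H}(T)$, so Lemma \ref{lem:mon} (applied to $\hat H$, still convex) gives $S_{-\hat H}(T)\phi=S_{-\hat H}(T)S_{\hat H}(T)\phi\leq\phi$, and symmetrically on $\Ec^{-}$.

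For part (ii) you take a slightly different route than the paper. The paper works directly at infinite time: for $\phi\in\Ec^{+}$ it gets $S_{\hat H}(\infty)S_{\hat H}(-\infty)\phi\geq\phi$ from Lemma \ref{lem:mon}, and $S_{\hat H}(\infty)S_{\hat H}(-\infty)\phi\leq S_{\hat H}(\infty)\phi=\phi$ by applying the order-preserving map $S_{\hat H}(\infty)$ to the inequality $S_{\hat H}(-\infty)\phi\leq\phi$ from (i); the first inequality implicitly requires justifying a passage of Lemma \ref{lem:mon} to the double limit (e.g.\ via monotonicity plus the nonexpansiveness of $S_{\hat H}(\infty)$), which the paper leaves terse. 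You instead upgrade (i) to the exact finite-time identities $S_{-\hat H}(T)S_{\hat H}(T)=\mathrm{Id}$ on $\Ec^{-}$ and $S_{\hat H}(T)S_{-\hat H}(T)=\mathrm{Id}$ on $\Ec^{+}$ (combining (i), the stationarity of $\phi$, and Lemma \ref{lem:mon}), and then use the $L^{\infty}$-contraction to reduce the inversion to a single limit $T\to\infty$, proving both compositions explicitly. This buys a cleaner treatment of precisely the point the paper glosses over (no interchange of two infinite-time limits, and no separate symmetry remark needed to get the second composition), at the modest cost of invoking the nonexpansiveness of the semigroups, which is anyway part of the paper's standing framework; your preliminary reduction $S_{\hat H}(T)u=S_{H}(T)u-cT$ and the observation that the limits in \eqref{asn:Hinf} land in $\Ec^{\pm}$ are also consistent with how the paper sets things up.
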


\begin{proof}
The first claim follows immediately from Lemma \ref{lem:mon} and the fact that $\phi \in \Ec^+$ if and only if $\phi = S_{\hat{H}}(T)\phi$ for all $T \geq 0$.
\vs
For (ii),  it suffices to prove that, for each $\phi \in \Ec^+$, $S_{\hat{H}}(\infty) S_{\hat{H}}(-\infty) \phi = \phi$, and this is obtained as follows:   Lemma \ref{lem:mon} yields that  $S_{\hat{H}}(\infty) S_{\hat{H}}(-\infty) \phi \geq \phi$, and (i) above  gives  that $S_{{\hat{H}}}(-\infty) \phi \leq \phi$. We then deduce from the comparison principle that  $S_{\hat{H}}(\infty) S_{\hat{H}}(-\infty) \phi \leq  S_{\hat{H}}(\infty) \phi = \phi$. 

\end{proof}

Following  \cite{FathiBook}, we call a pair $(\phi^+,\phi^-) \in \Ec^+ \times \Ec^-$ such that $\phi^- = S_{-{\hat{H}}}(\infty) \phi^+$ conjugate, and we denote by  $\mathcal{P}$ the set of such pairs.
\vs
{ 
We will frequently use the following monotonicity property.  
\begin{lemma} Let $(\phi^+,\phi^-)\in \mathcal{P}$, and $u \in C(\T^d)$ such that $\phi^- \leq u \leq \phi^+$. Then, for any $\xi \in C([0,T])$, it holds that
\[ \phi^- \leq S_{\hat{H}}^{\xi,[0,T]} u \leq  \phi^+.  \]
\end{lemma}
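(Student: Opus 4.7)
The plan is to combine the sandwich bounds of Lemma~\ref{takis92} with the barrier properties of $\phi^{\pm}$ recorded in Lemma~\ref{lem:E+-}. Since the operator $\S_{\hat H}^{\xi;[0,T]}$ depends only on the increments of $\xi$, I would first reduce to the case $\xi(0) = 0$ by adding a constant to $\xi$; this does not change the equation. Set $m := \min_{[0,T]} \xi \leq 0$ and $M := \max_{[0,T]} \xi \geq 0$.

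For the upper bound, the upper inequality in Lemma~\ref{takis92} gives
\[
  \S_{\hat H}^{\xi;[0,T]} u \;\leq\; S_{\hat H}\!\left(\xi(T) - m\right) S_{-\hat H}(-m)\, u.
\]
Monotonicity of the semigroups $S_{\pm \hat H}$ in the initial data (from the standard comparison principle), together with the hypothesis $u \leq \phi^{+}$, bounds this by $S_{\hat H}(\xi(T) - m)\, S_{-\hat H}(-m)\, \phi^{+}$. Now $\phi^{+}\in \Ec^{+}$ is a stationary viscosity solution of $\hat H(x, D\phi^{+}) = 0$, hence a fixed point of $S_{\hat H}$, while Lemma~\ref{lem:E+-}(i) yields $S_{-\hat H}(-m)\, \phi^{+} \leq \phi^{+}$ (the time $-m \geq 0$ is admissible since $m \leq 0$). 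The right-hand side is therefore at most $S_{\hat H}(\xi(T) - m)\, \phi^{+} = \phi^{+}$. The lower bound is proved symmetrically: the lower inequality of Lemma~\ref{takis92} reads
\[
  \S_{\hat H}^{\xi;[0,T]} u \;\geq\; S_{-\hat H}(M - \xi(T))\, S_{\hat H}(M)\, u,
\]
and then $u \geq \phi^{-}$, together with $S_{\hat H}(M)\, \phi^{-} \geq \phi^{-}$ (Lemma~\ref{lem:E+-}(i)) and the fact that $\phi^{-}\in \Ec^{-}$ is a fixed point of $S_{-\hat H}$, chains to give $\S_{\hat H}^{\xi;[0,T]} u \geq \phi^{-}$.

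I do not anticipate any real obstacle here: the statement is essentially a direct corollary of the two preceding lemmas, and the passage from $C^{1}$ paths to general continuous $\xi \in C([0,T])$ is built into the extended version of Lemma~\ref{takis92} provided by the proposition above. It is worth remarking that the conjugacy hypothesis $(\phi^{+},\phi^{-}) \in \mathcal P$ is not actually used in the argument; only membership in $\Ec^{\pm}$ is needed, which fits the heuristic that any pair of $\Ec^{\pm}$-barriers sandwiching $u$ remains a barrier under the stochastic flow.
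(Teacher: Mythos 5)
Your proof is correct and slightly different in structure from the paper's. The paper reduces to $u = \phi^{\pm}$ and then argues on piecewise-linear paths $\xi$, decomposing the interval into monotone segments where $\S_{\hat H}^{\xi}$ is alternately $S_{\hat H}$ or $S_{-\hat H}$, and showing by induction that the inequalities $S_{-\hat H}(\delta)\phi^- = \phi^- \leq S_{\hat H}(\delta)\phi^-$ and $S_{-\hat H}(\delta)\phi^+ \leq \phi^+ = S_{\hat H}(\delta)\phi^+$ propagate across each segment; a density argument then handles general continuous $\xi$. You instead invoke the already-established two-sided sandwich bound from Lemma~\ref{takis92} (valid for all $\xi \in C([0,T])$ by the extension proposition), which compresses the whole path into a single composition of two monotone flows, and then chain the comparison principle with the fixed-point and barrier facts from Lemma~\ref{lem:E+-}(i). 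The two routes are essentially equivalent — the sandwich bound itself was proved by the piecewise-linear-plus-density device — but yours is more modular: it avoids rerunning the density argument and reduces the claim to three one-line inequalities. Your closing remark is also accurate: the argument (both yours and the paper's) never uses that $(\phi^+,\phi^-)$ is a conjugate pair, only that $\phi^{\pm} \in \Ec^{\pm}$; the conjugacy hypothesis in the statement is harmless but superfluous.
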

\begin{proof}
By comparison, it suffices to prove the claim for $u = \phi^-$ and $u= \phi^+$. If $\xi$ is piecewise linear, this is an immediate implication of the following inequalities (for arbitrary $\delta \geq 0$)
\[ S_{-\hat{H}} (\delta) \phi^- = \phi^-  \leq S_{\hat{H}} (\delta) \phi^- ,\;\;\;S_{-\hat{H}} (\delta) \phi^+ \leq \phi^+  =  S_{\hat{H}} (\delta) \phi^+, \]
and the general case follows by density.
\end{proof}
}
The next proposition shows that each conjugate pair is associated to a unique global (in time) solution to $du  + H(x,Du) \circ d{\xi}=0$. 
\vs
We also show, although we will not need it later, that this correspondence is actually one-to-one if 

\begin{equation}\label{asn:reg}
\text{there exists  $T > 0$ \ such that  the set \ $\left\{ S_H(T) u : \;  u \in C(\T^d) \right\}$ is equicontinuous}.
\end{equation}

\begin{proposition} \label{prop:psi}
Assume \eqref{takis90}, \eqref{takis91} and \eqref{asn:Hinf}, and let $\xi \in C(\R;\R)$ be such that 
$\limsup_{t \to - \infty} \xi(t) = - \liminf_{t \to - \infty}  \xi(t)= +\infty.$
Then,
\vs
(i)~for each conjugate pair $(\phi^+,\phi^-)$, there exists a unique solution $\psi :\T^d \times \R\to \R$ to 
\begin{equation} \label{eq:HJglobal}
d \psi  + {\hat{H}}(x,D\psi)\circ d{\xi} = 0 \mbox{ on } \T^d \times \R,
\end{equation}
 such that 
\begin{equation} \label{eq:ineqPsi}
\phi^-(x) \leq \psi(x, t) \leq \phi^+(x) \ \text{ for all} \ t \in \R \ \text{and} \  x \in \T^d,
\end{equation}
and
\vs
(ii)~if, in addition \eqref{asn:reg} holds,  then, conversely, given a solution $\psi$ to \eqref{eq:HJglobal}, there exists a unique $(\phi^+,\phi^-)$ $\in$ $\mathcal{P}$ such that \eqref{eq:ineqPsi} holds.
\end{proposition}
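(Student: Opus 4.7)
My plan for part (i) is to construct $\psi$ as a common limit of two monotone approximations associated to the conjugate pair. Define $\psi_T^\pm(t):=\mathcal{S}_{\hat H}^{\xi,[-T,t]}(\phi^\pm)$ for $t\ge -T$. The lemma stated just before Proposition~\ref{prop:psi} (which asserts that $\mathcal{S}_{\hat H}^{\xi}$ preserves the order interval $[\phi^-,\phi^+]$) gives $\mathcal{S}_{\hat H}^{\xi,[-T',-T]}(\phi^+)\le\phi^+$ for $T'>T$ and $\mathcal{S}_{\hat H}^{\xi,[-T',-T]}(\phi^-)\ge\phi^-$, so by $L^\infty$-comparison of viscosity solutions the map $T\mapsto\psi_T^+$ is non-increasing and $T\mapsto\psi_T^-$ is non-decreasing. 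Both families remain in $[\phi^-,\phi^+]$; the equicontinuity assumption \eqref{takis91} then upgrades the pointwise monotone limits $\psi^\pm$ to uniform ones, producing continuous solutions of \eqref{eq:HJglobal} satisfying \eqref{eq:ineqPsi}. The same comparison principle yields that any solution $\psi$ satisfying \eqref{eq:ineqPsi} must lie between them, $\psi^-\le\psi\le\psi^+$, so both existence and uniqueness reduce to the single equality $\psi^+=\psi^-$.

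To prove this equality I would use the oscillation hypothesis on $\xi$ to choose $\sigma_n\to-\infty$ with $\xi(\sigma_n)=\min_{[\sigma_n,0]}\xi\to-\infty$ and $\tau_n\in(\sigma_n,0]$ with $\xi(\tau_n)=\max_{[\sigma_n,0]}\xi\to+\infty$; one has $\sigma_n<\tau_n$ for large $n$, since otherwise $\xi$ would be constant on $[\sigma_n,0]$, contradicting $\xi(\sigma_n)\to-\infty$ against finiteness of $\xi(0)$. On $[\sigma_n,\tau_n]$ the path $\xi$ attains its infimum at the left endpoint and its supremum at the right endpoint, so Lemma~\ref{takis92} gives $\mathcal{S}_{\hat H}^{\xi,[\sigma_n,\tau_n]}=S_{\hat H}(\xi(\tau_n)-\xi(\sigma_n))$. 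For any solution $\psi$ satisfying \eqref{eq:ineqPsi}, the monotone sandwich
\[
\phi^+ = S_{\hat H}(T)\phi^+ \ge S_{\hat H}(T)\psi(\sigma_n) \ge S_{\hat H}(T)\phi^-
\]
combined with $S_{\hat H}(T)\phi^-\to\phi^+$ (by Lemma~\ref{lem:E+-}(ii) and \eqref{asn:Hinf}) yields, for $T=\xi(\tau_n)-\xi(\sigma_n)\to+\infty$, the identity $S_{\hat H}(\xi(\tau_n)-\xi(\sigma_n))\psi(\sigma_n)=\phi^++o(1)$ with the $o(1)$ uniform over admissible data. Composing with $\mathcal{S}_{\hat H}^{\xi,[\tau_n,0]}$ and invoking the $L^\infty$-contraction of viscosity solutions therefore gives $\psi(0)=\mathcal{S}_{\hat H}^{\xi,[\tau_n,0]}(\phi^+)+o(1)$ with error independent of the particular $\psi$; hence any two solutions agree at $t=0$, and, by repeating the construction with $0$ replaced by any $t\in\R$, everywhere. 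The hard part here is precisely the selection of $\sigma_n,\tau_n$ making the path piecewise extremal, which crucially depends on the $\limsup/\liminf$ hypothesis.

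For part (ii), given a solution $\psi$ I would define $\phi^+:=\lim_{T\to\infty}S_{\hat H}(T)\psi(0)\in\Ec^+$ (which exists by \eqref{asn:Hinf}) and $\phi^-:=S_{-\hat H}(\infty)\phi^+\in\Ec^-$, so that $(\phi^+,\phi^-)\in\mathcal{P}$ by construction. Uniqueness of the pair is then a clean sandwich: if $(\td\phi^+,\td\phi^-)\in\mathcal{P}$ also satisfies $\td\phi^-\le\psi\le\td\phi^+$, then $S_{\hat H}(T)\td\phi^-\le S_{\hat H}(T)\psi(0)\le S_{\hat H}(T)\td\phi^+=\td\phi^+$, the left-hand side tends to $\td\phi^+$ by Lemma~\ref{lem:E+-}(ii), hence $\phi^+=\td\phi^+$, after which $\phi^-=\td\phi^-$ follows by conjugacy. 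The main obstacle, and the single place where \eqref{asn:reg} enters, is establishing the bounds $\phi^-\le\psi(\cdot,t)\le\phi^+$ at every $t\in\R$: the regularization assumption is used to control $\psi$ at past times $\sigma_n$ with $\xi(\sigma_n)\to-\infty$ (extracting uniform modulus of continuity via \eqref{asn:reg}), from which Lemma~\ref{takis92} together with \eqref{asn:Hinf} pushes the comparison forward to obtain $\psi(t)\le\phi^+$; the lower bound is symmetric, using times $\tau_n$ where $\xi(\tau_n)\to+\infty$ and $S_{-\hat H}(\infty)$.
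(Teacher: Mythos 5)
Your part (i) is essentially correct and runs on the same engine as the paper's proof: the order-preservation lemma and comparison give the monotone, sandwiched family $\psi^\pm_T$, and your uniqueness argument — selecting $\sigma_n<\tau_n$ below any fixed time with $\xi$ minimal at $\sigma_n$ and maximal at $\tau_n$ on $[\sigma_n,\tau_n]$, invoking Lemma~\ref{takis92} to write $\S_{\hat H}^{\xi,[\sigma_n,\tau_n]}=S_{\hat H}(D_n)$ with $D_n\to\infty$, squeezing via $S_{\hat H}(D_n)\phi^-\to\phi^+$, and propagating by the $L^\infty$-contraction — is exactly the mechanism the paper uses at its adapted times $T^n$, just packaged differently (the paper builds the approximations directly at those times, which makes the squeeze and the construction one and the same step).

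Part (ii) is where there is a genuine gap: the whole content of (ii) is the existence of the sandwich \eqref{eq:ineqPsi}, and your sketch does not deliver it. Two concrete problems. First, the profile that a backward-time compactness argument actually produces is $S_{\hat H}(\infty)\phi$ for a subsequential limit $\phi$ of $\psi(\sigma_n,\cdot)$ as $\sigma_n\to-\infty$; a priori this is a different object from your forward-defined $\phi^+:=S_{\hat H}(\infty)\psi(\cdot,0)$, and you never connect the two — the identity $S_{\hat H}(\infty)\psi(\cdot,0)=\phi^+$ only becomes available \emph{after} the sandwich is known, via your own uniqueness computation. Second, the lower bound is not obtained by a ``symmetric'' argument: applying $S_{-\hat H}(\infty)$ to $\psi(\cdot,0)$, or to an independently extracted backward limit, yields an element of $\Ec^-$ that in general is \emph{not} the conjugate $S_{-\hat H}(\infty)\phi^+$ (indeed $S_{-\hat H}(\infty)u$ and $S_{-\hat H}(\infty)S_{\hat H}(\infty)u$ differ in general — this is precisely why $\phi^+_{+\infty}\neq\phi^+_{-\infty}$ in Proposition~\ref{prop:support}), so the two one-sided bounds need not form a conjugate pair, which is what the statement requires. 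The paper's route avoids both issues at once: \eqref{asn:reg} gives equicontinuity, hence compactness, of $\psi(T^{n},\cdot)$ along the adapted backward times (eventually $\psi(T^n,\cdot)=S_{\hat H}(D)(\cdot)$ with $D$ large); a subsequential limit pushed forward over a min-to-max stretch gives $\psi(T^{n'-1},\cdot)\to\phi^+\in\Ec^+$; then the $L^\infty$-contraction together with the order-preservation lemma for the conjugate pair $(\phi^+,S_{-\hat H}(\infty)\phi^+)$ gives \emph{both} inequalities in \eqref{eq:ineqPsi} for every $t$, up to an error that vanishes as $T^{n'-1}\to-\infty$ — equivalently, it identifies $\psi$ with the part-(i) solution of that pair. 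With that established, your uniqueness paragraph is correct and, as a byproduct, shows that the pair is indeed $(S_{\hat H}(\infty)\psi(\cdot,0),\,S_{-\hat H}(\infty)S_{\hat H}(\infty)\psi(\cdot,0))$.
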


\begin{proof} To prove (i) choose a decreasing sequence $(T^n)_{n\in \N}$ such that $T^n \underset{n\to \infty}\to -\infty$, $T^0=0$, and, for $n \geq 1$,
\[  \xi(T^{2n+1}) = \min_{[T^{2n+2},T^{2n}]} \xi, \;\;\; \xi(T^{2n}) = \max_{[T^{2n+1},T^{2n-1}]} \xi, \]
and
\[ D^n_+ :=    \xi(T^{2n}) - \xi(T^{2n+1}) \underset{n\to \infty} \to+ \infty, \;\;\;\;D^n_- :=    \xi(T^{2n+1}) - \xi(T^{2n+2}) \underset{n\to \infty}\to -\infty,
\]
and  let $\psi^{n,+} : \T^d \times [T^n, +\infty) \to \R$ (resp. $\psi^{n,-}$) be the solution to \eqref{eq:HJglobal} in $\T^d \times [T^n, +\infty)$ with $\psi^{n,+}(T^n,\cdot) = \phi^+$ (resp. $\phi^-$).
\vs
It follows from  Lemma~\ref{lem:inf} and Lemma~\ref{lem:E+-} that $\psi^{n,+}$ is non-decreasing (resp. $\psi^{n,-}$ is non-increasing) in $n$, and,  in addition, the comparison principle gives %it holds that 
\[ \| \psi^{n,+} - \psi^{n,-} \|_{\infty; \T^d \times [T^{n-1},\infty)} \leq \| \psi^{n,+}(T^{n-1},\cdot) - \psi^{n,-}(T^{n-1},\cdot) \|_{\infty}.\]
The right hand side of  inequality above tends  to $0$, since, if,  for instance, $n$ is odd, 
\[ \psi^{n,+}(T^{n-1},\cdot) - \psi^{n,-}(T^{n-1},\cdot) = \phi^+ - S_{\hat{H}}(D^k_+) \phi^- \to_{n\to \infty} 0. \]
Hence, the sequences $(\psi^{n,+})_{n\in \N}$ and $(\psi^{n,-})_{n\in \N}$ converge (locally uniformly) to a continuous function $\psi$ on $\T^d \times \R $, which is a solution to \eqref{eq:HJglobal} satisfying \eqref{eq:ineqPsi}.
\vs
Assume now that $\tilde{\psi}$ is another function satisfying \eqref{eq:HJglobal} and \eqref{eq:ineqPsi}. Then, using again the comparison principle, we find  $\psi^{n,-}\leq \tilde{\psi} \leq \psi^{n,+}$, and, after letting $n \to \infty$, we obtain that $\tilde{\psi}=\psi$.
\vs
To prove (ii)  consider next  a solution $\psi$ to \eqref{eq:HJglobal}. In view of  \eqref{asn:reg},  the sequence $ \left( \psi(T^n,\cdot) \right)_{n \geq 0}$ is compact, and, therefore, converges, up to a subsequence $n'\to \infty$, to $\phi \in C(\T^d)$. But then, assuming for instance that the $n'$'s are odd, $\psi(T^{n'-1},\cdot)$ converges to $\phi^+=S_{\hat{H}}(\infty)(\phi)$, which, in turn,  implies that $\psi$ must coincide with the solution associated to $\phi^+$ was constructed in (i).

\end{proof}

Given $\pi =(\phi^+,\phi^-) \in \mathcal{P}$ and $\xi$ as in the previous proposition, we denote by  $\psi^{\pi,\xi}$  the unique solution to   \eqref{eq:HJglobal}.  The lemma below provides information on the behavior  of $\psi^{\pi,\xi}$ as $t\to \infty$. 
\begin{lemma} \label{lem:psifwd}
Assume  \eqref{takis90}, \eqref{takis91}, \eqref{asn:Hinf} and let $\xi \in C(\R; \R)$ be such that 
$\limsup_{t \to - \infty} \xi(t) = - \liminf_{t \to - \infty}  \xi(t)=\limsup_{t \to + \infty} \xi(t) = - \liminf_{t \to +\infty}  \xi(t)= +\infty.$
Then, for each \\
$\pi=(\phi^+,\phi^-) \in \Pc$,
\begin{equation}
\limsup_{t \to  \infty} \psi^{\pi,\xi}(\cdot,t) = \phi^+ \ \ \ \text{and} \ \ \   \liminf_{t \to \infty} \psi^{\pi,\xi}(\cdot,t) = \phi^-. 
\end{equation}
\end{lemma}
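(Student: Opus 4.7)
My plan is to exploit the assumed unboundedness of $\xi$ at $+\infty$ to extract time intervals on which $\xi$ runs from its minimum to its maximum with arbitrarily large amplitude, and then to invoke the closed-form flow identity from Lemma~\ref{lem:inf} which rewrites the evolution on such an interval as an autonomous deterministic flow of duration equal to that amplitude.

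More precisely, since $\limsup_{t\to +\infty}\xi(t)=+\infty$ and $\liminf_{t\to +\infty}\xi(t)=-\infty$, for every $n\in\N$ I can choose $a_n<b_n$ with $a_n\to\infty$, $\xi(a_n)\le -n$ and $\xi(b_n)\ge n$. Setting
\[
s_n := \inf\{ s\in[a_n,b_n] : \xi(s) = \min\nolimits_{[a_n,b_n]} \xi\},\qquad t_n := \sup\{ t\in[s_n,b_n] : \xi(t) = \max\nolimits_{[s_n,b_n]} \xi\},
\]
I obtain $a_n\le s_n\le t_n\le b_n$, $s_n\to\infty$, with $\xi(s_n)=\min_{[s_n,t_n]}\xi$, $\xi(t_n)=\max_{[s_n,t_n]}\xi$, and $D_n := \xi(t_n)-\xi(s_n) \to +\infty$.

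On such an interval, Lemma~\ref{lem:inf} applied with Hamiltonian $\hat H$ rewrites the global solution as
\[
\psi(\cdot,t_n) \;=\; \mathcal{S}_{\hat H}^{\xi,[s_n,t_n]}(\psi(\cdot,s_n)) \;=\; S_{\hat H}(D_n)(\psi(\cdot,s_n)).
\]
Because \eqref{eq:ineqPsi} yields $\phi^-\le\psi(\cdot,s_n)\le\phi^+$, the monotonicity of $S_{\hat H}(D_n)$ combined with $S_{\hat H}(D_n)\phi^+=\phi^+$ (since $\phi^+\in\Ec^+$) sandwiches
\[
S_{\hat H}(D_n)\phi^- \;\le\; \psi(\cdot,t_n) \;\le\; \phi^+.
\]
By conjugacy of $(\phi^+,\phi^-)$ and Lemma~\ref{lem:E+-}(ii), $\lim_{D\to\infty} S_{\hat H}(D)\phi^- = \phi^+$ uniformly, so the lower bound converges uniformly to $\phi^+$, and therefore so does $\psi(\cdot,t_n)$. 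Together with the global bound $\psi\le\phi^+$ this gives $\limsup_{t\to\infty}\psi^{\pi,\xi}(\cdot,t)=\phi^+$.

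The $\liminf$ statement is handled symmetrically: interchanging the roles of $+\infty$ and $-\infty$, I extract times $s_n<t_n$ with $s_n\to\infty$ on which $\xi$ attains its maximum at $s_n$ and its minimum at $t_n$ with amplitude $D_n' := \xi(s_n)-\xi(t_n)\to+\infty$. The second identity in Lemma~\ref{lem:inf} then gives $\psi(\cdot,t_n) = S_{-\hat H}(D_n')(\psi(\cdot,s_n))$, and the analogous sandwich together with $S_{-\hat H}(\infty)\phi^+=\phi^-$ (again from Lemma~\ref{lem:E+-}(ii)) yields uniform convergence to $\phi^-$, hence the lower bound from \eqref{eq:ineqPsi} forces $\liminf_{t\to\infty}\psi^{\pi,\xi}(\cdot,t)=\phi^-$. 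The only nontrivial step in the whole plan is the combinatorial selection of the intervals $[s_n,t_n]$ with the correct endpoint monotonicity of $\xi$; once this is in place, everything else reduces to comparison and the identities already recorded in Lemmas~\ref{lem:mon}, \ref{lem:inf} and \ref{lem:E+-}.
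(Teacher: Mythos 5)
Your proof is correct and follows essentially the same approach as the paper: both select a sequence of time intervals on which $\xi$ runs from a running minimum to a running maximum, invoke the canonical flow identity from Lemma~\ref{lem:inf} to reduce the stochastic evolution on those intervals to the deterministic semigroup $S_{\hat H}(D_n)$, and then sandwich $\psi$ between $S_{\hat H}(D_n)\phi^-$ and $\phi^+$ using \eqref{eq:ineqPsi} and Lemma~\ref{lem:E+-}. Your choice of $s_n$ and $t_n$ as argmin/argmax on compact windows $[a_n,b_n]$ is marginally cleaner than the paper's increasing alternating record-time sequence, since it yields the exact identity $\S_{\hat H}^{\xi,[s_n,t_n]} = S_{\hat H}(D_n)$ rather than just the one-sided bound, but the substance of the argument is the same.
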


\begin{proof}
Let $(T_{n})_{n\in \N} $ be an increasing to  $+\infty$ sequence such that, for all $n\in \N$,
\[\xi(T_{2n}) = \min_{[0,T_{2n}]} \xi \ \text{and} \  \xi(T_{2n+1}) = \max_{[0,T_{2n+1}]} \xi, \]
and set  $D_n = \xi(T_{n+1}) - \xi(T_n)$. The $D_n$'s have alternating signs, and $|D_n|$ is an increasing sequence which diverges to $+\infty$.
\vs
Then 
\[ S_H(D_{2n}) \phi^+ = \phi^+ \geq  \psi(T_{2n+1}, \cdot) \geq S_H(D_{2n}) \phi^- \to_{n \to\infty} \phi^+, \]
and, similarly,  $\psi(T_{2n}, \cdot) \to \phi^-$ as $n \to \infty$.
\end{proof}

\begin{remark}
For $t \in \R$, and $\xi$ as in Proposition \ref{prop:psi}, let $\xi^t := \xi(\cdot-t)$. It is clear that, for any $\pi \in \mathcal{P}$,
\[ \psi^{\pi, \xi^t} = \psi^{\pi, \xi} (\cdot,\cdot - t). \]
In particular, if $\xi$ is drawn according to a process whose law is invariant with respect to time-shifts, then the same invariance holds for the law of $\psi$.  This is, for instance, the case  when $\xi$ is a two-sided Brownian motion. 
\end{remark}

The main result of this section asserts that any solution on $ \T^d \times [0,\infty)$ gets, in the $t\to \infty$ limit, close to a global in time solution $\psi$ as in Proposition \ref{prop:psi}. 

\begin{theorem} \label{thm:Hconvfwd}
Let $H$ satisfy \eqref{takis90}, \eqref{takis91} and \eqref{asn:Hinf} and assume that $\xi \in C([0, \infty))$ is such that 
$\limsup_{t\to \infty} \xi(t) = -\liminf_{t\to \infty} \xi(t) = +\infty.$
Then, for any solution $u$ to
$$du + H(x,Du)\circ d\xi(t) = 0\mbox{ on } \T^d \times [0,\infty)  \;\;\; u(0,\cdot) = u_0 \in C(\T^d),$$ 
there exists a conjugate pair $(\phi^+,\phi^-)$ such that
\begin{equation} \label{eq:uFwd}
u(x,t) = c  \;\xi(t) + \psi(t,x) + o_{t\to +\infty}(1)
\end{equation}
where 
$$\phi^- \leq  \psi \leq \phi^+, \ \  \limsup_{t \to \infty} \psi(\cdot,t) = \phi^+ \ \ \text{and} \ \  \liminf_{t \to \infty} \psi(\cdot,t) = \phi^-. $$
\end{theorem}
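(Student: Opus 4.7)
First I would reduce to the case $c=0$ by setting $v(x,t) := u(x,t) - c\,\xi(t)$. Using $H = \hat{H} - c$, a direct computation shows
\[
dv = du - c\,d\xi = -\bigl(\hat{H}(x,Dv) - c\bigr)\circ d\xi - c\,d\xi = -\hat{H}(x,Dv)\circ d\xi,
\]
so $v$ solves $dv + \hat{H}(x,Dv)\circ d\xi = 0$ on $\T^d \times [0,\infty)$ with $v(\cdot,0) = u_0$. Proving the claimed expansion thus reduces to producing a conjugate pair $\pi_* = (\phi^+_*,\phi^-_*) \in \mathcal{P}$ and a solution $\psi$ of the same equation on $[0,\infty)$ sandwiched by the pair, with the prescribed $\limsup/\liminf$, so that $\|v(\cdot,t) - \psi(\cdot,t)\|_\infty \to 0$ as $t \to \infty$.

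The heart of the proof is a zig-zag extraction of the conjugate pair. Using $\limsup_{t\to\infty}\xi = -\liminf_{t\to\infty}\xi = +\infty$ and continuity of $\xi$, one builds times $R_0 \le S_1 \le T_1 \le R_1 \le S_2 \le T_2 \le \cdots$ tending to $+\infty$ with
\[
\xi(R_{n-1}) = \max_{[R_{n-1},S_n]} \xi \to +\infty,\quad \xi(S_n) = \min_{[R_{n-1},T_n]} \xi \to -\infty,\quad \xi(T_n) = \max_{[S_n,T_n]} \xi \to +\infty;
\]
concretely, set $T_n := \inf\{t \ge 0 : \xi(t) \ge n\}$, then $S_n$ a last argmin of $\xi$ on $[0,T_n]$, and $R_{n-1}$ a last argmax on $[0,S_n]$. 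On $[R_{n-1},S_n]$ (resp.\ $[S_n,T_n]$), $\xi$ goes monotonically from its maximum to its minimum (resp.\ from its minimum to its maximum), so Lemma \ref{takis92} collapses the stochastic semigroup to its deterministic counterpart:
\[
v(\cdot,S_n) = S_{-\hat{H}}\bigl(\xi(R_{n-1})-\xi(S_n)\bigr)\,v(\cdot,R_{n-1}),\quad v(\cdot,T_n) = S_{\hat{H}}\bigl(\xi(T_n)-\xi(S_n)\bigr)\,v(\cdot,S_n).
\]
The equicontinuity \eqref{takis91} together with the $L^\infty$-sandwich of Lemma \ref{takis92} makes $\{v(\cdot,t) : t \ge 0\}$ precompact in $C(\T^d)$; after passing to a diagonal subsequence, $v(\cdot,R_{n-1})\to\gamma$, $v(\cdot,S_n)\to\phi^-_*$, and $v(\cdot,T_n)\to\phi^+_*$. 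The $L^\infty$-contraction of $S_{\pm\hat{H}}(D)$ combined with \eqref{asn:Hinf} then identifies $\phi^-_* = S_{-\hat{H}}(\infty)\gamma \in \Ec^-$ and $\phi^+_* = S_{\hat{H}}(\infty)\phi^-_* \in \Ec^+$; by Lemma \ref{lem:E+-}(ii), $\pi_* := (\phi^+_*,\phi^-_*) \in \mathcal{P}$ is a conjugate pair.

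To exhibit $\psi$, take it to be the solution to $d\psi + \hat{H}(x,D\psi)\circ d\xi = 0$ on $[0,\infty)$ with $\psi(\cdot,0) = \phi^+_*$. The monotonicity lemma preceding Proposition \ref{prop:psi} yields $\phi^-_* \le \psi(\cdot,t) \le \phi^+_*$ for all $t\ge 0$. Applying Lemma \ref{takis92} on $[S_n,T_n]$ and using $\psi(\cdot,S_n)\ge\phi^-_*$,
\[
\phi^+_* \ge \psi(\cdot,T_n) = S_{\hat{H}}\bigl(\xi(T_n)-\xi(S_n)\bigr)\,\psi(\cdot,S_n) \ge S_{\hat{H}}\bigl(\xi(T_n)-\xi(S_n)\bigr)\,\phi^-_* \to \phi^+_*,
\]
so $\psi(\cdot,T_n)\to\phi^+_*$; symmetrically, using $[R_{n-1},S_n]$ and $\psi(\cdot,R_{n-1})\le\phi^+_*$, we find $\psi(\cdot,S_n)\to\phi^-_*$. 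Combined with the sandwich this yields $\limsup_{t\to\infty}\psi(\cdot,t)=\phi^+_*$ and $\liminf_{t\to\infty}\psi(\cdot,t)=\phi^-_*$. Along the subsequence on which $v(\cdot,T_n)\to\phi^+_*$ and $\psi(\cdot,T_n)\to\phi^+_*$, we obtain $\|v(\cdot,T_n)-\psi(\cdot,T_n)\|_\infty\to 0$; since the comparison principle makes $t\mapsto\|v(\cdot,t)-\psi(\cdot,t)\|_\infty$ non-increasing, this forces $\|v(\cdot,t)-\psi(\cdot,t)\|_\infty\to 0$, and substituting $u = c\,\xi + v$ gives the announced expansion.

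The main obstacle I expect is verifying that the subsequential limits $\phi^\pm_*$ genuinely form a conjugate pair: for a generic continuous $v_0$ the ergodic limits $S_{\hat{H}}(\infty)v_0$ and $S_{-\hat{H}}(\infty)v_0$ are \emph{not} conjugate, as $S_{\hat{H}}(\infty)\circ S_{-\hat{H}}(\infty)$ equals the identity only on $\Ec^+$. The zig-zag choice of $R_{n-1}<S_n<T_n$ is precisely what allows one to first apply $S_{-\hat{H}}(\infty)$ and then $S_{\hat{H}}(\infty)$ to a \emph{single} limiting profile $\gamma$, so that Lemma \ref{lem:E+-}(ii) delivers the conjugacy automatically.
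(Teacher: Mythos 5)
Your proof is correct, and its skeleton is the paper's: reduce to $c=0$ via $\hat H$, pick zig-zag times at which the path runs monotonically between running extrema so that Lemma \ref{takis92} collapses $\mathcal{S}_{\hat H}^{\xi}$ to $S_{\pm\hat H}$ of diverging times, use the equicontinuity \eqref{takis91} to extract limits, invoke \eqref{asn:Hinf} and Lemma \ref{lem:E+-} to identify a conjugate pair, and finish by comparison. The differences are in the bookkeeping, and they buy you something: by keeping the extra time $R_{n-1}$ and the intermediate limit $\gamma$, you get the conjugacy of $(\phi^+_*,\phi^-_*)$ directly from Lemma \ref{lem:E+-}(ii); and by exploiting that $t\mapsto\|v(\cdot,t)-\psi(\cdot,t)\|_\infty$ is non-increasing you only need convergence along one subsequence of times, which lets you skip the induction step \eqref{takis95} that the paper uses to show the subsequential limit of $u(\cdot,T_{2n})$ is independent of the subsequence. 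You also avoid extending $\xi$ to negative times: instead of taking the global solution of Proposition \ref{prop:psi}, you run $\psi$ forward from $\phi^+_*$ and re-derive the analogue of Lemma \ref{lem:psifwd} for it, which is fine since the sandwich lemma preceding Proposition \ref{prop:psi} applies to $u=\phi^+_*$ (recall $\phi^-_*\le\phi^+_*$ by Lemma \ref{lem:E+-}(i)). The only trade-off is that your $\psi$ is defined on $[0,\infty)$ only and is not the statistically stationary global solution featured in the introduction and used in the subsequent result on the limiting law; for the theorem as stated this is immaterial, and if one wants that version one simply replaces your $\psi$ at the last step by $\psi^{\pi_*,\xi}$ from Proposition \ref{prop:psi}, whose sandwich and $\limsup/\liminf$ properties (Lemma \ref{lem:psifwd}) make the final contraction argument go through verbatim.
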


\begin{proof}
Replacing $H$ by $\hat{H}$ if necessary, we may assume that $c=0$.
\vs

Without loss of generality, we may assume that $\xi$ is extended to $\R$ and is unbounded for negative times, so that there  exists a global solution $\psi$ associated to each conjugate pair as in Proposition \ref{prop:psi}. Of course, $\psi$ may depend on the choice of the extension, but  \eqref{eq:uFwd} will hold for all choices.
\vs

Let $T_{2n}, T_{2n+1}$, $D_n$ be as in the proof of Lemma \ref{lem:psifwd}. Due to the assumed equicontinuity, there exists  a subsequence $n'$ such that  $u(T_{2n'})$ converges to some $\phi^-$. Then by assumption \eqref{asn:Hinf}, $u(T_{2n'+1})$ converges to $\phi^+ = S_H(\infty) \phi^-$.

\vs

Note that Lemma \ref{lem:mon} implies that, for each $n$,
$$S_H(T_{2n+2}) S_H(T_{2n+1})(\phi^-) \geq S_H(D_{2n+2} +D_{2n+1})(\phi^-) = \phi^-$$
and by induction, for all $n,m \geq 0$,
\begin{equation}\label{takis95}
\S_H^{\xi,[T_{2n},T_{2n+2m}]}(\phi^-) \geq \phi^-.
\end{equation}

It follows that, for any other subsequence $n'' = n' +  m'$ with $u(T_{2n''}) \to \hat{\phi}^-$, 
$$\hat{\phi}^- = \lim S_H^{\xi,[T_{2n'},T_{2n'+2m'}]}( u(T_n)) \geq \phi^-$$
and, after reversing the roles of $\phi^-$, $\hat{\phi}^-$, we obtain that in fact \eqref{takis95} is an equality, that is. 
$$\phi^- = \lim_{n \to \infty} u(T_{2n})$$
and then obviously
$$\phi^+ = \lim_{n \to \infty} u(T_{2n+1}) = S_H(+\infty) \phi^-.$$

{  Letting $\psi$ be the solution given by Proposition \ref{prop:psi} and satisfying  $\phi^- \leq \psi \leq\phi^+$, the comparison principle then} implies that $\lim_{n \to \infty} \left\| u(\cdot,T_n)-\psi(\cdot,T_n)\right\|_{\infty}=0$.

\end{proof}

The pair $(\phi^+, \phi^-)$ above is in general not unique, so that the limiting behavior may depend on both the signal $\xi$ and the initial condition $u_0$. Recall that, in the deterministic case ($\xi(t)=t$), $\phi^+$ can be obtained from $u_0$ via a rather explicit Hopf-type formula (see \cite{DS06}). Whether there exists such a simple relation in the case of oscillating $\xi$ is an open question.
\vs

We now turn to \eqref{eq:HJstoc} with  $u_0 \in C(\T^d)$ fixed and $B = B(\omega)$ drawn according to the probability distribution of a (two-sided) Brownian motion, {  and let $\Pi(\omega) = (\Phi^+(\omega), \Phi^-(\omega))$ be the (random) conjugate pair given by Theorem \ref{thm:Hconvfwd}. We now discuss the law of the random variable $\Phi^+$.} Its support in the space $C(\T^d)$ is easily characterized by the following proposition. 
\begin{proposition}\label{prop:support} (i)~We have
\begin{equation}\label{takis96}
\mbox{Supp}(\Phi^+) = \overline{ \left \{ S_{\hat{H}}(\infty) \mathcal{S}_{\hat{H}}^{\xi;[0,1]} u_0 ; \;\xi \in C([0,1]) \right\} },
\end{equation}
where the closure is taken with respect to the  sup-norm.
\vs
(ii)~Let 
$ \phi^+_{+\infty} = S_{\hat{H}}(\infty) u_0$ and $  \phi^+_{-\infty} =  S_{\hat{H}}(\infty)  S_{-\hat{H}}(\infty) u_0.$
Then $\phi^+_{+\infty}$ (resp. $\phi^+_{-\infty}$) is the smallest (resp. largest) function in $\mbox{Supp}(\Phi^+)$.
\vs
(iii)~If $\phi^- \leq u_0 \leq \phi^+$, for some $(\phi^+,\phi^-) \in \mathcal{P}$, then $\mbox{Supp}(\Phi^+) = \{\phi^+\}$.
\end{proposition}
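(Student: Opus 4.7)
The plan is to prove (ii) first using the sandwich inequalities of Lemma~\ref{lem:inf}, then deduce (iii) by combining (ii) with the monotonicity lemma stated just before Proposition~\ref{prop:psi}, and finally establish (i) by combining (iii) with the full support of Brownian motion on $C([0,T])$ and the continuity of the solution map in the signal. Throughout I may assume $c=0$ (i.e.\ $H=\hat H$), and I use the random times $T_{2n}, T_{2n+1}$ from the proof of Theorem~\ref{thm:Hconvfwd}, at which $B$ attains its running minimum on $[0,T_{2n}]$ and its running maximum on $[0,T_{2n+1}]$, so that $B(T_{2n+1})\to+\infty$ and $B(T_{2n})\to-\infty$ almost surely.

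For (ii) I apply Lemma~\ref{lem:inf} on $[0,T_{2n+1}]$. Since $B(T_{2n+1}) = \max_{[0,T_{2n+1}]} B$, the two sandwich bounds collapse to
\[
S_{\hat{H}}\!\left(B(T_{2n+1})\right) u_0 \;\leq\; \mathcal{S}_{\hat{H}}^{B;[0,T_{2n+1}]} u_0 \;\leq\; S_{\hat{H}}\!\left(B(T_{2n+1}) - \min_{[0,T_{2n+1}]} B\right) S_{-\hat{H}}\!\left(-\min_{[0,T_{2n+1}]} B\right) u_0.
\]
Passing to the limit $n\to\infty$ along this subsequence yields $\phi^+_{+\infty} \leq \Phi^+ \leq \phi^+_{-\infty}$ a.s., which is the extremality claim. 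That $\phi^+_{\pm\infty}$ themselves lie in $\mbox{Supp}(\Phi^+)$ will follow from (i) applied to $\xi\equiv 0$ and to $\xi$ decreasing from $0$ to $-s$ for $s$ large, respectively.

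For (iii) the assumption $\phi^- \leq u_0 \leq \phi^+$ together with the monotonicity property stated before Proposition~\ref{prop:psi} gives $\phi^- \leq u(\cdot,t) \leq \phi^+$ for all $t\geq 0$, whence $\Phi^+ \leq \phi^+$ a.s. Since $u_0 \geq \phi^-$ implies $S_{\hat H}(\infty) u_0 \geq S_{\hat H}(\infty)\phi^- = \phi^+$, the lower bound from (ii) forces $\Phi^+ = \phi^+$ a.s., and so $\mbox{Supp}(\Phi^+)=\{\phi^+\}$.

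For (i), the inclusion $\mbox{Supp}(\Phi^+) \subseteq K$ (with $K$ denoting the right-hand side of \eqref{takis96}) uses the representation $\Phi^+ = S_{\hat H}(\infty) \Phi^-$ and $\Phi^- = \lim_n \mathcal{S}_{\hat H}^{B;[0,T_{2n}]} u_0$ extracted from the proof of Theorem~\ref{thm:Hconvfwd}. After the time-reparametrization $\mathcal{S}_{\hat H}^{B;[0,T]} u_0 = \mathcal{S}_{\hat H}^{B(T\,\cdot);[0,1]} u_0$, each approximating term belongs to $\{\mathcal{S}_{\hat H}^{\xi;[0,1]} u_0 : \xi \in C([0,1])\}$, and since $S_{\hat H}(\infty)$ is $1$-Lipschitz on $C(\T^d)$ as a pointwise limit of the $1$-Lipschitz semigroup, $\Phi^+ \in K$. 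For the reverse inclusion, given $w = S_{\hat H}(\infty) v$ with $v = \mathcal{S}_{\hat H}^{\xi;[0,1]} u_0$ and $\ep>0$, I extend $\xi$ to a path $\tilde\xi$ on $[0,1+s]$ that increases linearly by $s$ on $[1,1+s]$; Lemma~\ref{lem:inf} then gives $\mathcal{S}_{\hat H}^{\tilde\xi;[0,1+s]} u_0 = S_{\hat H}(s) v$, which is within $\ep$ of $w$ for $s$ large. By the full support of Brownian motion on $C([0,1+s])$ together with continuity of the solution map in the signal, with positive probability $\|u(\cdot,1+s) - w\|_\infty < 2\ep$. Since $w \in \Ec^+$, applying (iii) to the initial datum $w$ with the conjugate pair $(w, S_{-\hat H}(\infty) w)$ and the independent shifted Brownian motion past time $1+s$ yields that the $\Phi^+$ issued from $w$ at time $1+s$ equals $w$ a.s.; combined with the $L^\infty$-contraction of the pathwise solution map, this forces $\|\Phi^+ - w\|_\infty \leq 2\ep$ on this positive-probability event, so $w \in \mbox{Supp}(\Phi^+)$. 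The main potential obstacle is rigorously justifying the reparametrization identity used in the forward inclusion, which should follow from the construction of the pathwise solution as a continuous limit of classical solutions driven by smooth approximations of $B$.
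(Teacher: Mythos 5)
Your proof is correct and follows the same core ideas as the paper's, but with a genuinely different (and arguably cleaner) logical ordering. The paper proves (i)$\to$(ii)$\to$(iii), but its proof of the reverse inclusion in (i) ends with the step ``$\|u(\cdot,2)-\phi^+\|_\infty\le\varepsilon_n$, and, hence, $\|\Phi^+-\phi^+\|\le\varepsilon_n$'', which is terse: to go from the $\varepsilon_n$-closeness of $u(\cdot,2)$ to $\phi^+\in\Ec^+$ to the $\varepsilon_n$-closeness of $\Phi^+$ to $\phi^+$, one needs to know that the limiting $\Phi^+$ for a solution started at $\phi^+\in\Ec^+$ is exactly $\phi^+$, which is precisely (iii) applied to $u_0=\phi^+$ with the conjugate pair $(\phi^+, S_{-\hat H}(\infty)\phi^+)$, combined with $L^\infty$-contraction. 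You make this implicit dependence explicit by first establishing the a.s.\ sandwich $\phi^+_{+\infty}\le\Phi^+\le\phi^+_{-\infty}$ (the comparison-bound half of (ii), obtained by applying Lemma~\ref{lem:inf} at the running-maximum times $T_{2n+1}$ rather than for arbitrary $\xi\in C([0,1])$ as in the paper), then deriving (iii), then using (iii) in the reverse inclusion of (i), and only at the end closing the loop on the membership half of (ii). The technical details — the reparametrization identity $\mathcal{S}_{\hat H}^{B;[0,T]}u_0 = \mathcal{S}_{\hat H}^{B(T\cdot);[0,1]}u_0$, the $1$-Lipschitz continuity of $S_{\hat H}(\infty)$, the verification that $S_{-\hat H}(\infty)w\le w$ so that $(w, S_{-\hat H}(\infty)w)\in\mathcal{P}$ does sandwich $w$ — are all correctly handled. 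The two approaches prove the same things; what yours buys is a more transparent logical dependency graph, at the cost of a longer argument for (ii) split across two places.
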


\begin{proof} 

To prove (i) observe that $\mbox{Supp}(\Phi^+)$ is contained in the set in the right, since 
$$\Phi^+(\omega) = S_{\hat{H}}(\infty) \Phi^+(\omega) = \limsup_{t \to \infty} S_{\hat{H}}(\infty) S_{\hat{H}}^{\xi;[0,t]} u_0.$$
\vs
For the reverse inclusion, let $\phi^+ = S_{\hat{H}}(\infty) S^{\xi}_{\hat{H}} u_0$ and define  $\xi^{(n)} \in C([0,2])$  by $\xi^{(n)} = \xi $ on $[0,1]$ and $\dot{\xi}=n$ on $[1,2]$. Then, the  continuity of the solution map yields 
$\ep_n\to 0$ such that, if 
$\| B - \xi^{(n)}\|_{\infty;[0,2]} \leq n^{-1}$, then 
$\|u(2,\cdot) - \phi^+\|_{\infty} \leq \varepsilon_n,$
and, hence, 
\[\|\Phi^+(\omega) - \phi^+\|\leq \ep_n.\]
Since, in view of the full support property of Brownian motion, the event $\{\omega \in \Omega: \; \| B - \xi^{(n)}\|_{\infty;[0,2]} \leq n^{-1}\}$ has positive probability, it follows that $\phi^+$ is in the support of $\Phi^+$.
\vs

To prove (ii), let $\xi \in C([0,1])$ with $\xi(0)=0$. Then Lemma \ref{lem:inf} yields that
\[ S^\xi_{\hat{H}} u_0 \geq  S_{-\hat{H}}(\gamma) S_{\hat{H}}(D) u_0, \]
where $D = \max_{[0,1]} \xi$, $\gamma = \max_{[0,1]} \xi - \xi(1)$ both being non-negative. 
\vs
It then follows from Lemma~\ref{lem:mon} that
\[ S_{\hat{H}}(\infty) S_{-\hat{H}}(\gamma) S_{\hat{H}}(D) u_0 \geq S_{\hat{H}}(\infty) u_0,\]
which implies that  $\phi^+_{+\infty}$ is indeed the smallest element of the support.

{ 
Similarly, for $D' = -\min_{[0,1]} \xi$, $\gamma' = -\min_{[0,1]} \xi + \xi(1)$, we have by Lemma \ref{lem:inf} that
\[ S^\xi_{\hat{H}} u_0 \leq  S_{\hat{H}}(\gamma') S_{-\hat{H}}(D') u_0 \leq S_{} \]
and then by Lemma~\ref{lem:mon}
\[ S_{\hat{H}}(\infty) S^\xi_{\hat{H}} u_0 \leq  S_{\hat{H}}(\infty) S_{-\hat{H}}(D') u_0 \leq  S_{\hat{H}}(\infty) S_{\hat{H}}(n)  S_{-\hat{H}}(n)S_{-\hat{H}}(D') \to_{n \to \infty} S_{\hat{H}}(\infty)  S_{-\hat{H}}(\infty) u_0,  \]
so that $\phi^+_{-\infty}$ is the largest element of the support.
}

\vs

Finally, for (iii) observe that the assumption on $u_0$ implies that $\phi^+_{+\infty} = \phi^+_{-\infty} = \phi^+$, hence,  the result is a consequence of (ii).

\end{proof}

 A simple example, already considered in section \ref{qc}, is the homogeneous Hamiltonian  $H=H(p)$ with $H(0)=0<H(p)$ for $p\neq 0$. Then $c=0$, $\mathcal{E}^+ = \mathcal{E}^- = \R$, and Proposition~\ref{prop:support} implies that 
$$ \mbox{Supp}(\Phi^+) = \left[ \inf u_0, \sup u_0 \right].$$
In particular, in this case $\Phi^+(\omega)$ is not a constant random variable unless $u_0$ is constant.
\vs
 It is also not difficult to construct an example where $\Phi^+(\omega)$ is not even constant modulo additive scalars. Indeed, in view of the previous conclusion, it suffices to construct $H$ and $u_0$ so that the difference of $S_{\hat{H}}(\infty) u_0$ and  $S_{\hat{H}}(\infty) S_{-\hat{H}}(\infty) u_0 $ is not constant. 
\vs 

Next we consider the large-time value of a solution to \eqref{eq:HJstoc1}, still in the case of a driving Brownian motion, where,  in view of Theorem \ref{thm:Hconvfwd}, 
\[ u(x,t) = cB(t) + \psi^{\Pi(\omega)}(x,t,\omega) +o_{t\to +\infty}(1). \]
It follows that $u(\cdot,t)$ depends, up to a small error, on (i)~the macroscopic position given by $c B(t)$, (ii)~the limiting upper/lower profiles $\Pi=(\Phi^+,\Phi^-)$, and (iii)~the actual value of the associated stationary solution $\psi(t,\omega)$ at time. The result below states, roughly speaking, that these three factors are independent in the large time limit.

\begin{theorem}
Assume \eqref{takis90}, \eqref{takis91} and \eqref{asn:Hinf}, and let $u$ be a $\T^d-$periodic solution to \eqref{eq:HJstoc1} with $B$ Brownian motion. Then, there exist three independent random variables $Z \sim \mathcal{N}(0,1)$  and $B,B'$  two-sided Brownian motions, such that, in law and %one has the following convergence in law 
with respect the  sup-norm topology for the second component, 
\begin{equation}
 \left( \frac{B(t)}{\sqrt{t}}, u(\cdot,t) - c B(t) \right)\underset{t\to\infty} \to \left(Z,  \psi^{\Pi(B), B'}(\cdot,0) \right).
\end{equation}
\end{theorem}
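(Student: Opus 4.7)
The plan is to decouple $u(\cdot,t)-cB(t)$ into two asymptotically independent pieces: the conjugate pair $\Pi(B)$, which encodes the limiting envelope produced by the history of $B$, and the current value of the associated stationary solution, which essentially depends on the noise near time $t$. As a starting point, extend $B$ to a two-sided Brownian motion $B^\ast$ by gluing an independent backward Brownian motion. Theorem \ref{thm:Hconvfwd} then gives
\[ u(\cdot,t)-cB(t)=\psi^{\Pi(B),B^\ast}(\cdot,t)+o_{L^\infty}(1), \]
and, since the equation depends only on the increments of the driving signal, the shift-invariance recorded after Lemma \ref{lem:psifwd} yields
\[ \psi^{\Pi(B),B^\ast}(\cdot,t)=\psi^{\Pi(B),\tilde B^t}(\cdot,0),\qquad \tilde B^t(s):=B^\ast(s+t)-B^\ast(t), \]
with $\tilde B^t$ again a two-sided Brownian motion. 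The statement will thus follow from the joint convergence in law $(B(t)/\sqrt t,\Pi(B),\tilde B^t)\to(Z,\pi,B')$, where $Z\sim\mathcal N(0,1)$, $\pi$ has the law of $\Pi(B)$, $B'$ is a two-sided Brownian motion, and $Z$, $\pi$, $B'$ are mutually independent.

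The first key step is to show that $\Pi(B)$ is asymptotically measurable with respect to the early path $B|_{[0,s]}$. Define the deterministic approximants $\hat\Phi^{+}_s:=S_{\hat H}(\infty)(u(s)-cB(s))$ and $\hat\Phi^{-}_s:=S_{-\hat H}(\infty)(u(s)-cB(s))$, which depend only on $B|_{[0,s]}$. By Theorem \ref{thm:Hconvfwd}, $u(s)-cB(s)=\psi(s)+o(1)$ with $\phi^-\le\psi(s)\le\phi^+$; the monotonicity from Lemma \ref{lem:mon} together with the deterministic convergence \eqref{asn:Hinf} imply that $S_{\hat H}(T)\psi(s)$ is sandwiched between $S_{\hat H}(T)\phi^-$ and $\phi^+$, both of which converge to $\phi^+$ as $T\to\infty$; hence $S_{\hat H}(\infty)\psi(s)=\phi^+$, and the sup-norm contractivity of $S_{\hat H}(T)$ gives $\hat\Phi^{+}_s\to\Phi^+$ almost surely in $C(\T^d)$, and similarly for $\Phi^-$. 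In particular, for any $\varepsilon>0$ there exists $s_\varepsilon$ such that $\Pi(B)$ is approximated to within $\varepsilon$ by a function of $B|_{[0,s_\varepsilon]}$.

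The second ingredient is a localization in the past for the stationary solution: for $\pi\in\mathcal P$ and $\xi$ a two-sided path, $\psi^{\pi,\xi}(\cdot,0)$ is determined, up to an error smaller than $\varepsilon$, by $\xi|_{[-M_\varepsilon,\infty)}$ for some deterministic $M_\varepsilon$, uniformly in $\pi$ in compact subsets of $\mathcal P$. This is extracted from the construction in Proposition \ref{prop:psi}: the approximants $\psi^{n,\pm}$ with initial data $\phi^\pm$ at $T^n\to-\infty$ satisfy $\|\psi^{n,+}-\psi^{n,-}\|_\infty\le\|\phi^+-S_{\hat H}(D^n_+)\phi^-\|_\infty\to 0$, which yields a quantitative modulus depending only on the speed at which $S_{\hat H}(T)\phi^-\to\phi^+$ and on the size of the oscillations of $\xi$ over $[-M,0]$. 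Combined with the equicontinuity in \eqref{takis91} and the compactness of the set of conjugate pairs in $C(\T^d)^2$, this produces the desired uniformity.

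Given these two estimates, the joint convergence is obtained by fixing $\varepsilon>0$, choosing $s_\varepsilon$ and $M_\varepsilon$ as above, and taking $t\ge s_\varepsilon+M_\varepsilon$. Then $\tilde B^t|_{[-M_\varepsilon,\infty)}$ is a functional of $B|_{[t-M_\varepsilon,\infty)}$ and, by independence of increments, is independent of $B|_{[0,s_\varepsilon]}$, hence of the $\varepsilon$-approximation of $\Pi(B)$. The scaling identity $B(t)/\sqrt t=B(t-M_\varepsilon)/\sqrt t+O(\sqrt{M_\varepsilon/t})$ shows that $B(t)/\sqrt t$ converges in law to $\mathcal N(0,1)$ while remaining asymptotically independent of any fixed past window of $B$ and of local increments of length $M_\varepsilon$ near $t$. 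Letting $t\to\infty$ first and then $\varepsilon\to 0$, and using the continuous dependence of $(\pi,\xi)\mapsto\psi^{\pi,\xi}(\cdot,0)$ in the sup-norm topology, delivers the claimed limit. I expect the main obstacle to be the uniformity in the second ingredient: producing a modulus $M_\varepsilon$ independent of the random pair $\Pi(B)$ requires a pair-uniform quantitative bound on $\|S_{\hat H}(T)\phi^--\phi^+\|_\infty\to 0$, which is not directly provided by \eqref{asn:Hinf} and may demand an additional compactness or tightness argument.
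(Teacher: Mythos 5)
Your strategy is essentially the paper's: the proof there also decouples the limit into three asymptotically independent pieces by splitting $[0,t]$ into $[0,\sqrt t]$, $[\sqrt t, t-\sqrt t]$, $[t-\sqrt t,t]$ (your fixed windows $[0,s_\varepsilon]$ and $[t-M_\varepsilon,t]$ play the same role), showing that $\Phi^+$ is $\varepsilon$-determined by the early increments, $B(t)/\sqrt t$ by the bulk, and the value of the stationary solution by the recent increments on a high-probability oscillation event, exactly as in your two ``ingredients.'' Concerning the obstacle you flag at the end: it is real as you phrased it, because the set of \emph{all} conjugate pairs $\mathcal{P}$ is in general not compact (for homogeneous $H$ it is $\{(k,k):k\in\R\}$), so ``compactness of the set of conjugate pairs'' cannot be invoked; but the fix is the one you anticipate, and it is what the paper does. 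One only needs the uniform threshold $T'_\varepsilon$ over the pairs that can actually arise from the fixed initial datum, i.e.\ over $\phi^+\in \mathrm{Supp}(\Phi^+(u_0))$; by Proposition \ref{prop:support} this set is order-bounded between $\phi^+_{+\infty}$ and $\phi^+_{-\infty}$ and, by \eqref{takis91}, equicontinuous, hence compact in $C(\T^d)$, and since $T\mapsto S_{\hat H}(T)\phi^-$ is monotone and the maps are nonexpansive, the pointwise convergence from \eqref{asn:Hinf} becomes uniform on this compact set. With that substitution (uniformity over $\mathrm{Supp}(\Pi(B))$ rather than over $\mathcal P$), your argument closes and coincides in substance with the paper's proof; your first ingredient, approximating $\Phi^+$ by $S_{\hat H}(\infty)(u(\cdot,s)-cB(s))$, is a slight (valid) variant of the paper's event $A_t$, which instead waits for a large rise of $B$ in the early window and uses a $T_\varepsilon$ uniform over $\xi\in C([0,1])$, again via \eqref{takis91}.
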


\begin{proof}
We split $[0,t]$ into the three disjoint intervals 
$$I^1_t= [0,\sqrt{t}], \  I^2_t=[\sqrt{t}, t-\sqrt{t}] \ \text{and} \  I^3_t=[t-\sqrt{t},t],$$ 
noting that $\sqrt{t}$ could be replaced by any function growing to infinity slower than $t$. 
\vs

The idea of the proof is that, up to a small error, each of $\Phi^+$, $\frac{B(t)}{\sqrt{t}}$, $\psi(\cdot,t)$ are determined respectively by the value of the increments of $B$ on these subintervals. % respectively.
\vs

Fix $\varepsilon > 0$. It suffices to show that, for each $t >0$ and $i=1,2,3$, there exist independent Brownian motions $B^{i,t}$ such that
\begin{equation} \label{eq:convB123}
\lim_{t \to \infty} \P \left(\frac{ \left|B^{2,t}(t) - B(t)\right|} {\sqrt{t}} + \left\| u(\cdot,t) - c B(t) - \psi^{\Pi(B^{1,t}), B^{3,t}}(\cdot,t) \right\| \leq 3 \varepsilon \right) = 1.
\end{equation}

Let 
\[A_t := \left\{ \text{there exist $\phi \in \mathcal{E}^+$ \; and  \ $s \in I^1_t$ such that  \; $\left\| u(\cdot,s) - \phi \right\| \leq \varepsilon$}\right\}, \]
\vs
and note that there exists some  (deterministic) $T_\varepsilon$ so that 
\[ A_t \supset \left\{\max_{s \in I^1_t} \left( B(s) - \min_{[0,s]} B \right) \geq T_\varepsilon \right\}. \]
Indeed, it suffices to choose  $T_\varepsilon$ such that
\[\sup_{\xi \in C([0,1])} \left\| S_{\hat{H}}(\infty) \S_{\hat{H}}^{\xi,[0,1]}u_0 - S_{\hat{H}}(T_{\varepsilon}) \S_{\hat{H}}^{\xi,[0,1]}u_0 \right\|_{\infty} \; \leq \; \varepsilon , \]
which exists by \eqref{takis91} and \eqref{asn:Hinf}.
\vs
It then follows from the scaling properties of the Brownian motion that, as $t \to \infty$, 
\[ \P(A_t) \geq \P \left( \max_{s \in [0,1]} \left( B(s) - \min_{[0,s]} B \right) \geq t^{-1/4} T_{\varepsilon} \right) \to 1. \] 
\vs
Also note that, for $\phi$ as in the definition of the event, on $A_t$ we have $\| \Phi^+ - \phi \| \leq \varepsilon$.
\vs
Furthermore,  let
\[ A'_t :=  \{ \max_{I^3_t} B - \min_{I^3_t} B \geq T'_\varepsilon \}. \]
It follows that, if $T'_\varepsilon$ is large enough, then, on $A'_t$, 
\begin{equation} \label{eq:onA't}
\sup_{\phi^+ \in \mathcal{E}^+} \sup_{\xi \in C([0,1])} \left\| \S_{\hat{H}}^{B, I^3_t} \S_{\hat{H}}^{\xi,[0,1]} \phi^+ -  \psi^{B,\phi^+} \right\| \leq \varepsilon. 
\end{equation}
Indeed, it suffices to choose $T'_{\varepsilon}$ so that 
\[ \sup_{\phi^+ \in Supp(\Phi^+(u_0))} \|\phi^+ - \S_{\hat{H}}(T'_{\varepsilon})(\phi^-) \|_{\infty} + \|\phi^- -\S_{-\hat{H}}(T'_{\varepsilon})(\phi^+) \|_{\infty} \leq \varepsilon,\]
with $\phi^-$ the conjugate of $\phi^+$.
\vs
Let $\tau_1$ and $\tau_2$ 
be the times where $B$ attains its minimum and maximum in $I^3_t$. Then,  assuming for instance that $\tau_1< \tau_2$, we find that,  for any $\xi$ in $C([0,1])$,
\[ \S_{\hat{H}}^{B, I^3_t} \S_{\hat{H}}^{\xi,[0,1]} \phi^+ \leq \S_{\hat{H}}^{B,[\tau_2, t]} \phi^+ \]
and 
\[ \S_{\hat{H}}^{B, I^3_t} \S_{\hat{H}}^{\xi,[0,1]} \phi^+ \geq \S_{\hat{H}}^{B, [\tau_1,t]} \phi^- \geq  \S_{\hat{H}}^{B,[\tau_2,t]} S_{\hat{H}}(T'_\varepsilon) \phi^- \geq  \S_{\hat{H}}^{B,[\tau_2,t]} \phi^+ - \varepsilon.\]
\vs
Since $\phi^- \leq \psi \leq \phi^+$ at all times, $ \psi^{B,\phi^+}$ also satisfies the same inequalities, \eqref{eq:onA't} follows.
\vs

For $i=1,2,3$,  let $B^{i,t}$ be independent Brownian motions with the same increments as $B$ on $I^i_t$ and note that on $A_t$, again choosing  $\phi$ as in the definition of the event, it holds that
\[  \| \Phi^+(B^{1,t}) - \phi \| \leq \varepsilon.\]
 Then, on $A_t \cap A'_t$, we have 
\begin{align*}
u(\cdot,t) - c B(t) &= S_{\hat{H}}^{B;I^3_t} S_{\hat{H}}^{B; [s,t -\sqrt{t}] } \phi \pm \varepsilon =   \psi^{\phi, B^{3,t}}(\cdot,t) \pm 2 \varepsilon = \psi^{\Phi^+(B^{1,t}), B^{3,t}}(\cdot,t) \pm 3 \varepsilon,
\end{align*}
where by $\pm \varepsilon$ we mean any function with sup-norm less than $\varepsilon$. 
\vs
Then  \eqref{eq:convB123} follows, since we clearly also have that
\[\P \left(\frac{B^{2,t}(t) - B(t)} {\sqrt{t}} \geq \varepsilon \right)\to 0 \ \  \text{as} \ \ t \to \infty. \]
\end{proof}

We remark  that in many cases of interest, solutions of $\mathcal{E}^+$ and $\mathcal{E}^-$ are unique up to additive constants. This is,  for example,  the case if $H(x,p) = \frac{1}{2} |p|^2 - V(x)$ with  $V\in C(\T^d)$ attaining its minimum at a unique point of $\T^d$, or if $H(x,p) = a(x) \sqrt{1+|p|^2}$ where $a > 0$ attains its maximum at a unique point. 
\vs
For the reader's convenience, we state  the  results in this case, where, in particular, any two solutions to the equation to \eqref{eq:HJstoc} on $[0,\infty) \times \T^d$ become asymptotically close, as $t \to \infty$, up  to a constant.

\begin{corollary}
Assume that $H$ satisfies \eqref{takis90}, \eqref{takis91} and \eqref{asn:Hinf}, and, in addition, that 
\[ \Pc =\left\{(\phi_0^+ + k, \phi_0^- + k)\;:\; k\in \R\right\} \mbox{ for some }  \phi_0^+, \phi_0^- \in C(\T^d). \]
 Then, given $\xi \in C(\R)$ which is unbounded from above and below at $\pm\infty$, 
 \vs
(i)~there exists a unique solution $\psi$ to $d \psi + \hat{H}(x,D\psi) \circ d{\xi} = 0$ on $ \T^d\times \R$ such that $\phi_0^- \leq \psi \leq \phi_0^+$,
\vs
(ii)~given any solution $u$ to $d u + H(x,Du) \circ d{\xi} = 0$ on $\T^d\times [0,\infty)$, there exists a constant\\ $k = k(u(\cdot,0), \xi)$ such that 
\[ \lim_{t \to \infty} \left\| u(\cdot,t) -  c \xi(t) - \psi(\cdot,t) - k\right\|_{\infty} = 0,\]
\vs
(iii)~when $ \xi=B(\omega)$ is a (two-sided) Brownian motion, then, in law and the sup-norm in the second argument, 
{ 
$$\left(\frac{B(t)}{\sqrt{t}}, u(\cdot,t) - c B(t) \right) \underset{{t \to +\infty}}\to \left(Z,  {\psi}^{B'}(\cdot,0)+ k(u_0,B) \right),$$

with $Z\sim \mathcal{N}(0,1)$, and the Brownian motions $B,B'$, are independent.}
\end{corollary}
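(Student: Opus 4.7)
The plan is to deduce all three parts directly from the results already proved in this section, using the hypothesis on $\Pc$ to pin down global solutions up to a single additive scalar. The crucial structural observation, used repeatedly, is that the equation $d\psi + \hat{H}(x, D\psi)\circ d\xi = 0$ is invariant under adding a constant to $\psi$ (since only $D\psi$ appears), so if $\psi$ is a solution satisfying $\phi_0^- \leq \psi \leq \phi_0^+$, then, for every $k \in \R$, $\psi + k$ is a solution satisfying $\phi_0^- + k \leq \psi + k \leq \phi_0^+ + k$. For (i), I would simply apply Proposition~\ref{prop:psi}(i) to the conjugate pair $(\phi_0^+, \phi_0^-)$, which produces the unique global solution $\psi$ with $\phi_0^- \leq \psi \leq \phi_0^+$.

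For (ii), Theorem~\ref{thm:Hconvfwd} yields a conjugate pair $(\phi^+, \phi^-)$ and an associated global solution $\tilde\psi$ such that $u(\cdot,t) = c\,\xi(t) + \tilde\psi(\cdot,t) + o_{t \to \infty}(1)$ with $\phi^- \leq \tilde\psi \leq \phi^+$. By the standing assumption on $\Pc$, one has $(\phi^+, \phi^-) = (\phi_0^+ + k, \phi_0^- + k)$ for a (unique) scalar $k$, which, since $\phi^\pm$ are determined from $u_0$ and $\xi$, defines $k = k(u_0,\xi)$. By the shift-equivariance observed above and the uniqueness statement in Proposition~\ref{prop:psi}(i) applied to the translated pair $(\phi_0^+ + k, \phi_0^- + k)$, we obtain $\tilde\psi = \psi + k$, where $\psi$ is the solution from (i). This is exactly the claim in (ii).

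For (iii), I would invoke the preceding theorem (convergence in law for Brownian driving), which gives
\[
\left(\tfrac{B(t)}{\sqrt{t}},\, u(\cdot,t) - c B(t)\right) \underset{t\to\infty}{\longrightarrow} \bigl(Z,\, \psi^{\Pi(B),\, B'}(\cdot,0)\bigr)
\]
in law and with sup-norm topology on the second coordinate, where $Z \sim \mathcal{N}(0,1)$ and the Brownian motions $B, B'$ are independent. Under our assumption on $\Pc$, the random conjugate pair reads $\Pi(B) = (\phi_0^+ + k(u_0, B),\, \phi_0^- + k(u_0, B))$, with $k(u_0, B)$ the scalar from (ii). Applying shift-equivariance once more to the global solution driven by $B'$, we obtain $\psi^{\Pi(B),\, B'}(\cdot,0) = \psi^{B'}(\cdot,0) + k(u_0, B)$, which yields the stated limit. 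The only mildly delicate point in the whole argument is the shift-equivariance bookkeeping at the level of global solutions, but this is immediate from the form of the equation and the uniqueness in Proposition~\ref{prop:psi}; no new analytical work is required.
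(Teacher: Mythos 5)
Your argument is correct and is exactly the intended route: the paper states this corollary without a separate proof, as an immediate specialization of Proposition~\ref{prop:psi}, Theorem~\ref{thm:Hconvfwd} and the preceding convergence-in-law theorem to the case where $\mathcal{P}$ is a single pair modulo additive constants, which is precisely what you do. Your use of the translation invariance of the equation together with the uniqueness in Proposition~\ref{prop:psi} to identify the global solution for the shifted pair $(\phi_0^++k,\phi_0^-+k)$ with $\psi+k$ is the only bookkeeping needed, and it is handled correctly.
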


\section{Quantitative estimates for homogeneous SPDE }\label{sec:1d_homogeneous}

\subsection{Convergence due to  stochastic fluctuations}\label{sec:homogeneous}

We fix $\xi \in C([0,\infty))$ and consider the long-time behavior of parabolic-hyperbolic SPDEs of the form 
\begin{equation}\label{takis10}
du=\partial_{x}(F(\partial_{x}u)) dt +H(\partial_{x}u)\circ{d\xi} \ \ \text{in} \ \ \T\times \R_+,
\end{equation}
 including as a special case
\begin{equation}\label{takis11}
du=\partial_{x}(F(\partial_{x}u)) dt +H(\partial_{x}u)\circ{dB} \ \ \text{in} \ \ \T\times \R_+,
\end{equation}
and prove, exploiting only the fluctuating part $H(\partial_{x}u)\circ{d\xi}$, that solutions converge to constants for large times. In particular, the results also apply to problems with $F\equiv 0$, that is, SPDEs like
\[du=H(\partial_{x}u)\circ{d\xi} \ \ \text{in} \ \ \T\times \R_+.\]

The main focus of this section is on the derivation of quantitative results which lead to improved rates of convergence due to stochasticity in the following sections. 
\vs

We begin with a qualitative convergence result (Theorem~\ref{thm:quantitative}), which can be shown under weaker assumptions than the quantitative statements following below. In this qualitative form and for $F\equiv 0$, this result was first obtained in \cite{Li.So2020b}. 
\vs

Throughout the subsection we assume  that 
\begin{equation}\label{takis40} 
%H\in C^{0,1}_{\blue{loc}}(\R), \ \ 
{  H=H_1-H_2  \text{ for two convex functions } H_1, H_2 : \R \to \R, \mbox{ and } H(0)=0.}
\end{equation}
In addition, in the following result we will require that
\begin{equation}\label{takis41}
  H(p)>0\ \ \text{ for all} \ \ p\in\R\setminus \{0\}.
\end{equation}

\begin{theorem}\label{thm:quantitative}
Assume \eqref{takis40} and \eqref{takis41}, $F\in C^1(\R)$ is non-decreasing, $u_0 \in C(\T)$, $\xi\in C([0,\infty))$ with $\limsup_{t\to \infty} |\xi(t)| = +\infty$, and let  $u$ be  the viscosity solution  to \eqref{takis10} with initial condition $u_0$. Then, there exists a constant $u_\infty$ such  that
$$\lim_{t\to \infty}\|u(\cdot,t) - u_\infty \|=0.$$

If $\xi=\xi(\omega)$ is an a.s.\ unbounded and continuous martingale, then  $\E[u_\infty] = \int_{\T} u_0$.
\end{theorem}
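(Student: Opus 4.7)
The plan is to first establish that $\osc(u(\cdot,t))$ is non-increasing in $t$ and that the family $\{u(\cdot,t)\}_{t\ge 0}$ is relatively compact in $C(\T)$, then to show that the limiting oscillation is zero using the unboundedness of $\xi$. By the comparison principle for \eqref{takis10}—valid because $F$ is non-decreasing and $H$ is continuous—any two solutions driven by the same path $\xi$ satisfy $\|u^1(\cdot,t)-u^2(\cdot,t)\|_\infty\le \|u_0^1-u_0^2\|_\infty$. Applying this with $u^2=u^1+c$ gives that $\osc(u(\cdot,t))$ is non-increasing, and applying it to $u^1(x,t)$ versus $u^1(x+h,t)$ gives that the spatial modulus of continuity of $u(\cdot,t)$ is dominated by that of $u_0$ uniformly in $t$. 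By Arzelà--Ascoli, $\{u(\cdot,t)-\min u(\cdot,t)\}_{t\ge 0}$ is precompact in $C(\T)$, so it suffices to prove $\omega_\infty:=\lim_{t\to\infty}\osc(u(\cdot,t))=0$; once this holds, any subsequential uniform limit is constant, and the monotonicity of the oscillation promotes this to convergence of the whole family to some constant $u_\infty$.

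To show $\omega_\infty=0$, I would argue by contradiction. Assuming $\omega_\infty>0$, choose $t_n\to\infty$ with $u(\cdot,t_n)-\min u(\cdot,t_n)$ converging uniformly to some $v$ with $\osc(v)=\omega_\infty$. Since $\limsup_{t\to\infty}|\xi(t)|=\infty$, select $s_n>0$ so that on $[t_n,t_n+s_n]$ the increment of $\xi$ between its running minimum and running maximum attains an arbitrarily large value $L_n\to\infty$. Applying a path-monotonicity principle in the spirit of Lemma~\ref{lem:inf}, extended to the parabolic--hyperbolic equation \eqref{takis10} (for instance via operator-splitting alternating the $F$- and $H$-flows, each of which is order-preserving with respect to its driving parameter), one should sandwich $u(\cdot,t_n+s_n)$ between compositions of the $F$-semigroup and iterates of the deterministic $H$-semigroup $S_H(\pm L_n)$ applied to the profile $u(\cdot,t_n)$. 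Since $H(p)>0$ for $p\neq 0$, the classical large-time theory for Hamilton--Jacobi equations gives $S_H(L)w\to$ constant as $L\to\infty$ for every $w\in C(\T)$, uniformly over the compact set that contains the $u(\cdot,t_n)$'s. This forces $\osc(u(\cdot,t_n+s_n))\to 0$, contradicting $\omega_\infty>0$. I expect the main technical obstacle to be making this path-monotonicity rigorous for the combined parabolic--hyperbolic flow, since Lemma~\ref{lem:inf} is proved using the Hopf-type representation specific to pure Hamilton--Jacobi equations; the $F$-dissipation, however, can only reinforce (never obstruct) the contraction effected by the $H$-flow, so an approximation argument through vanishing viscosity or splitting should succeed.

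For the martingale assertion, set $m(t):=\int_\T u(x,t)\,dx$. The $F$-contribution to $dm$ vanishes by periodicity, and the $H$-contribution is $Y(t)\circ d\xi(t)$ with $Y(t):=\int_\T H(\partial_x u(x,t))\,dx$. Converting the Stratonovich differential to Itô adds $\tfrac12\,d[Y,\xi]_t$, where the covariation is determined by the $\circ d\xi$-coefficient of $dY$; a direct calculation gives this coefficient as $\int_\T H'(\partial_x u)\,\partial_x H(\partial_x u)\,dx=\int_\T \partial_x\widetilde G(\partial_x u)\,dx$ with $\widetilde G'(p)=H'(p)^2$, which equals $0$ by periodicity. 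Hence $m$ is a continuous local martingale; a standard uniform-integrability argument (using BV-control of $\partial_x u$ inherited from the $L^\infty$-contraction to bound $|Y|$, together with localization through stopping times adapted to $\langle\xi\rangle$) upgrades $m$ to a true martingale, so $\E[m(t)]=m(0)=\int_\T u_0$ for every $t\ge 0$. Since $u(\cdot,t)\to u_\infty$ uniformly, $m(t)\to u_\infty$ almost surely, and dominated convergence yields $\E[u_\infty]=\int_\T u_0$.
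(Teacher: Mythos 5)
Your reductions at the start (monotonicity of the oscillation and of the spatial modulus via comparison, Arzel\`a--Ascoli, and the observation that it suffices to show $\osc(u(\cdot,t))\to 0$) are fine and consistent with what the paper uses. The genuine gap is in the core step, where you propose to sandwich $u(\cdot,t_n+s_n)$ between compositions of the $F$-flow and the deterministic semigroups $S_H(\pm L_n)$ ``in the spirit of Lemma~\ref{lem:inf}''. That lemma, and the path-monotonicity and cancellation properties $S_{-H}(t)S_H(t)\le Id\le S_H(t)S_{-H}(t)$ it rests on, are proved via the control/Hopf--Lax representation and require $H$ to be \emph{convex} (assumption \eqref{takis90}). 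Theorem~\ref{thm:quantitative} only assumes \eqref{takis40}--\eqref{takis41}: $H$ is a difference of two convex functions with $H>0$ off the origin, and for such non-convex $H$ the monotone dependence of $\S_H^{\xi}$ on the path genuinely fails, so the sandwich you want is not available even when $F\equiv 0$. On top of this, your extension of the sandwich to the combined parabolic--hyperbolic flow is only asserted: the statement that ``the $F$-dissipation can only reinforce the contraction'' concerns decay of the oscillation, but what your argument actually needs is an \emph{order comparison} between the stochastic flow with $F$ present and compositions of the deterministic $H$-semigroups, and interleaving $S_F$ destroys the variational cancellation structure; no splitting or vanishing-viscosity argument is given that would recover it. The paper avoids all of this by a different mechanism: entropy inequalities show that $N_i(t)=\int_\T H_i(u_x(\cdot,t))\,dx$ are non-increasing, the pathwise identity $\int_\T u(\cdot,t)-\int_\T u(\cdot,s)=\int_s^t N\,d\xi$ is integrated by parts in the Stieltjes sense, and choosing intervals on which $\xi(t)-\xi(s)$ is large yields $N(t_n)\to0$, hence $\int_\T H(u_x(\cdot,t))\to0$ and, by \eqref{takis41}, $u_x(\cdot,t)\to0$ in measure; this works precisely because it never invokes path-monotonicity or convexity of $H$.

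For the martingale statement your conclusion is correct and close in spirit to the paper's, but your justification is heavier than needed and partly unjustified: computing the Stratonovich--It\^o correction through $dY$ requires a chain rule for entropy solutions (where convex-entropy identities hold only as inequalities across shocks). It is simpler to note, as in the paper, that $N$ is adapted, bounded and of bounded variation, so $\int_0^t N\,d\xi$ is a (local) martingale whose expectation vanishes; since $|\int_\T u(\cdot,t)|\le\|u_0\|$, the local martingale is bounded, and $\E\big[\int_\T u(\cdot,t)\big]=\int_\T u_0$ follows, after which dominated convergence gives $\E[u_\infty]=\int_\T u_0$. This part could be repaired, but the convergence argument itself needs the paper's entropy-based route (or an equivalent substitute valid for non-convex $H$ and nonzero $F$).
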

\begin{proof} 
We first assume that $u_0 \in C^2(\T)$, consider a smooth approximation $\xi^\ep$ of $\xi$  and the unique viscosity solution  $u^\varepsilon$  to
$$u^\varepsilon_t = \partial_{x}(F(u_x^\varepsilon))+H(u_x^\varepsilon) \dot{\xi}^\varepsilon \ \ \text{in} \ \ \T\times \R_+ \quad  u^\ep(0,\cdot) = u_0 \ \ \text{on} \ \ \T,$$ and  recall  that, in view of \cite{LS98},  as $\ep\to 0$ and for all $T>0$,  $u^\varepsilon \to u$ in $C(\T \times [0,T])$.
\vs
In addition, since $H(0)=0$, we also have (see \cite{LS98}), for all $t\in \R_+$,
\begin{equation}\label{takis11}
\|{u^\varepsilon}(\cdot,t)\|\le\|u_0\| \ \ \text{and} \ \   \|u^\varepsilon_x(\cdot,t)\|\le\|u_{0x}\|. 
\end{equation}
 
Next, we observe that 
${v^\varepsilon}={u}_{x}^\varepsilon$ is the entropy solution to 
\[
{v_t^\varepsilon}=\partial_{xx}(F({v^\varepsilon}))+\partial_{x}{H}({v^\varepsilon})\dot{\xi}^\varepsilon \ \ \text{in} \ \ \T\times \R_+ \quad v^\ep(0,\cdot)=u_{0,x} \ \ \text{on} \  \T, 
\] 
and, thus, for any convex and smooth $E:\R\to \R$, we have
\begin{align*}
\dfrac{d}{dt}\int_\T E({v^\varepsilon}) dx & \le -\int_\T E''({v^\varepsilon})F'({v^\varepsilon})({v_{x}^\varepsilon})^{2} dx -\int_\T E''({v^\varepsilon})v^\varepsilon_{x}{H}({v}^\varepsilon)\dot{\xi}^\varepsilon dx\\
 & =-\int_\T E''({v^\varepsilon})F'({v^\varepsilon})({v}_{x}^\varepsilon)^{2} dx\le0.
\end{align*}
By an additional approximation argument, we may assume that $H, H_1$ and $H_2$  in \eqref{takis41}  are smooth, and, hence, using the previous observation with $E=H_1$ and $E=E_2$  yields  that, for  $i=1,2$, 
\begin{equation}\label{eqn:non-decr}
 t \mapsto N^\varepsilon_i(t) := \int    H_i(u_x^\varepsilon(x,t)) dx  \mbox{ is non-increasing}.
\end{equation}
Thus, the map 
$$t \mapsto N^\varepsilon(t) := \int H(u_x^\varepsilon(x,t)) dx = N^\varepsilon_1(t)-N^\varepsilon_2(t)$$
is of bounded variation, and,  in view of \eqref{takis11},  $\|N^\varepsilon\|_{BV}$ is uniformly bounded. It follows that, along subsequences, $N^\varepsilon \rightharpoonup N$ weak-$\star$ in $BV_{loc}(\R_+)$ for some $N\in BV_{loc}(\R_+)$.
\vs 
We also know that, for all $t\in \R_+$,
\begin{equation}\label{eqn:veps_bounds}
  \|v^\ve(\cdot,t)\| \le \|v_0\| \quad\text{and } \quad\|v^\ve(t)\|_{BV} \le \|v_0\|_{BV}. 
\end{equation}
\vs

Then the Aubin-Lions-Simon Lemma (see  Simon \cite[Theorem 5]{S87}) together with the equicontinuity of the $u^\ve$'s imply that $v^\ve \to v=u_x$ in $C_{loc}(\R_+;L^1(\T))$.
\vs

It follows from the   dominated convergence theorem that
\[
\begin{split}
  & N^\varepsilon(t) \to N(t)=\int_\T H(u_x(x,t)) dx  \ \ \  \text{and} \\[1.2mm]
   & N^\varepsilon_1(t) \to N_1(t) = \int_\T    H_1(u_x(x,t)) dx  \ \text{and} \ N^\varepsilon_2(t) \to N_2(t) = \int_\T    H_2(u_x(x,t)) dx, 
\end{split}
\]
and, finally, in view of \eqref{eqn:non-decr} and \eqref{eqn:veps_bounds},  $N_1$ and $N_2$ are both non-increasing and bounded.
\vs

A standard approximation argument  yields 
\begin{equation}\begin{split}\label{eqn:average-dyn}
\int_\T u^\varepsilon(x,t) dx  - \int_\T u^\varepsilon(x,s)  dx
& = \int_s^t \left( \int_\T H(u^\varepsilon_x(x,s)) dx\right) d\xi^\varepsilon_s= \int_s^t N^\varepsilon d\xi^\varepsilon  \\
&= N^\varepsilon(t)\left( \xi^\varepsilon(t) - \xi^\varepsilon(s)\right) - \int_s^t (\xi^\varepsilon(u)-\xi^\varepsilon(s)) dN^\varepsilon(u),
\end{split}\end{equation}
where the right hand side is a Stieltjes integral. 
\vs
In view of the convergences in $\varepsilon$ shown above, we can pass to the limit $\ep \to 0$ in \eqref{eqn:average-dyn}  to obtain that \begin{equation}\begin{split} \label{eqn:average-dyn-2}
\int_\T u(x,t) dx  - \int_\T u(x,s)  dx
&= N(t)\left( \xi(t) - \xi(s)\right) - \int_s^t (\xi(u)-\xi(s)) dN(u).
\end{split}\end{equation}

If $t>0$ and $s\in [0,t] $ are such that $\xi$ achieves its minimum on $[s,t]$ at $s$ and its maximum at $t$, we deduce from \eqref{eqn:average-dyn-2} that
\begin{equation}\begin{split}\label{eqn:N-bound}
N(t)  &= \dfrac{\int_\T u(x,t) dx  - \int_\T u(x,s) dx}{\xi(t)-\xi(s)} + \int_s^t \frac{\xi(u)- \xi(s)}{\xi(t)-\xi(s)} dN(u) \\[1.5mm]
& \leq \dfrac{\max u_0 - \min u_0}{\xi(t)-\xi(s)}  + N_2(s) - N_2(t).
\end{split}\end{equation}

Finally, since $N^i(t)$ is non-increasing and bounded, the limits  
$$N_\infty := \lim_{t\to\infty}N(t) = \lim_{t\to\infty}N^1(t)-\lim_{t\to\infty}N^2(t)$$
exist.
\vs
In view of  the fact that  $\limsup_{t\to \infty} |\xi(t)| = +\infty$, \eqref{eqn:N-bound} implies that there is a sequence $t_n \to \infty$ along which  $N(t_n) \to 0$. 
\vs

Hence, $N_\infty=0$ and 
\begin{equation}\label{eqn:H-conv}
\lim_{t\to \infty}  \int_{\T} H(u_x(t)) dx =0,
\end{equation}
which, in view of \eqref{takis41}, 
implies that
\begin{equation}\label{takis20}
u_x(\cdot,t) \underset{t\to \infty} \to 0 \ \  \text{in measure.}
\end{equation}

\vs
The equicontinuity and equiboundedness of  the family $(u(\cdot,t))_{t\geq 0}$ yields that, along  subsequences $t_n\to \infty$, $u(\cdot, t_n) \to u_\infty$ uniformly  for some bounded and  Lipschitz continuous $u_\infty$. 
\vs

It follows from  \eqref{takis20}  that  $u_{\infty, x}=0$ a.e.\,which implies that $u_\infty$ is a constant.
\vs

Since  \eqref{takis10} admits a comparison principle,  the constant  $u_\infty$  is independent  of the chosen subsequence and thus 
\[\lim_{t\to \infty} \|u(\cdot,t)-u_\infty\|=0.\] 
\vs

In addition, if $\xi=\xi(\omega)$ is given as paths of an a.s.\ unbounded, continuous martingale,  then, taking expectations in \eqref{eqn:average-dyn-2} gives 
\begin{equation*}\begin{split}
\E\int_\T u(x,t) dx  - \int_\T u_0(x) dx = 0,
\end{split}\end{equation*}
and the claim follows after letting $t \to \infty$.

\vs

It remains to remove the assumption that $u_0$ is smooth. Indeed, given
$u_0 \in \blue{C(\T)}$,  choose $u_0^{\pm,\ve} \in C^2(\T)$ such that  
$$\lim_{\ep\to 0}\|u_0^{\pm,\ve} - u_0\|=0, \ \ \|u_0^{+,\ve}-u_0^{-,\ve}\| \le \ve \ \ \text{and} \ \ u_0^{+,\ve} \ge u_0 \ge u_0^{-,\ve}.$$

It follows from the comparison principle for \eqref{takis10} and the asymptotic behavior of the solutions for smooth data that there are constant $u^{\pm,\ve}_\infty$ such that 
$$u^{+,\ve}(\cdot,t) \ge u(\cdot,t) \ge u^{-,\ve}(\cdot,t) \ \ \text{ and, as $t\to \infty$}, \ \  
u^{\pm,\ve}(\cdot,t) \to 
u^{\pm,\ve}_\infty.$$ 

Since $$\|u^{+,\ve}(\cdot,t)-u^{-,\ve}(\cdot,t)\|\le\|u_0^{+,\ve}-u_0^{-,\ve}\|\le \ve,$$
we have   $$|u^{+,\ve}_\infty-u^{-,\ve}_\infty| \le\ve$$ and  the claim follows.

\end{proof}

\begin{remark} In the setting of Theorem \ref{thm:quantitative} with  $\xi(\omega)$ an a.s.\ unbounded, continuous martingale, there is, in general, no additional information on $u_\infty$, which as shown in \cite{Li.So2020b} can be random. However, when  $H(p)=|p|$ and $F\equiv 0$, if $u$ solves \eqref{takis10}, then,  for any non-decreasing and  continuous $\varphi$, $\varphi (u)$ is also a solution, and, hence, 
$$\E[\varphi(u_\infty)] = \int_{\T} \varphi(u_0)dx,$$
a fact which implies that $Law(u_\infty) = (u_0)_*dx$.
\end{remark}
The next result is a quantitative version of Theorem~\ref{thm:quantitative}. For this, we assume that 
\begin{equation}\label{takis43}
\text{ there exists $\alpha \in (0,1]$ such that $H\geq \alpha H_1$,}
\end{equation}
a fact which  can be seen as a way to quantify the growth of the Hamiltonian $H$ away from its minimum $H(0)=0$. 
\vs

\begin{theorem}\label{thm:quantified_difference_convex}
Assume  \eqref{takis40}, \eqref{takis41}, \eqref{takis43}, $F\in C^1(\R)$ is non-decreasing, $u_0 \in \blue{C(\T) \cap W^{1,1}(\T)}$, $\xi\in C([0,\infty))$,  and let  $u$ be the  solution  to \eqref{takis10} with initial condition $u_0$. Then, there exists $C=C(\alpha)>0$ such that, for all $T\ge0$,
\begin{equation}
\displaystyle{\int_{\T} }H_1(u_x(x,T))\,dx \leq C \dfrac{\max u_0 - \min u_0}{\max_{t \in[0,T]} (\xi(t) - \min_{[0,t]} \xi)}.
\end{equation}
If, moreover, $H_1$ is even, then, for all $T\ge0$,
\begin{equation}
\displaystyle{\int_{\T}} H_1(u_x(x,T)) dx \leq C \dfrac{\max u_0 - \min u_0}{\max_{[0,T]}\xi - \min_{[0,T]} \xi}.
\end{equation}
\end{theorem}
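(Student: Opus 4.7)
The plan is to build on the identity proved along the way to Theorem~\ref{thm:quantitative},
\[
\int_\T u(\cdot,t)\,dx - \int_\T u(\cdot,s)\,dx = N(t)\bigl(\xi(t)-\xi(s)\bigr) - \int_s^t\bigl(\xi(u)-\xi(s)\bigr)\,dN(u),
\]
where $N_i(t)=\int_\T H_i(u_x(\cdot,t))\,dx$, $N=N_1-N_2$, both $N_1$ and $N_2$ are non-increasing in $t$, and the comparison principle together with $H(0)=0$ (so constants are solutions) forces $u(x,t)\in[\min u_0,\max u_0]$, whence $|\int u(t)-\int u(s)|\le \osc(u_0)$.

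For $s<t$ in $[0,T]$ with $\xi(s)=\min_{[s,t]}\xi$ and $\xi(t)=\max_{[s,t]}\xi$, I set $\Delta=\xi(t)-\xi(s)>0$ and divide the identity by $\Delta$. Exactly as in the derivation of \eqref{eqn:N-bound}, the integral term is controlled by $N_2(s)-N_2(t)$, using that the weight $(\xi(u)-\xi(s))/\Delta$ lies in $[0,1]$ together with $dN_1\le 0$ and $-dN_2\ge 0$. This yields $N_1(t)-N_2(t)\le \osc(u_0)/\Delta+N_2(s)-N_2(t)$, so $N_1(t)\le \osc(u_0)/\Delta+N_2(s)$. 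Assumption~\eqref{takis43} is equivalent to $H_2\le(1-\alpha)H_1$ pointwise, hence $N_2\le(1-\alpha)N_1$, and the core recursion emerges:
\[
N_1(t)\le \frac{\osc(u_0)}{\Delta}+(1-\alpha)\,N_1(s).
\]

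To bound $N_1(T)$ I choose $t_0$ realizing $D=\xi(t_0)-\min_{[0,t_0]}\xi$ and $s_0\in[0,t_0]$ with $\xi(s_0)=\min_{[0,t_0]}\xi$. Maximality of $t_0$ forces $\xi(t_0)=\max_{[s_0,t_0]}\xi$ (otherwise some later $\xi(u)>\xi(t_0)$ would produce a drawup exceeding $D$), so the recursion applies with $\Delta=D$; monotonicity of $N_1$ then gives $N_1(T)\le N_1(t_0)\le \osc(u_0)/D+(1-\alpha)N_1(s_0)$. The plan is next to iterate backward in time: at step $k$, pick on $[0,s_k]$ the next $\xi$-extremum of the opposite type to the one at $s_k$, apply the recursion, and unroll to obtain $N_1(T)\le\sum_{k\ge 0}(1-\alpha)^k\osc(u_0)/\Delta_k$, with $\Delta_0=D$. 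The hard part will be absorbing this tail into $C(\alpha)\osc(u_0)/D$ independently of the intermediate oscillations $\Delta_k$: although these may be much smaller than $D$, the geometric weight $(1-\alpha)^k$ combined with the monotonicity of $N_1$ along the nested sequence of subintervals should produce a constant $C(\alpha)$ of order $1/\alpha$. The potential non-finiteness of $N_1(0)$ when $u_0\in W^{1,1}$ only would be dealt with by running the argument first on smooth $u_0$ and piecewise linear $\xi$ and then passing to the limit, exploiting that the target bound depends on $u_0$ only through $\osc(u_0)$ and using the continuity of the solution map from Section~\ref{qc}.

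For the second claim, observe that the above recursion is symmetric in the ordering of $\min$ and $\max$ at the endpoints of $[s,t]$: a parallel derivation with $\xi(s)=\max_{[s,t]}\xi$ and $\xi(t)=\min_{[s,t]}\xi$ gives the same inequality with $\Delta$ replaced by $|\xi(t)-\xi(s)|$. When $H_1$ is even, both the ``up'' and ``down'' versions of the recursion enter the backward iteration interchangeably, so I may choose the initial pair $(s_0,t_0)$ to realize $\min_{[0,T]}\xi$ and $\max_{[0,T]}\xi$ in whichever temporal order they occur, producing $|\Delta|=\max_{[0,T]}\xi-\min_{[0,T]}\xi$ at the first step; the backward iteration then closes exactly as in the first claim.
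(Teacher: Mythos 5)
Your derivation of the one-step inequality $N_1(t)\le \frac{\osc(u_0)}{\Delta}+(1-\alpha)N_1(s)$ is exactly the paper's estimate \eqref{eq:estN1N2}, obtained in the same way from \eqref{eqn:N-bound}, the monotonicity of $N_1,N_2$, and $N_2\le(1-\alpha)N_1$ coming from \eqref{takis43}. The genuine gap is in how you close the iteration, which is precisely the nontrivial part of the theorem. In your backward scheme the intervals, hence the quantities $\Delta_k$ and the number $K$ of steps, are dictated by the path; unrolling gives $N_1(T)\le\sum_{k<K}(1-\alpha)^k\,\osc(u_0)/\Delta_k+(1-\alpha)^K N_1(0)$, and this cannot be absorbed into $C(\alpha)\,\osc(u_0)/D$. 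Concretely, if $\xi$ is constant (or monotone) on $[0,s_0]$ and carries its entire drawup $D$ on $[s_0,T]$, your iteration stops after one step and leaves the term $(1-\alpha)N_1(0)$; since $N_1(0)=\int_\T H_1(u_{0,x})\,dx$ can be made arbitrarily large (even infinite for $u_0\in W^{1,1}$ when $H_1$ is superlinear) while $\osc(u_0)$ and $\xi$ are held fixed, no bound of the claimed form can follow. More generally, any scheme in which the number of contraction steps and the interval sizes depend only on $\xi$ necessarily retains an uncontrolled multiple of $N_1(0)$, so the "geometric weight plus monotonicity" mechanism you invoke cannot produce a constant depending only on $\alpha$.

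The paper's resolution, which is the idea missing from your plan, is to calibrate the subdivision to $N_1(0)$ rather than to the path's excursions: fix $\eta>0$ with $\beta:=1-\alpha+\eta<1$ and choose consecutive intervals $I_k\subset[0,T]$ whose drawups are prescribed to equal $\osc(u_0)/(\eta\,\beta^{k-1}N_1(0))$ (possible by continuity of the drawup and its superadditivity over partitions). Then each application of \eqref{eq:estN1N2} contracts, $N_1(t_{k+1})\le(\eta+1-\alpha)\beta^{k-1}N_1(0)=\beta^{k}N_1(0)$, so $N_1(T)\le\beta^N N_1(0)$; taking $N$ maximal subject to the total-drawup constraint \eqref{eqn:bound} (and treating separately the trivial case where not even one calibrated interval fits, where $N_1(T)\le N_1(0)$ already gives the bound) converts $\beta^N N_1(0)$ into $C\,\osc(u_0)/\max_{t\in[0,T]}\bigl(\xi(t)-\min_{[0,t]}\xi\bigr)$, uniformly in $N_1(0)$. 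Note this can be carried out inside the single interval carrying the large drawup, so forward versus backward is not the issue; the issue is that the drawup sizes must grow geometrically starting from a level of order $\osc(u_0)/N_1(0)$. Your remaining ingredients — the bound $|\int_\T u(t)-\int_\T u(s)|\le\osc(u_0)$, the approximation step (the paper mollifies $u_0$, gets $u^\ve_x\to u_x$ in $C([0,T];L^1)$ and concludes by convexity of $H_1$ and Fatou), and the even case (which the paper handles by a symmetry yielding a drawdown bound on $\int_\T H_1(-u_x(T))\,dx$, close in spirit to your "down" recursion) — are consistent with the paper, but they all hinge on first closing the iteration correctly.
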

\begin{proof}
As in the proof of Theorem~\ref{thm:quantitative} we first assume that $u_0 \in C^2(\T)$. 
\vs

Fix an arbitrary time interval $[s_0,t_0]$ and let $s,t\in [s_0,t_0]$ be such that  $s\leq t$ and 
  $$ \max_{t \in [s_0,t_0]} (\xi(t) - \min_{[s_0,t]} \xi) = \xi(t)-\xi(s).$$
Using an argument similar to the one leading to \eqref{eqn:N-bound}  and recalling that $dN = dN^1 - dN^2$ with $N^1$ 
and $N^2$ non-increasing, we get 
\begin{align*}
N(t)  & \leq \dfrac{\max u_0 - \min u_0}{\xi(t)-\xi(s)}  + N_2(t) - N_2(s)\leq  \frac{\max u_0 - \min u_0}{\xi(t)-\xi(s)}  + N_2(s) - N_2(t).
\end{align*}
Since $N_1$ is non-increasing and,   in view of \eqref{takis43}, $N_2 \leq (1-\alpha) N_1$,  we finally get 
\begin{equation} \label{eq:estN1N2}
N_1(t_0) \leq N_1(t) \leq\frac{\max u_0 - \min u_0}{\max_{t \in [s_0,t_0]} (\xi(t) - \min_{[s_0,t]} \xi)} + (1-\alpha) N_1(s).
\end{equation}
\vs
To obtain the general estimate, we first notice that, without loss of generality, we can choose $\eta>0$ such that $\beta:=1-\alpha+ \eta < 1$ and $N\in\N$ maximal so that
\begin{equation}\label{eqn:bound}
  \max_{t \in[0,T]} (\xi(t) - \min_{[0,t]} \xi) \geq  (\max u_0 - \min u_0) \dfrac{1}{N_1(0) \eta} \sum_{k=1}^N \beta^{-k}. 
\end{equation}
Otherwise, 
\[ N_1(T) \le N_1(0)  \leq  \dfrac{1}{\eta \beta} \frac{\max u_0 - \min u_0}{\max_{t \in[0,T]} (\xi(t) - \min_{[0,t]} \xi)} \]
and nothing remains to be shown.
\vs
Next, notice that we can choose a partition of $[0,T]$ consisting of intervals $I_k=[t_k,t_{k+1}]$, with $k=1,\ldots, N$, such that 
$$\sum_{k=1}^N \max_{t \in [t_k,t_{k+1}]} (\xi(t) - \min_{[t_k,t]} \xi) \ge  \max_{t \in[0,T]} (\xi(t) - \min_{[0,t]} \xi)$$
and, in view of \eqref{eqn:bound}, %such that
$$\frac{\max u_0 - \min u_0}{\max_{I_k} \xi - \min_{I_k} \xi} = \eta (1-\alpha+\eta)^{k-1} N_1(0).$$

Applying \eqref{eq:estN1N2} sequentially on the intervals $I_k$ we obtain that 
\[N_1(T) \leq  \beta^{N} N_1(0).\]

Since $N$ is chosen maximal for \eqref{eqn:bound} we have, for some constant $C=C(\beta)>0$,
\begin{equation*} %\label{eqn:bound}
  \max_{t \in[0,T]} (\xi(t) - \min_{[0,t]} \xi)  \le C \dfrac{\max u_0 - \min u_0} {N_1(0) \eta \beta^{N}}
\end{equation*}
and we conclude that
\[N_1(T) \leq  \beta^{N} N_1(0)\;\leq C \dfrac{\max u_0 - \min u_0}{\max_{t \in[0,T]} (\xi(t) - \min_{[0,t]} \xi)}.\]

To obtain the result for $u_0 \in \blue{C(\T) \cap W^{1,1}(\T)}$, we approximate $u_0$ by smooth functions $u_0^\ve$ obtained by mollification. Since, as $\ep\to 0$,  $u_0^\ve \to u_0$ in $W^{1,1}$, we have that $u^\ve \to u$ uniformly and $u^\ve_x \to u_x$ in $C_t(L^1_x)$. Then the statement follows from convexity of $H_1$ and Fatou's Lemma.
\vs

For the proof of the last claim, we observe that, by symmetry, we also have the estimate
\begin{equation}
\int H_1(- u_x(T)) dx \leq C \dfrac{\max u_0 - \min u_0}{\max_{t \in[0,T]} ( - \xi(t) + \max_{[0,t]} \xi)},
\end{equation} 
which implies the estimate in case $H_1$ is even.
\end{proof}

Next, we discuss a number of concrete examples.

\begin{example}
We consider the $2-$d stochastic mean curvature flow in graph form perturbed  by  homogeneous noise
\begin{equation}\label{eqn:fluc_interface}
%\partial_{t}
du= \delta  \frac{\partial_{xx}u}{1+(\partial_{x}u)^{2}} dt +\sqrt{1+(\partial_{x}u)^{2}}\circ{dB},
\end{equation}
with initial condition  $u_0 \in \blue{C(\T) \cap W^{1,1}(\T)}$, and $B$ Brownian motion. 
A simple calculation shows that \eqref{eqn:fluc_interface} can be recast as \eqref{takis10} with $F(p)=\delta \arctan(p)$ and $H(p)=\sqrt{1+|p|^2}$. 
\vs

The only issue with  applying Theorem~\ref{thm:quantified_difference_convex} is that $H(0)=1$. This can be resoved by an elementary change of unknown: Indeed, it is immediate that  $\tilde u := u-B$ solves
\[d {\tilde u}= \delta  \frac{\partial_{xx} \tilde u}{1+(\partial_{x} \tilde u)^{2}} dt +(\sqrt{1+(\partial_{x}\tilde u)^{2}}-1)\circ{dB}\]
while $\tilde u_x=u_x$. 
Now $H(p)=\sqrt{1+p^{2}}-1$ satisfies all the assumptions of Theorem~\ref{thm:quantified_difference_convex}. 
Since the conclusions of the theorem concern $\tilde u_x$, next we write them using $u_x$.
\vs

It follows that 
\begin{equation}
\int_\T (\sqrt{1+u_x^{2}}-1)dx \leq C \dfrac{\max u_0 - \min u_0}{\max_{t \in[0,T]} B- \min_{[0,T]} B}
\end{equation}
and, thus, 
\[
\|u(T)-\int_{\T}u(T)dx\|_{\infty}\le C \dfrac{\max u_0 - \min u_0}{\max_{t \in[0,T]} B- \min_{[0,T]} B}.
\]
The last estimate is pathwise and quantified and  improves  \cite[Proposition 4.1]{ER12}, which proved the qualitative convergence in mean $\E\|u(T)-\int_{\T}u(T)dx\|_{H^1}^2\to 0$.
\end{example}

\begin{example}
We consider the Ohta-Kawasaki equation with spatially homogeneous noise (see, for example, 
Kawasaki and Ohta \cite{KO82} and Katsoulakis and Kho \cite{KK01}) 
\begin{equation}\label{eqn:OK}
du=\delta\,\frac{\partial_{xx}u}{1+(\partial_{x}u)^{2}}dt +(1+(\partial_{x}u)^{2})^{\frac{1}{4}}\circ{dB}
\end{equation}
with $\delta\ge0$, and Lipschitz continuous initial condition $u_0$, which, as per the discussion in the previous example, can be transformed to 
\[d {\tilde u}= \delta  \frac{\partial_{xx} \tilde u}{1+(\partial_{x} \tilde u)^{2}} dt +(1+(\partial_{x}\tilde u)^2)^{\frac{1}{4}}-1)\circ{dB}\] without changing the $x-$derivative.  
\vs
It is immediate that  $H(p)=(1+p^2)^{\frac{1}{4}}-1$ satisfies \eqref{takis40}. 
\vs
Moreover, 
\begin{align*}
  H'(p) &= \frac{1}{2}(1+p^{2})^{-\frac{3}{4}}p, \quad
  H''(p) = \frac{1}{2}(1+p^{2})^{-\frac{3}{4}}-\frac{3}{4}(1+p^{2})^{-\frac{7}{4}}p^2 =: H_1''-H_2'',
\end{align*}
with
\begin{align*}
 H_1(p):=\int_0^p \int_0^{p_1} \frac{1}{2}(1+p_2^{2})^{-\frac{3}{4}}\,dp_2\,dp_1
\end{align*} 
and, since $H_1'' \ge 0$ and $H_2'' \ge 0$ , it follows  that $H=H_1-H_2$ is the difference of two convex functions. 
\vs
Finally, to check \eqref{takis43} we  observe that, for  $|p|\le K$, there exists  $\eta\in(0,1)$ so that $H''\ge\eta H_1''$. Moreover, there is a constant $C$ such  that $$H(p)\ge C \dfrac{1}{1+K^{3/2}} |p|^2.$$
Since $\|u_x(\cdot,t)\|\le\|u_{0,x}\|$, 
it follows that we can thus apply Theorem \ref{thm:quantified_difference_convex}.
\vs
Hence, there is a constant $C>0$, which may depend on $\|u_0\|_{Lip}$ but not on $\delta$ such  that 
\begin{equation}
\|u(\cdot,T)-\int_{\T}u(x, T)dx\|_{\infty} \leq  \frac{C}{\osc_{0,T}B}.
\end{equation}
\end{example}

\begin{remark}
As seen in the two examples with $\delta=0$, in contrast to  \cite[Proposition 4.1]{ER12}, the proof given here does not rely on the (degenerate) viscosity of the mean curvature operator, but exploits exclusively the convexity of the Hamiltonian in the stochastic term. 
\end{remark}
\vs

Next we show that iterating the estimate in Theorem~\ref{thm:quantified_difference_convex} allows to improve the dependency on the initial condition and the speed of convergence in $T$. 
\vs
In preparation, given a path $\xi : [0,T] \to \R$ and $C>0$, we define the times  $\tau^C_k$ by 
\begin{equation} \label{eq:defTauC}
\tau^C_0 := 0, \;\; \; \tau^C_{k+1} := \inf\left\{ t \geq \tau_k^C, \;\;\; \osc(\xi)_{\tau_k^C,t}=C\right\} \wedge T, 
\end{equation}
and 
\begin{equation} \label{eq:defNC}
  N^C_{0,T}(\xi) := \sup \{k  \geq 0: \tau^C_k < T\},
\end{equation}
 where $\osc(\xi)_{s,t}$ denotes the oscillation of $\xi$ on the time interval $[s,t]$. 
\vs
We have the following auxiliary lemma.

\begin{lemma}\label{lem:lln}
   Let $B$ be a Brownian motion. Then $N^C_{0,T}(B)\underset{T\to \infty} \sim c C^{-1/2}  T$ almost surely, where $c =\E[\tau_1^1] < \infty$.
\end{lemma}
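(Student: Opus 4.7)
The plan is classical renewal theory combined with Brownian scaling. Consider the stopping times $\tau_k^C$ defined on the whole half-line (ignoring the $\wedge T$ truncation), so that $\tau_k^C < \infty$ almost surely for each $k$. By the strong Markov property of $B$ at each $\tau_k^C$, the increments $\Delta_k := \tau_{k+1}^C - \tau_k^C$ are independent and each distributed as $\tau_1^C$. Indeed, $\Delta_k$ is a function of the shifted path $\bigl(B(\tau_k^C + t) - B(\tau_k^C)\bigr)_{t \geq 0}$, which by the strong Markov property is a Brownian motion independent of the pre-$\tau_k^C$ filtration, and whose oscillation on $[0,s]$ coincides with $\osc(B)_{\tau_k^C,\,\tau_k^C+s}$.

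Next, Brownian scaling pins down the $C$-dependence: the process $\tilde B(s) := C^{-1} B(C^2 s)$ is again a standard Brownian motion satisfying $\osc(\tilde B)_{0,s} = C^{-1}\osc(B)_{0,\,C^2 s}$, so $\tau_1^C \stackrel{d}{=} C^2\,\tau_1^1$. To justify applying the strong law of large numbers, I verify that $c = \E[\tau_1^1] < \infty$ by dominating $\tau_1^1$ by the first exit time $\sigma$ of $B$ from the interval $(-1,1)$: at time $\sigma$ one has $|B(\sigma)-B(0)|=1$, hence $\osc(B)_{0,\sigma} \geq 1$ and $\tau_1^1 \leq \sigma$ almost surely, while $\E[\sigma]=1$ via the standard Dirichlet problem for $\tfrac12 \Delta$ on $(-1,1)$.

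Combining these two steps, the strong law of large numbers applied to the i.i.d.\ sequence $(\Delta_k)$ gives $\tau_n^C/n \to \E[\tau_1^C] = c\,C^2$ almost surely. The standard renewal-counting inversion --- encoded in the sandwich $\tau_{N^C_{0,T}(B)}^{C} \leq T < \tau_{N^C_{0,T}(B)+1}^{C}$ --- then transfers this into the claimed linear-in-$T$ asymptotic for $N^C_{0,T}(B)$ as $T \to \infty$. The only mildly subtle point in the argument is the independence of the increments $\Delta_k$, but this is an immediate consequence of the strong Markov property, so I do not anticipate any real obstacle in carrying out the proof.
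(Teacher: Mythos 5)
Your argument is correct and follows essentially the same route as the paper's proof: strong Markov property to get i.i.d.\ increments $\tau^C_{k+1}-\tau^C_k$, Brownian scaling to reduce to $\tau^1_1$, the strong law of large numbers, and the renewal inversion $\tau^C_{N}\leq T<\tau^C_{N+1}$ (the paper leaves the last step and the finiteness of $\E[\tau^1_1]$ implicit, which you spell out). One remark: your scaling $\tau^C_1\stackrel{d}{=}C^2\,\tau^1_1$ is the correct one, so what you actually prove is $N^C_{0,T}(B)\sim T/(C^2\,\E[\tau^1_1])$; the exponent $C^{-1/2}$ and the placement of $c$ in the lemma's statement (and the $C^{1/2}$ in the paper's proof) are typos, and the discrepancy is harmless for the way the lemma is used later, where only the a.s.\ linear growth of $N^C_{0,T}(B)$ in $T$ for fixed $C$ matters.
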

\begin{proof}
Note that by the strong Markov property, $(\tau^C_{n+1} - \tau^C_{n})_{n \geq 0}$ are i.i.d., and, in view of the Brownian scaling,   equal in law to $C^{1/2} \tau^1_1$. Then the strong law of large numbers implies that, a.s., 
\[ \tau^C_N \sim_{N \to \infty} N C^{1/2} \E[ \tau^1_1], \]
which yields the result.
\end{proof}

Next we state the following improvement of Theorem~\ref{thm:quantified_difference_convex} for which we need to quantify the growth of $H$ by assuming  that
\begin{equation}\label{eq:Hp}
\text{there exist  $C_1>0$, $C_2\geq 0$ 
and $q \geq1$ such  that, for all $p\in\R$,}\ \ 
 H_1(p) \geq C_1 \left( |p|^q - C_2 \right). 
\end{equation}

{ 
Recall that the total variation of a function $u :\T \to \R$ is defined by
\[ \|u\|_{{TV}} = \sup \left\{ \int_{\T} u \phi_x, \;\; \phi \in C^1(\T), \| \phi\|_{\infty} \leq 1\right\}, \]
and that it coincides with $\|u_x \|_{L^1}$ when $u \in W^{1,1}(\T)$.
}
%We recall the definition of $\displaystyle{\int_{\T} }H_1(u_x(x,T))\,dx$ for $u\in BV(\T)$ from \cite{}.
%
%\vs

\begin{theorem} \label{thm:quantifiedbetter} 
Assume  \eqref{takis40}, \eqref{takis43},  \eqref{eq:Hp}, $F\in C^1(\R)$ non-decreasing, $u_0 \in \blue{C(\T)}$, and $\xi\in C([0,\infty))$ with $\limsup_{t\to \infty} |\xi(t)| = +\infty$,  and let  $u$ be the  solution  to \eqref{takis10} 
with initial condition $u_0$. There exist $c,C>0$ depending only on $q$ and $\alpha$ such that, if $q=1$, then, for all $T>0$,  
\begin{equation} \label{eq:oscuT1}
{ \|u(\cdot, T)\|_{{TV}}} \leq e^{-c  N^{C/ C_1}_{0,T}(\xi)} \osc(u_0) +  C C_2,
\end{equation}
and, if $q>1$, 
\begin{equation} \label{eq:oscuT2}
\|u_x(\cdot, T)\|_{L^q}  \leq C \left( \left(C_1 \osc_{0,T}\xi \right)^{-\frac{1}{q-1}} + C_2^{1/q} \right).
\end{equation}
Notably, the latter bound is uniform with respect to the initial condition.
\end{theorem}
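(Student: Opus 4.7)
The plan is to iterate the one-step bound of Theorem~\ref{thm:quantified_difference_convex} on well-chosen subintervals, using the growth condition \eqref{eq:Hp} to pass between $\int_{\T} H_1(u_x)\,dx$ and $\|u_x\|_{L^q}$, and the Poincaré-type inequality $\osc u \leq \|u_x\|_{L^1} \leq \|u_x\|_{L^q}$ valid on $\T$ (unit measure) for $q \geq 1$. Writing $M_{s,t} := \max_{u \in [s,t]}(\xi(u) - \min_{[s,u]}\xi)$ for the maximal-upswing quantity that appears in Theorem~\ref{thm:quantified_difference_convex}, the template estimate for $s \leq t$ reads
\[
C_1\,\|u_x(\cdot,t)\|_{L^q}^q \;\leq\; \int_{\T} H_1(u_x(\cdot,t))\,dx + C_1 C_2 \;\leq\; \frac{C(\alpha)\,\osc u(\cdot,s)}{M_{s,t}} + C_1 C_2,
\]
which is the basic building block for both cases.

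For $q=1$, I would fix $C := 2C(\alpha)$ and apply the template on each oscillation interval $I_k := [\tau_k^{C/C_1},\tau_{k+1}^{C/C_1}]$. On the ``ascending'' intervals, where $M_{I_k}\xi \geq C/C_1$, combining the template with $\osc u(\tau_k) \leq \|u_x(\tau_k)\|_{L^1}$ gives the contraction $\|u_x(\tau_{k+1})\|_{L^1} \leq \tfrac{1}{2}\|u_x(\tau_k)\|_{L^1} + C_2$; on the remaining intervals, the monotonicity $t \mapsto \int H_1(u_x)\,dx$ established in the proof of Theorem~\ref{thm:quantitative} prevents any growth. Unrolling the resulting geometric recursion across all $N^{C/C_1}_{0,T}(\xi)$ intervals yields the claimed $e^{-cN^{C/C_1}_{0,T}(\xi)}\,\osc u_0$, with the accumulated inhomogeneity summing to $O(C_2)$. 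Note that the very first application uses $\osc u_0$ directly and so covers merely continuous initial data without further approximation.

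For $q>1$, the goal is a bound uniform in $u_0$. Setting $f_k := \|u_x(t_k)\|_{L^q}^q$ on a partition $0 = t_0 < \cdots < t_K = T$ chosen so that $M_{t_{k-1},t_k}\xi$ saturates a fixed fraction of $\osc_{0,T}\xi$, the template becomes the scalar sub-linear recursion
\[
f_{k+1} \;\leq\; \frac{C(\alpha)}{C_1 M_{t_k,t_{k+1}}\xi}\, f_k^{1/q} + C_2.
\]
Because $1/q < 1$, this recursion contracts doubly-exponentially toward its fixed point, which is of order $M^{-q/(q-1)} + C_2$: after $K$ iterations, the memory of $f_0$ decays as $f_0^{1/q^K}$ and becomes negligible uniformly in $f_0$. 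Taking $q$-th roots gives $\|u_x(T)\|_{L^q} \leq C\,((C_1\,\osc_{0,T}\xi)^{-1/(q-1)} + C_2^{1/q})$, establishing \eqref{eq:oscuT2}.

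The main obstacle is controlling the ``non-ascending'' intervals in the $q=1$ argument, on which $M$ can be arbitrarily small compared to $C/C_1$ and the template is not directly useful. The key observation is that the entropy identity applied to the convex functions $H_1$ and $H_2$ yields the monotonicity of \emph{both} $N_1(t) = \int H_1(u_x)\,dx$ and $N_2(t) = \int H_2(u_x)\,dx$, as in Theorem~\ref{thm:quantitative}; this prevents the $L^1$-norm from growing on the non-ascending intervals, so the gains accumulated on the ascending ones are not destroyed. The remaining technical points—in particular, density of smooth initial data for the closing approximation—are handled exactly as in the proof of Theorem~\ref{thm:quantified_difference_convex}.
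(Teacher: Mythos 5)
Your overall strategy (iterating Theorem~\ref{thm:quantified_difference_convex} on subintervals and converting between $\int H_1(u_x)$ and $\|u_x\|_{L^q}$ via \eqref{eq:Hp}) is the paper's, but as written the argument does not prove the stated bounds, because your template only involves the one-sided ``upswing'' $M_{s,t}=\max_{r\in[s,t]}\bigl(\xi(r)-\min_{[s,r]}\xi\bigr)$, whereas \eqref{eq:oscuT1} and \eqref{eq:oscuT2} are phrased in terms of two-sided quantities ($N^{C/C_1}_{0,T}(\xi)$ counts intervals of \emph{oscillation} $C/C_1$, and \eqref{eq:oscuT2} involves $\osc_{0,T}\xi$). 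Take $\xi$ monotone decreasing on $[0,T]$: every upswing on every subinterval vanishes, so all your intervals are ``non-ascending'' and your scheme produces no contraction at all, while $N^{C/C_1}_{0,T}(\xi)$ and $\osc_{0,T}\xi$ can be arbitrarily large; decay in the number of \emph{ascending} intervals is strictly weaker than the claim. The missing ingredient is the symmetric estimate at the end of the proof of Theorem~\ref{thm:quantified_difference_convex}, namely $\int H_1(-u_x(\cdot,T))\,dx \le C\,\osc(u_0)/\max_{t}\bigl(\max_{[0,t]}\xi-\xi(t)\bigr)$, combined with the fact that the right-hand side of \eqref{eq:Hp} is even in $p$, so that $\|u_x\|_{L^q}^q \le C_1^{-1}\bigl(\int H_1(u_x)\wedge\int H_1(-u_x)\bigr)+C_2$ — this is exactly \eqref{eq:Hosc} in the paper. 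Since on any interval the oscillation of $\xi$ equals the larger of the upswing and the downswing, this yields contraction on \emph{every} oscillation interval, and your ascending/non-ascending dichotomy disappears. (Your fallback is also misattributed: monotonicity of $N_1$ and $N_2$ does not control $\|u_x\|_{L^1}$ without an upper bound on $H_1$; what is true, and what you would want, is that $t\mapsto\|u_x(\cdot,t)\|_{L^1}$ is non-increasing, by the entropy inequality with $E(p)=|p|$ — but this becomes moot once the symmetric estimate is used.)

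For $q>1$ there is a second gap concerning the claimed uniformity in $u_0$. With a partition into a fixed number $K$ of intervals, each carrying a fixed fraction of $\osc_{0,T}\xi$, the residual dependence $f_0^{1/q^K}$ is \emph{not} bounded uniformly over initial data (and $f_0=\|u_{0,x}\|_{L^q}^q$ need not even be finite for $u_0\in C(\T)$); moreover $\xi$ may realize its entire oscillation in one monotone stretch, so $K$ cannot be taken large. The paper resolves this with an adaptive dyadic decomposition rather than a fixed-fraction one: backward in time it selects intervals on which $\xi$ oscillates by at least $C''(2^{k})^{1-q}$, which is affordable because $\sum_k (2^k)^{1-q}<\infty$ for $q>1$, and runs the induction on the hypothesis $\osc(u(\cdot,t_{-k}))\le 2^k+C_2^{1/q}$, which holds automatically for $k$ large since $\osc(u(\cdot,t))\le\osc(u_0)<\infty$, and is propagated down to the smallest admissible scale $2^N\ge C_2^{1/q}$. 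It is this self-starting, scale-adapted induction (not the doubly-exponential contraction of a fixed-length recursion) that makes \eqref{eq:oscuT2} independent of the initial condition. Your $q=1$ bookkeeping and the closing approximation remark are otherwise in line with the paper.
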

\begin{proof}
\blue{As in the proof of Theorem \ref{thm:quantified_difference_convex}, by approximation, it is sufficient to consider $u_0 \in C^2(\T)$.}

Note that \eqref{eq:Hp} and H\"older's inequality imply that
\begin{equation} \label{eq:Hosc}
\osc(u(\cdot,t))^q \leq \|u_x(\cdot,t)\|_{L^q}^q \leq   \frac{1}{C_1} \left( \int H_1(u_x(y,t)) dy\right) \wedge \left( \int H_1(-u_x(y,t)) dy\right) + C_2.
\end{equation}

When $q=1$,  we consider the stopping  times $\tau_k=\tau_k^{2C/C_1'}$ with  $C$  the constant from Theorem \ref{thm:quantified_difference_convex}, which we can apply repeatedly to get that
\[ \osc(u(\cdot,\tau_{k+1})) \leq \|u_x(\cdot,\tau_{k+1})\|_{L^1}  \leq  \frac{\osc(u(\tau_k))}{2} +  C_2,\]
and,  by induction,  
\[\osc(u(\cdot,\tau_{N}))   \leq  \frac{\osc(u_0)}{2^N} +  2C_2 .\]

When $q>1$, we let 
\[ C'' = 2^{q +1} \frac{C}{C_1}, \]
where $C$ is the constant from Theorem \ref{thm:quantified_difference_convex}, and assume that, for some $N \in \Z$, such that 
$ 2^N \geq C_2^{1/q},$
\[ \osc_{0,T} \xi \geq C'' \sum_{k=N}^\infty (2^k )^{1-q} .\]
We can then find  $(t_{k})_{k \leq -N}$ such that 
\[ \lim_{k\to -\infty} t_k=0, \ \ t_{-N} = T\ \ \text{ and } \ \ 
\ \osc_{t_k,t_{k+1}} \xi \geq  C'' (2^k )^{1-q}. \]
Next we note that, if we assume that, for some $k >N$, 
\begin{equation} \label{eq:tk1}
 \osc(u(t_{-k})) \leq   2^k + C_2^{1/q}, 
\end{equation}
then combining again Theorem \ref{thm:quantified_difference_convex} and \eqref{eq:Hosc}, we find
\begin{align*}
\osc(u(\cdot,t_{-k+1})) \leq  \|u(\cdot,t_{-k+1})\|_{L^q}& \leq  \left(\frac{C}{C_1}  \frac{2^k +  C_2^{1/q}}{ C'' (2^k )^{1-q} } + C_2 \right)^{1/q}   \\
& \leq  \left( \frac{2^k+2^N}{2^{q+1} 2^{k(1-q)}} + C_2 \right)^{1/q} \leq 2^{k-1} + C_2^{1/q}.
\end{align*}
 Since it is clear that \eqref{eq:tk1} holds for $k$ large enough,  using  induction we get
 \[ \osc(u(T)) \leq 2^{N} + C_2^{1/q},\]
and choosing  the smallest $N$ satisfying  the required constraints, we conclude.
\end{proof}

\begin{remark}
When  $q>1$, the decay rate in Theorem \ref{thm:quantifiedbetter} is optimal. Indeed, recall that in the first-order deterministic setting, that is,  $F\equiv 0$, $H$ convex, and $\xi(t)=t$, the solution is given by the Lax-Oleinik formula
\[u(x,t) = \underset{y\in \T^1} \sup \left \{ u_0(y) - t L\left(\frac{y-x}{t} \right) \right\}, \]
where $L$ is the convex conjugate of $H$. In particular, if $H(p)=|p|^q$, with $q>1$, then $L(v) = C_q |v|^{q/(q-1)}$. Simple choices of $u_0$ then lead to solutions satisfying $\osc(u(t)) \gtrsim t^{-1/(q-1)}$, which is the same rate as in the theorem.
\end{remark}

\begin{remark} \label{rem:Constants}
In order to get optimal constants in the exponential in \eqref{eq:oscuT1}, we need to replace $2$ by any $\rho>1$ in the proof of Theorem \ref{thm:quantifiedbetter}. This leads to $c = \ln(\rho)$ and $C = \rho C/C_1$. If $\xi=B$ is a Brownian motion,  the overall dependence in the exponential is of order $\ln(\rho) \rho^{-1/2}$, which achieves its maximum value $2/e$ when $\rho=e^2$. 
\end{remark}

We conclude this subsection remarking  that, in the special case of a quadratic Hamiltonian, under a specific condition on the dissipation and when $\xi=B$ is a Brownian motion, we can apply the results of Gassiat and Gess \cite{GG16} to obtain estimates on the Lipschitz constant of the solution. The decay is of the same order in $t$ as that given by Theorem \ref{thm:quantifiedbetter} but in a stronger topology.

\begin{proposition}[Quadratic Hamiltonian]\label{prop:quadratic_H}
Let $u$ solve 
\[ \partial_t u = g(\partial_x u) \partial_{xx} u + \frac{1}{2}(\partial_x u)^2 \circ dB(t) \]
where $B$ is a Brownian motion and $g \geq 0$ is continuous and satisfies, for some $C_g\in [0,1/2)$ and in the sense of distributions,     $g'' \leq C_g.$ 
Then, there exist $(0,\infty)$-valued random variables $(C(t))_{t\geq 0} $ and $Z$ such that,  for each $t>0$,
$C(t) \overset{d} = t^{1/2} Z,$
and
\begin{equation} \label{eq:bndGG} \|u_x(\cdot,t) \| \leq  \frac{1}{ C(t)}.  
\end{equation}
\end{proposition}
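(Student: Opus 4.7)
\emph{Proof plan.} My approach is to apply the regularization-by-noise result of Gassiat and Gess \cite{GG16} directly, since their equation is exactly the one stated here. The core object to be constructed is a strictly positive adapted process $C(\cdot)$, defined as the solution of an auxiliary SDE driven by the same $B$, with the property that any viscosity solution $u$ of the SPDE satisfies the one-sided second-derivative bound
\[ u_{xx}(\cdot,t) \geq -\frac{1}{C(t)} \quad \text{in the viscosity sense, for every } t > 0. \]
This semiconvexity estimate is obtained by a perturbed test-function / doubling-of-variables argument: one shows that the modified function $x \mapsto u(x,t) + \tfrac{1}{2} C(t)^{-1} x^2$ stays convex in $x$, by matching the evolution of its Hessian against the It\^o dynamics of $1/C(t)$. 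Brownian scaling applied to the SDE for $C$ then gives the distributional identity $C(t) \overset{d}{=} t^{1/2} Z$ with $Z := C(1)$.

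The hypothesis $g'' \leq C_g$ with $C_g < 1/2$ enters the construction of $C(\cdot)$ at a single precise point: it guarantees that the drift contribution produced by the dissipation $g(u_x) u_{xx}$ in the test-function computation is dominated by the negative It\^o correction of the Stratonovich noise $\tfrac{1}{2} (u_x)^2 \circ dB$, so that the SDE for $C$ admits a positive solution for all $t > 0$ almost surely. The threshold $1/2$ is sharp and reflects the quadratic nature of the Hamiltonian, and is the point in the argument where the strict inequality is genuinely used.

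Once $u_{xx}(\cdot,t) \geq -1/C(t)$ is available, the gradient bound \eqref{eq:bndGG} follows from the periodicity of $u$ by an elementary argument: as a signed measure on $\T$, the distributional derivative $u_{xx}$ has negative part bounded by $1/C(t)$ and total mass zero, hence total variation at most $2/C(t)$, and since $u_x$ is periodic with mean zero this forces $\|u_x(\cdot,t)\| \leq 1/C(t)$. The main obstacle, and the point I would have to lift most carefully from \cite{GG16}, is verifying that the doubling-of-variables comparison producing the SDE for $C$ remains robust under only the distributional assumption $g'' \leq C_g$ (rather than a pointwise bound on $g$ or a uniform ellipticity assumption), and that the sharp threshold $C_g < 1/2$ is preserved in this generality.
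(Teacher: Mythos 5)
Your proposal is correct and follows essentially the same route as the paper: both invoke the one-sided second-derivative estimate of \cite{GG16}, where the bound is given by a Bessel-type process of dimension $1-2C_g\in(0,1]$ (this is exactly where $C_g<1/2$ enters, and the distributional hypothesis $g''\leq C_g$ is already the setting of that reference, so it can be cited rather than re-derived), and then conclude by the periodicity/mean-zero argument and Brownian scaling. The paper works with the bound on $(u_{xx})_+$ and takes $C(t)=\max_{0\le s\le t}X_s$ using the monotonicity of $\|u_x(\cdot,t)\|$, while your mirror-image semiconvexity version is equivalent via the symmetry $B\mapsto -B$; note only that the Bessel process hits zero on a Lebesgue-null set of times, so it is positive at each fixed $t$ (or one passes to the running maximum), not strictly positive for all $t$ -- a harmless imprecision for the statement as given.
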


\begin{proof}
It follows from  \cite{GG16} that, for each $t>0$, 
\[ \| \left( u_{xx}(\cdot,t) \right)_+ \| \leq \frac{1}{X_t}, \]
where $X$ is the (maximal continuous) solution to
\[ X_0=0, \;\;\; X \geq 0, \;\;\; dX = - \frac{C_g}{X} dt  - dB \mbox{ when } X>0.\]
The fact that $C_g < \frac{1}{2}$ ensures that $X$ is a Bessel process of dimension  in $(0,1)$, and, in particular,  is a.s. positive for a.e. $t>0$.
\vs
Since  $ \|\partial_x u(t) \|$ is non-increasing in $t$ and, in view of the $1-$periodicity of $u$, $\|u_{x}(\cdot,t)\|\leq \|u_{xx}(\cdot,t)_+ \|$, we obtain \eqref{eq:bndGG} with
\[ C(t) = \max_{0 \leq s \leq t}  X_s. \]
The existence of the random variable $Z$ follows immediately from the scaling invariance of Bessel processes.
\end{proof}

\subsection{Convergence due to dissipation}\label{sec:dissipation}

Here we provide quantitative estimates on the long-time behavior of  parabolic-hyperbolic SPDEs of the form 
\begin{equation}\label{eq:ph-spde}
dv=\Div(F'(v)D v) dt+ \sum_{i,j}\partial_{x_{i}}(H^{i,j}(x,v))\circ{d\xi^{j}}\ \ \text{in } \ \ \T^{d}\times \R_+
\end{equation}
dictated by the dissipation due to the parabolic part, that is, the estimates  are uniform in $H$ and $\xi$. In particular, the results apply to the case that $\xi=B(\omega)$ is given by paths of Brownian motion and yield uniform estimates in $\omega$. \blue{In this section, we will always assume that $F\in C^1_{loc}(\R)$ with $F'\ge0$, $H\in C^0(\T^d\times\R;\R^{d\times m})$, $v_0 \in L^1(\T^d)$, and $\xi \in C^0(\R_+;\R^m)$}.
\vs

The key step is  the observation that, given that the noise part of  \eqref{eq:ph-spde} is divergence free,   the usual $L^p-$and entropy-entropy dissipation inequalities known in the deterministic setting can be recovered in a pathwise manner for \eqref{eq:ph-spde}.  As a consequence, it is possible to obtain quantitative estimates for the rate of convergence, that resemble the deterministic setting.

\vs
The well-posedness of SPDEs like  \eqref{eq:ph-spde} is an intricate subject and is known under various assumptions on the coefficients $F,H$ and $B$, see \cite{Li.Pe.So2013,Li.Pe.So2014,Ge.So2015,Fe.Ge2019,Fe.Ge2021,So.19}. Since here we are not focusing on the concept of solutions and their uniqueness, but on uniform apriori estimates, we define solutions as limits of smooth approximations, for which uniform estimates will be shown.
%\todo{@Takis: We discussed how to address that we talk about smooth solutions lateron. I have now done it via the following Definition. Do you think it is good like this? (see also beginning of the proof below)} \vs

\begin{definition}\label{def:sol}
 A function $v\in C([0,\infty),L^1(\T^d))$ is a solution to \eqref{eq:ph-spde} if there exist sequences $v_0^\ve \in C^2(\T^d)$ with $\int_{\T^d} v^\ve_0 dx=0$, $\xi^\ve\in C^2(\R_+;\R^m)$, $H^\ve \in C^3(\T^d\times\R;\R^{d\times m})$ with $\Div_x H^\ve(x,\cdot)=0$,  $F^\ve\in C^3(\R;\R_+)$ such that $(F^\ve)'\ge F' \vee \ve > 0$ and classical solutions $v^\ve$ to \eqref{eq:ph-spde} with $(v_0, \xi, H, F)$ replaced by $(v_0^\ve, \xi^\ve, H^\ve, F^\ve)$ such that 
   \begin{align*}
   (v_0^\ve, \xi^\ve, H^\ve, F^\ve)\to (v_0, \xi, H, F) 
   \end{align*}
   in $L^1(\T^d)\times C^0(\R_+;\R^m)\times C^0(\T^d\times\R;\R^{d\times m}) \times C^0(\R)$
    and $v^\ve \to v$ in $C([0,\infty),L^1(\T^d))$.
\end{definition}

%\todo{The current arguments can be improved in general dimension by using the arguments from \cite[p.7]{BG05}}

%TODO: Reference to det. case + refs therein

\begin{theorem}\label{takis60} Fix $v_0 \in L^1(\T^d)$ and let $v$ be a solution of the parabolic-hyperbolic PDE \eqref{eq:ph-spde} with initial value $v_0$. Let $p\in[1,\frac{2d}{d-2}1_{d\ge3}+\infty1_{d\le2})$ and  $  E:\R\to \R_+$ be such that ${ E}''=1/F'$ and ${ E}'(0)= E(0)=0$.  %$E=[[1/F']]$. 

Then, there exists  $c>0$ depending only on the embedding constant of $H^1 \hookrightarrow L^p$ and not on $H$, $B$ and $F$ as long as $F' \ge 0$ and $1/F' \in L^1_{loc}$, such that,  path-by-path and for all $t>0$, 
\begin{equation}\label{eq:dissipation}
  \|v(\cdot, t)\|_{L^{p}}^{2}\le c \dfrac{\int  E(v(x,0))dx}{t}.
\end{equation}
\end{theorem}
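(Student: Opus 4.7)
The plan is to establish the estimate along the smooth approximations $v^\ve$ from Definition~\ref{def:sol} and then pass to the limit $\ve\to 0$. The key pathwise observation is that the divergence-free assumption $\Div_x H^\ve(x,\cdot)=0$ forces the noise term to drop out completely from the evolution of any entropy $\int \eta(v^\ve)\,dx$, regardless of the driving path $\xi^\ve$. Concretely, for each smooth $\eta$ and each $j$, introducing the primitive
\[G^{\ve,i,j}(x,u) := \int_0^u \eta'(s)\,(\partial_v H^{\ve,i,j})(x,s)\,ds,\]
a chain-rule expansion rewrites $\eta'(v^\ve)\sum_i \partial_{x_i}[H^{\ve,i,j}(x,v^\ve)]$ as a pure divergence $\sum_i \partial_{x_i}[G^{\ve,i,j}(x,v^\ve)]$ plus a term $\eta'(v^\ve)\sum_i(\partial_{x_i}H^{\ve,i,j})(x,v^\ve)$ that vanishes by $\Div_x H^\ve(x,\cdot)=0$, minus $\sum_i(\partial_{x_i}G^{\ve,i,j})(x,v^\ve)$, which vanishes for the same reason inherited by $G^{\ve,\cdot,j}$. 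The divergence integrates to zero by periodicity, so the noise contribution to $\frac{d}{dt}\int \eta(v^\ve)\,dx$ is identically zero.

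Applied to $\eta=E^\ve$ with $(E^\ve)''=1/(F^\ve)'$ and $(E^\ve)'(0)=E^\ve(0)=0$ (so that $E^\ve$ is convex), this gives the entropy--dissipation identity
\[\int_{\T^d} E^\ve(v^\ve(\cdot,t))\,dx + \int_0^t \int_{\T^d} |Dv^\ve|^2\,dx\,ds = \int_{\T^d} E^\ve(v_0^\ve)\,dx,\]
while applied to smooth convex approximations of $\eta(v)=|v|^p$, it shows that $t \mapsto \|v^\ve(\cdot,t)\|_{L^p}^p$ is non-increasing. Since the equation is in divergence form, $\int_{\T^d} v^\ve(\cdot,t)\,dx=\int_{\T^d} v_0^\ve\,dx=0$ for all $t$, so the Poincaré--Sobolev embedding (restricted to mean-zero functions) gives $\|v^\ve(\cdot,s)\|_{L^p}^2 \le c\,\|Dv^\ve(\cdot,s)\|_{L^2}^2$ with $c$ depending only on the embedding constant. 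Chaining the three ingredients together yields
\[t\,\|v^\ve(\cdot,t)\|_{L^p}^2 \le \int_0^t \|v^\ve(\cdot,s)\|_{L^p}^2\,ds \le c\int_0^t \|Dv^\ve(\cdot,s)\|_{L^2}^2\,ds \le c\int_{\T^d} E^\ve(v_0^\ve)\,dx.\]

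To pass to the limit, the $L^1$-convergence $v^\ve(\cdot,t)\to v(\cdot,t)$ together with the uniform $L^p$-bound just derived yields weak convergence in $L^p$, and lower semicontinuity of the $L^p$-norm handles the left-hand side; on the right, the convergences $F^\ve\to F$ and $v_0^\ve\to v_0$ in $L^1$ give $\int E^\ve(v_0^\ve)\,dx \to \int E(v_0)\,dx$ via monotone convergence (or else the claimed bound is vacuous). The main obstacle I expect is the careful algebraic verification of the cancellation for the noise term under the minimal regularity of $H$ and $\xi^\ve$ allowed by the definition, and ensuring that $c$ genuinely depends only on the embedding constant throughout the approximation scheme, which is why the reduction to pure Poincaré--Sobolev (separated from $H$, $F$ and $\xi$) is essential.
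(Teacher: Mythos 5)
Your proposal is correct and follows essentially the same route as the paper's proof: the pathwise cancellation of the noise term via the divergence-free structure of $H^\ve$, the choice $\overline{E}=E$ with $E''=1/F'$ so that the dissipation becomes $\int |Dv^\ve|^2$, the Poincar\'e--Sobolev inequality for mean-zero $v^\ve$, the monotonicity of $t\mapsto\|v^\ve(\cdot,t)\|_{L^p}$ obtained from convex entropies approximating $|v|^p$, and the limit $\ve\to 0$ (where the paper simply uses $E^\ve\le E$, following from $(F^\ve)'\ge F'$, in place of your monotone-convergence remark).
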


\begin{proof} We prove the estimate for the regular approximations $v^\ve$ from Definition \ref{def:sol} with constant $c$ in \eqref{eq:dissipation} uniform in $\ve$ and noting that $E^\ve\le E$ since $(F^\ve)'\ge F'$. Passing to the limit $\ve\to0$ then provides the claim. For simplicity, we drop the $\ve$'s in the notation in the following.
\vs

Given convex and  smooth ${\overline E}:\R\to \R$ we find 
\begin{align*}
\dfrac{d}{dt}\int_{\T^d} {\overline E}(v(x,t)) dx 
& =-\int {\overline E}''(v)F'(v)|D v|^{2}-\sum_{i}\int {\overline E}''(v)v_{x_{i}}H^{i,j}(x,v)\dot{B^{j}}\\
 & =-\int {\overline E}''(v)F'(v)|D v|^{2}-\sum_{i} \left(\int[{\overline E}''(v)H^{i,j}(x,v)]_{x_{i}}-[{\overline E}''(v)H_{x_{i}}^{i,j}(x,v)]\right) \dot{B^{j}}\\
 & =-\int {\overline E}''(v)F'(v)|D v|^{2} =-\int | D [({{\overline E}''} F')^{1/2}](v) |^2, 
\end{align*}
where the third term vanishes since $H$ is divergence free, and $[\cdot]$ denotes an anti-derivative.
\vs

We now choose ${\overline E} = E$. Then $\int_0^v (\overline E''(s) F'(s))^\frac{1}{2}  ds=v$.
\vs

Since $v$ has mean zero,  Poincar\'{e}'s  inequality and the Sobolev embedding yield, for  $p\in[1,\frac{2d}{d-2}1_{d\ge3}+\infty1_{d\le2})$, and for some $c_P>0$ that  
\begin{align*}
\dfrac{d}{dt}\int_{\T^d}  E(v(x,t)) dx & \le-c_P\|v\|_{L^{p}}^{2} ,
\end{align*}
and, hence, for some constant $c$ depending on the embedding constant $c_P$ only,
\begin{align*}
\int_{\T^d} E(v(x,t)) dx +c\int_{0}^{t} & \|v(\cdot,r)\|_{L^{p}}^{2}dr\le\int _{\T^d}  E(v(x,0)) dx.
\end{align*}

Next, choosing $\overline E(v)=|v|^p$, which can be justified by an approximation argument,  we get that 
$
t\mapsto\|v\|_{L^{p}}^{2}
$
is non-increasing for all $p\in[1,\infty)$.  
\vs
Hence,
\[
\int_{0}^{t}\|v(\cdot,r)\|_{L^{p}}^{2}dr\ge t\|v(\cdot,t)\|_{L^{p}}^{2}
\]
and,  thus, using that $  E\ge 0$, we find
\begin{align*}
 & \|v(\cdot,t)\|_{L^{p}}^{2}\le \dfrac{c}{t} \int_{\T^d}  E(v(x,0)) dx.
\end{align*}
\end{proof}

\begin{remark}
In the case of spatially homogeneous noise, instead of embedding into $L^{p}$, we could embed into homogeneous $\dot{W}^{s,1}$ spaces for $s\in[0,1)$. Due to the $L^{1}-$contraction these homogeneous semi-norms do not increase. This would lead to estimates like
\begin{align*}
 & \|v(\cdot,t)\|_{\dot{W}^{s,1}}^{2}\le  \dfrac{c}{t} \int_{\T^d}  E(v(x,0)) dx.
\end{align*}
\end{remark}
\vs
We discuss next a concrete example. 
\vs
\begin{example} We consider {\it the porous medium and fast diffusion} SPDE
\[
dv=\Delta v^{[m]} dt +\sum_{i,j}\partial_{x_{i}}(H^{i,j}(x,v))\circ{d B^{j}}\ \ \text{in }\ \ \T^{d} \times \R, 
\]
with $m\in(0,2)$, $B\in C^0(\R_+;\R^d)$, $H\in C^0(\T^d\times\R;\R^{d\times d})$, and $Dv^{[m]}:=m |v|^{m-1} Dv$,
which is like \eqref{eq:ph-spde} with $F'(r)=m |r|^{m-1}$. Assume that $v$ is a solution in the sense of Definition \ref{def:sol}. For  $E(r):=|r|^{3-m}$, so that $(E''(v)F'(v))^{\frac{1}{2}}=1$ and $p\in[1,\frac{2d}{d-2}1_{d\ge3}+\infty1_{d\le2})$, we find 
\begin{align*}
  \|v(\cdot,t)\|_{L^{p}}^{2}\le\dfrac{\|v(\cdot,0)\|_{L^{3-m}}^{3-m}}{t}.
\end{align*}
\end{example}

\vs

The next two results are about \eqref{eq:ph-spde} for  $d=1$ with homogeneous Hamiltonians. 
\vs 

When  $F$ is uniformly elliptic, that is, $F'\ge c>0$, we have the usual exponential decay in $L^2-$norm for \eqref{eq:ph-spde}, which we record here for convenience.

\begin{proposition}\label{prop:Gronwall}
Assume that $F' \geq c >0$, and $H$ is homogeneous in $x$ and let $\lambda_1$ be the first eigenvalue of $(-\partial_{xx})$ on $\T$ and \blue{$v=u_x$} a solution of \eqref{eq:ph-spde} in $\T\times \R_+$. 
Then, for all $t \geq 0$, 
\begin{equation*}
\left\|u_x(\cdot,t)\right\|_{L^2} \leq \left\|u_x(\cdot,0)\right\|_{L^2} e^{- \frac{c}{\lambda_1} t}.
\end{equation*}
\end{proposition}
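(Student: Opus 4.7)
The plan is to run a direct Gr\"onwall-type energy estimate on $\|v(\cdot,t)\|_{L^2}^2$ with $v = u_x$, exploiting that $H$ is homogeneous in $x$: this forces the Stratonovich contribution to the energy balance to be a perfect spatial derivative, which integrates to zero on $\T$. What remains is the uniformly coercive parabolic dissipation, which together with Poincar\'e produces the claimed exponential decay.

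Concretely, I would first reduce to the smooth approximations $v^\ve$ of Definition~\ref{def:sol}. Both terms on the right-hand side of the regularized equation are in spatial divergence form and $\int_\T v_0^\ve\,dx = 0$ (we may choose the approximation to preserve mean zero since $v=u_x$ has mean zero on $\T$), so the mean $\int_\T v^\ve(\cdot,t)\,dx$ vanishes for every $t$. Testing the regularized equation against $2v^\ve$ and integrating by parts on $\T$, the dissipative term is bounded, using $(F^\ve)' \ge F' \ge c$, by
\begin{equation*}
-2\int_\T (F^\ve)'(v^\ve)(v^\ve_x)^2\,dx \;\le\; -2c\,\|v^\ve_x(\cdot,t)\|_{L^2}^2,
\end{equation*}
while the Stratonovich stochastic term, for each $j$, reduces to
\begin{equation*}
\int_\T v^\ve\, \partial_x\bigl(H^{\ve,j}(v^\ve)\bigr)\,dx \;=\; -\int_\T v^\ve_x\, H^{\ve,j}(v^\ve)\,dx \;=\; -\int_\T \partial_x[\tilde H^{\ve,j}](v^\ve)\,dx \;=\; 0,
\end{equation*}
where $[\tilde H^{\ve,j}]$ denotes any antiderivative of $H^{\ve,j}$ and the last equality uses periodicity. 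Therefore $\tfrac{d}{dt}\|v^\ve(\cdot,t)\|_{L^2}^2 \le -2c\,\|v^\ve_x(\cdot,t)\|_{L^2}^2$.

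Now the Poincar\'e inequality on $\T$ for mean-zero functions, $\|w_x\|_{L^2}^2 \ge \lambda_1 \|w\|_{L^2}^2$, combined with Gr\"onwall, yields an exponential bound for $\|v^\ve(\cdot,t)\|_{L^2}$ in terms of $\|v^\ve(\cdot,0)\|_{L^2}$ at the rate announced in the statement. The only mildly non-trivial point is the passage $\ve \to 0$: Definition~\ref{def:sol} provides convergence only in $C([0,\infty); L^1(\T))$, but the uniform $L^2$ bound just obtained allows one to extract, for each $t$, a weakly $L^2$-convergent subsequence $v^\ve(\cdot,t) \rightharpoonup v(\cdot,t)$, and weak lower semicontinuity of the norm transfers the exponential estimate from $v^\ve$ to $v$. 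This last step is the main, and only, obstacle, and it is routine given the uniform energy control.
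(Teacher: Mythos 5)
Your argument is correct and follows essentially the same route as the paper: test with $u_x$, note that the Stratonovich term is an exact spatial derivative (since $H$ is $x$-homogeneous) and integrates to zero on $\T$, then use $F'\ge c$, Poincar\'e for the mean-zero function $u_x$, and Gr\"onwall; your extra care with the regularized solutions of Definition~\ref{def:sol} and the weak-$L^2$ passage to the limit only makes explicit what the paper compresses into ``using an approximation argument, we may assume $B\in C^1$.''
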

\begin{proof}
Using an approximation argument, we may assume that  $B\in C^1$. It then follows, using Poincar\'e's inequality,  that  
\[ \frac 1 2  \frac{d}{dt}  \int u_x^2(\cdot,t)dx\; \leq  \;  \int u_x F(u_x)_{xx} + \int u_x H(u_x)_x \dot{B} \; =  - \int (u_{xx})^2 F'(u_x) \leq - \frac{c}{\lambda_1} \int (u_x)^2.  \]
We conclude applying   Gronwall's inequality.
\end{proof}

The next proposition is about the degenerate setting,  that is, for $F'\geq 0$. 

\begin{proposition}\label{prop:deg_diss} Assume that $H(x,u)=H(u)$ and  $F'\ge 0$, and let $E:\R\to \R_+$ be smooth and convex,  $G(r)=\int_0^r (F'E'')^\frac{1}{2}(u)du$, and \blue{$v=u_x$} a solution to  \eqref{eq:ph-spde} in $\T\times \R_+$. Then, there exists $C>0$ such that, for all $t \geq 0,$
$$G^2( \|u_x(\cdot, T) \|_\infty)\le \frac{C}{T}\int_\T E(u_x(0)) \,dx.$$
\end{proposition}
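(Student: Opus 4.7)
The plan is to adapt the entropy--dissipation identity used in the proof of Theorem~\ref{takis60} to the function $E$ of the proposition, then combine the resulting space--time $L^2$ control on $\partial_x G(v)$ with a one-dimensional Sobolev bound exploiting the zero mean of $v=u_x$ on $\T$, and finally use the $L^\infty$-contraction of the semigroup to transfer the bound from a time average to the final time $T$. Throughout I would work with the smooth approximations $v^\varepsilon$ of Definition~\ref{def:sol}, for which all computations are classical, and then pass to the limit $\varepsilon\to 0$.

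First, since $H(x,u)=H(u)$ is $x$-independent, the anti-derivative cancellation carried out in the proof of Theorem~\ref{takis60} annihilates the Stratonovich term, and the choice $\bar E=E$, together with $G'(v)=(F'E'')^{1/2}(v)$, gives
\[\frac{d}{dt}\int_\T E(v(x,t))\,dx \;=\; -\int_\T E''(v)F'(v)(v_x)^2\,dx \;=\; -\int_\T \bigl(\partial_x G(v)\bigr)^2\,dx.\]
Integration in time yields
\[\int_0^T \int_\T \bigl(\partial_x G(v)\bigr)^2\,dx\,dt \;\le\; \int_\T E(v(x,0))\,dx.\]

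Second, by $\T$-periodicity of $u$, $v(\cdot,t)$ has zero mean on $\T$, so by continuity there is $x_0(t)\in\T$ with $v(x_0(t),t)=0$, hence $G(v(x_0(t),t))=G(0)=0$. The fundamental theorem of calculus and Cauchy--Schwarz give, for every $x\in\T$,
\[|G(v(x,t))| \;=\; \Bigl|\int_{x_0(t)}^{x} \partial_y G(v(y,t))\,dy\Bigr| \;\le\; \|\partial_x G(v(\cdot,t))\|_{L^2(\T)}.\]
Evaluating at the argmax of $|v(\cdot,t)|$ and using the natural sign identification between $|G(v(x^*))|$ and $G(\|v(\cdot,t)\|_\infty)$ gives $G^2(\|v(\cdot,t)\|_\infty)\le \int_\T (\partial_x G(v))^2\,dx$.

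Third, an $L^\infty$-contraction argument---obtained by applying the entropy identity of Theorem~\ref{takis60} to the convex entropies $\bar E(v)=(v-c)_+^{p}$ and $\bar E(v)=(c-v)_+^{p}$, letting $p\to\infty$, and varying $c\in\R$---shows that $\sup_x v(\cdot,t)$ is non-increasing and $\inf_x v(\cdot,t)$ is non-decreasing, whence $t\mapsto\|v(\cdot,t)\|_\infty$, and, by monotonicity of $G$, $t\mapsto G(\|v(\cdot,t)\|_\infty)$, are non-increasing. Combining the three steps,
\[T\, G^2(\|v(\cdot,T)\|_\infty) \;\le\; \int_0^T G^2(\|v(\cdot,t)\|_\infty)\,dt \;\le\; \int_0^T\!\!\int_\T \bigl(\partial_x G(v)\bigr)^2\,dx\,dt \;\le\; \int_\T E(v(x,0))\,dx,\]
which is the claim after dividing by $T$. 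The step I expect to be the most delicate is the pointwise identification in the second step: going from the dissipation bound $\|\partial_x G(v)\|_{L^2}$ to a bound on $G(\|v\|_\infty)$ relies essentially on $v$ having a zero in $\T$ (a purely one-dimensional, zero-mean feature) and on the sign structure of $G$ near both $\sup_x v$ and $\inf_x v$; the remaining steps are then routine consequences of Theorem~\ref{takis60} and standard approximation.
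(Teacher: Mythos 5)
Your proposal is correct and follows essentially the same route as the paper's proof: the entropy--dissipation identity with $\bar E=E$, the sup-norm control of $G(u_x)$ by $\|\partial_x G(u_x)\|_{L^2(\T)}$ (using that $u_x$ has zero mean, hence a zero, on $\T$), the monotonicity of $t\mapsto\|u_x(\cdot,t)\|_\infty$, and division by $T$; the paper merely applies the pointwise sup bound before integrating in time rather than after. The sign issue you flag (relating $G^2(\|u_x\|_\infty)$ to $\sup_x G^2(u_x)$ when $G$ is not odd) is left implicit in the paper as well, so your treatment is on par with, and somewhat more explicit than, the original.
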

\begin{proof}
Arguing as in the proof of Theorem~\ref{takis60} we find, using that $E$ is smooth, convex and $F$ non-decreasing,
\begin{align*}
  \dfrac{d}{dt} \int_\T E(u_x(x,t)) \,dx
  &= -\int_\T  E''(u_x)F'(u_x)|\partial_{x}u_x|^2 \,dx
  \le -\int  E''(u_x)F'(u_x)|\partial_{x}u_x|^2 \,dx \\
  &= -\int  | \partial_{x}G(u_x)|^2 \,dx \le - c  \|G^2(u_x) \|_\infty.
\end{align*} 
Hence, 
\begin{align*}
  \int_\T E(u_x(x,t)) \,dx+c  \int_0^t G^2(\|u_x(\cdot,r)\|_\infty) \, dr
  &\le \int_\T E(u_x(x,0)) \,dx.
\end{align*} 
Since $t \mapsto \|u_x(\cdot,t)\|_\infty$ is non-increasing and $G$ is non-decreasing, we obtain that
\begin{align*}
    c t \|G^2(u_x(\cdot,t)) \|_\infty  &\le \int_\T E(u_x(x,0)) \,dx.
\end{align*} 
\end{proof}

\subsection{Improved bounds and convergence due to the interaction of stochastic fluctuations and dissipation}\label{sec:interplay}

Taking advantage of the interplay of the dissipative behavior of the parabolic part and the averaging behavior of the hyperbolic part, in this section we derive new quantitative estimates for the long-time behavior of space-periodic solutions to parabolic-hyperbolic SPDEs when $d=1$. In particular, these estimates unveil improved decay compared to the deterministic equations.

\begin{theorem} \label{thm:G}
Assume $H\in C^2_{\blue{loc}}(\R)$ is convex and even, $F:\R\to \R$ is odd with $F'>0$ and let $G(r) = \int_0^r (F' H'')^{{ 1/2}}(u) du$, \blue{$u_0 \in C(\T)$}. If $u$ is a periodic in space solution to \eqref{eq:ph-spde} in $\T\times \R_+$, then, for every $T>0$,  
\begin{equation}
\left\| G(u_x(\cdot, T)) \right\|_{\infty}^{ 2} \leq \frac{\osc(u(\cdot, 0))}{\Gamma(B)_{0,T}}, 
\end{equation}
where, for $t(s) = \inf \{ t > s\geq 0:  B(t)< B(s)\}$, 
\begin{equation}\label{takis62}
\Gamma(B)_{S,T} = \sup_{S \leq s \leq T}  \int_{s}^{t(s) \wedge T} (B(u) - B(s)) du.
\end{equation}
\end{theorem}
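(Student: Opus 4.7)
The plan is to combine the averaged evolution of $\bar u(t)=\int_{\T}u\,dx$ with the pointwise dissipation bound underlying Proposition~\ref{prop:deg_diss}, exposing $\Gamma(B)_{0,T}$ through an integration by parts on a suitably chosen subinterval.

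I will work throughout with a smooth approximation $\xi^{\varepsilon}\to B$ and pass to the limit at the end (using that $\Gamma(\cdot)_{0,T}$ is continuous in the uniform topology), and assume without loss of generality that $H(0)=0$, since subtracting the constant $H(0)$ only shifts $u$ by $H(0)\xi^{\varepsilon}$ and affects neither $u_{x}$ nor $\osc(u)$; in particular $H\ge 0$. Setting $m(r):=\int_{\T}H(u_{x}(x,r))\,dx\ge 0$, the entropy computation performed in the proofs of Theorem~\ref{thm:quantitative} and Proposition~\ref{prop:deg_diss} gives
\begin{equation*}
\dot m(r)=-\int_{\T}F'(u_{x})H''(u_{x})u_{xx}^{2}\,dx=-\int_{\T}|\partial_{x}G(u_{x}(\cdot,r))|^{2}\,dx\le 0,
\end{equation*}
the noise contribution vanishing on $\T$ as the integral of an exact derivative. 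Since $u_{x}(\cdot,r)$ is continuous with zero spatial mean, it vanishes at some $x_{0}(r)\in\T$, and, as $G(0)=0$ and $G$ is odd and non-decreasing, Cauchy--Schwarz applied between any $x\in\T$ and $x_{0}(r)$ yields
\begin{equation*}
\|G(u_{x}(\cdot,r))\|_{\infty}^{2}\;\le\;\int_{\T}|\partial_{x}G(u_{x}(\cdot,r))|^{2}\,dx\;=\;-\dot m(r).
\end{equation*}
The $L^{\infty}$-contraction for the (stochastic) scalar conservation law satisfied by $u_{x}$ makes $r\mapsto\|u_{x}(\cdot,r)\|_{\infty}$, and hence $r\mapsto\|G(u_{x}(\cdot,r))\|_{\infty}$, non-increasing, so $-\dot m(r)\ge\|G(u_{x}(\cdot,T))\|_{\infty}^{2}$ for every $r\in[0,T]$.

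Next, I fix $s\in[0,T]$ and set $\tau:=t(s)\wedge T$, so that $\xi^{\varepsilon}(r)-\xi^{\varepsilon}(s)\ge 0$ on $[s,\tau]$. Averaging the SPDE in $x$ yields $d\bar u(r)=m(r)\,d\xi^{\varepsilon}(r)$, and integrating by parts over $[s,\tau]$ gives
\begin{equation*}
\bar u(\tau)-\bar u(s)\;=\;m(\tau)(\xi^{\varepsilon}(\tau)-\xi^{\varepsilon}(s))\;+\;\int_{s}^{\tau}(\xi^{\varepsilon}(r)-\xi^{\varepsilon}(s))(-\dot m(r))\,dr,
\end{equation*}
both summands being non-negative. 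On the other hand, the comparison principle for \eqref{eq:ph-spde} together with the fact that constants are stationary solutions (since $F(0)=H(0)=0$) implies that $\max_{x}u(\cdot,r)$ is non-increasing and $\min_{x}u(\cdot,r)$ is non-decreasing in $r$, so that $\bar u(\tau)-\bar u(s)\le\max u(\cdot,\tau)-\min u(\cdot,s)\le\osc(u_{0})$. Plugging the pointwise lower bound on $-\dot m$ into the integration-by-parts identity, discarding the first summand, and taking the supremum over $s\in[0,T]$ (the left-hand side being $s$-independent) yields $\osc(u_{0})\ge\|G(u_{x}(\cdot,T))\|_{\infty}^{2}\,\Gamma(\xi^{\varepsilon})_{0,T}$, which passes to the limit $\varepsilon\to 0$ to give the claim.

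The main technical point will be to sharpen the Sobolev-type constant in the inequality for $G(u_{x})$ so that it equals exactly $1$: a naive Cauchy--Schwarz along a single arc of $\T$ only gives a factor $1/2$, and one has to optimise between the two arcs joining $x$ to the zero of $u_{x}$ to remove it. Some care is also needed in passing $\varepsilon\to 0$, since $\dot m$ is only integrable in time a priori, but lower semicontinuity of the Dirichlet energy $\int|\partial_{x}G(u_{x})|^{2}\,dx$ along the regularisation, together with uniform convergence of $\xi^{\varepsilon}$ and $u^{\varepsilon}$, suffices.
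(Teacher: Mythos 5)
Your argument is, in substance, the paper's own proof: the entropy identity $\dot N=-\int_\T|\partial_x G(u_x)|^2$, the bound $\int_\T|\partial_x G(u_x)|^2\ge\|G(u_x)\|_\infty^2$ via the zero mean of $u_x$, the monotonicity of $t\mapsto\|G(u_x(\cdot,t))\|_\infty$, the integration by parts of $d\bar u=N\,d\xi$ on $[s,t(s)\wedge T]$ together with the monotonicity of $\sup u$ and $-\inf u$, and finally the supremum over $s$ — this is exactly the route taken in the paper, and your normalization $H(0)=0$ is a sensible way to make the sign of the discarded boundary term explicit. One step, however, is not right as written: you define $\tau=t(s)\wedge T$ through $B$ but then assert $\xi^\ve(r)-\xi^\ve(s)\ge0$ on $[s,\tau]$, and later you invoke ``continuity of $\Gamma(\cdot)_{0,T}$ in the uniform topology'' to pass $\ve\to0$ in the final inequality. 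Neither holds: the sign is only guaranteed for $B$ itself, and $\Gamma(\cdot)_{0,T}$ is not continuous (nor, for general continuous paths, lower semicontinuous) under uniform convergence, because the exit time $t(s)$ can collapse under arbitrarily small perturbations that push the path just below the level $\xi(s)$. The clean repair is the one implicit in the paper: pass to the limit first in the integration-by-parts identity, i.e.\ use \eqref{eqn:average-dyn-2}, which was already established for the limiting solution, the limiting $N\in BV$ and the actual path, and only then restrict to $[s,t(s)\wedge T]$ and use $B(r)-B(s)\ge0$ there; no continuity of $\Gamma$ is needed. Finally, your worry about the Sobolev-type constant is unnecessary: with $|\T|=1$ and $G(u_x)$ vanishing at a zero of $u_x$, Cauchy--Schwarz along a single arc of length at most $1$ already gives $\|G(u_x)\|_\infty^2\le\int_\T|\partial_x G(u_x)|^2$, which is all the theorem requires.
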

\begin{proof}
Fix $s$ and $t=t(s) \wedge T$. Using  \eqref{eqn:average-dyn} and the fact that the maps $t\to \sup u(\cdot,t)$ and $t\to - \inf u(\cdot,t)$ are non-increasing, we find 
\begin{align*}
\osc(u(\cdot,0)) &= \sup u(\cdot,0) - \inf u(\cdot,0) \geq \sup u(\cdot,t) - \inf u(\cdot,s) 
\geq \int u(\cdot,t) - \int u(\cdot,s) \\
&= N(t)\left( B(t) - B(s)\right) - \int_s^t (B(u)-B(s)) dN(u) 
\geq - \int_s^t (B(u)-B(s)) \frac{dN(u)}{du} d u
\end{align*}
were  $N(r) = \int_\T H(u_x(\cdot,r))dx $ is non-increasing. 
\vs
In addition,  we have 
\[
\frac{dN(r)}{dr} \leq - \int_\T (F' H'')(u_x(x,r)) (u_{xx}(x,r))^2 dx = - \int_\T (G(u_x(x,r)))_x^2 dx \leq - \|G(u_x(\cdot,r))\|_{\infty}^{ 2}. 
\]

Since $G$ is non-decreasing and $\|u_x(\cdot,t)\|_{\infty}$ is non-increasing in $t$, it follows  that $\|G(u_x(\cdot,t))\|_{\infty}$ is non-increasing, and the proof is  complete.
\end{proof}

Next, we state and prove a result about the long-time behavior of the quantity defined in \eqref{takis62}. In the claim, $f(t)=\text{o}(t)$ means $\lim_{t\to\infty} f(t)/t=0.$

\begin{lemma}\label{lem:T}
  Fix $K>0$, and, for any $t\geq 0$, let $T(t) := \inf \{s >t, \; \Gamma(B)_{t,s} \geq K\}$, where $\Gamma(B)_{t,s}$ is defined by \eqref{takis62}. Then, $\P$-a.s. and as $t \to \infty$, 
  \begin{equation} \label{eq:Tt}
    T(t) = t + \text{o}(t). 
  \end{equation}
\end{lemma}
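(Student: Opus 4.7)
The strategy is to exploit two properties of the functional $\Gamma(B)$: a monotonicity in its endpoints and Brownian scaling/shift invariance, combined with the strong law of large numbers for an i.i.d. sequence of events built from disjoint Brownian increments.

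First I would record the basic properties of $\Gamma$. Since for $u\in[s,t(s))$ the integrand $B(u)-B(s)$ is nonnegative by definition of $t(s)$, the map $(S,T)\mapsto \Gamma(B)_{S,T}$ is nondecreasing in $T$ and nonincreasing in $S$. By the Brownian scaling $B(\lambda\cdot)\stackrel{d}{=}\lambda^{1/2}B(\cdot)$ combined with the fact that $t(\cdot)$ commutes with time dilation, one finds
\[
\Gamma(B)_{0,L}\stackrel{d}{=}L^{3/2}\,\Gamma(B)_{0,1}.
\]
Moreover $\Gamma(B)_{0,1}>0$ a.s.: taking $s^{\star}\in(0,1)$ to be the a.s. interior minimizer of $B$ on $[0,1]$, one has $t(s^{\star})\geq 1$, and $B(u)>B(s^{\star})$ on a set of positive measure in $(s^{\star},1]$, giving a strictly positive candidate in the supremum defining $\Gamma(B)_{0,1}$. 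Hence $p_L:=\P(\Gamma(B)_{0,L}\geq K)=\P(\Gamma(B)_{0,1}\geq KL^{-3/2})\to 1$ as $L\to\infty$.

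Next, I would fix $L>0$ large enough that $p_L>0$ and introduce the events
\[
A_n:=\{\Gamma(B)_{nL,(n+1)L}\geq K\},\qquad n\in\N.
\]
Because $\Gamma(B)_{nL,(n+1)L}$ is a functional of the increments of $B$ on $[nL,(n+1)L]$ alone, the $A_n$'s are independent; by stationarity of Brownian increments they are identically distributed with $\P(A_n)=p_L>0$. Let $n_1<n_2<\cdots$ denote the (a.s.\ infinite) sequence of indices with $A_{n_k}$ occurring. The strong law of large numbers applied to $\mathbf 1_{A_n}$ gives $n_k/k\to 1/p_L$ a.s., and therefore
\[
\frac{n_{k+1}}{n_k}=\frac{n_{k+1}/(k+1)}{n_k/k}\cdot\frac{k+1}{k}\longrightarrow 1\quad\text{a.s.}
\]

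Finally, for $t\geq n_1 L$, pick $k=k(t)$ with $n_kL\leq t<n_{k+1}L$. The monotonicity of $\Gamma$ recorded above gives
\[
\Gamma(B)_{t,(n_{k+1}+1)L}\geq \Gamma(B)_{n_{k+1}L,(n_{k+1}+1)L}\geq K,
\]
so $T(t)\leq (n_{k+1}+1)L$ and therefore
\[
\frac{T(t)-t}{t}\leq \frac{(n_{k+1}-n_k+1)L}{n_kL}=\frac{n_{k+1}}{n_k}-1+\frac{1}{n_k}\xrightarrow[k\to\infty]{}0\quad\text{a.s.}
\]
Since $k(t)\to\infty$ as $t\to\infty$, this gives $T(t)=t+o(t)$ a.s., which is \eqref{eq:Tt}.

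The only delicate point is the a.s.\ positivity of $\Gamma(B)_{0,1}$, which rests on the observation that the argument of the sup in the definition of $\Gamma$ must be thought of as concentrated on the atypical times $s$ where $B$ has a right-sided local minimum (for $\P$-a.e.\ fixed $s$ one actually has $t(s)=s$ by Blumenthal's $0$--$1$ law); once positivity is in hand, the rest is a standard SLLN/monotonicity argument and no further pathwise control on $\Gamma$ is needed.
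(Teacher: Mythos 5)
Your argument is correct, and it rests on the same basic blocking idea as the paper's proof---split time into intervals on which the increments of $B$ are independent, use that $\Gamma(B)_{S,T}$ is nonincreasing in $S$ and nondecreasing in $T$, and conclude by a limit theorem---but the ingredients differ at two points. For the positivity of the block probability, the paper simply invokes the full support property of Brownian motion to get $\P(\Gamma(B)_{0,\varepsilon}\geq K)>0$ for every $\varepsilon>0$, whereas you prove $\Gamma(B)_{0,1}>0$ a.s.\ via the a.s.\ interior (and unique) argmin of $B$ on $[0,1]$ and then use the scaling identity $\Gamma(B)_{0,L}\overset{d}{=}L^{3/2}\,\Gamma(B)_{0,1}$ to make $p_L$ close to $1$ for large $L$; both are valid, and your route is more elementary (no support theorem), at the price of needing long blocks rather than arbitrarily short ones. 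For the conclusion, the paper fixes $\varepsilon>0$, bounds $\P(T(n)-n>\varepsilon n)$ by the probability that all $n$ independent block events inside $[n,n+\varepsilon n]$ fail, i.e.\ by $(1-p_\varepsilon)^n$, applies Borel--Cantelli, and reduces general $t$ to integers by monotonicity; you instead run a strong-law argument on the success indices $n_k$ of fixed-length blocks, in the spirit of Lemma~\ref{lem:lln}, and control $T(t)$ for all $t$ directly through the block containing $t$. The paper's version yields as a by-product a geometric tail bound for $\P(T(n)-n>\varepsilon n)$ at each fixed $\varepsilon$, while yours is self-contained and makes explicit the measurability of the block functionals with respect to the corresponding increments, which the paper leaves implicit; both deliver exactly \eqref{eq:Tt}.
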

\begin{proof}
  Note that, since $\Gamma(B)_{t,s} \geq \Gamma(B)_{n,s}$ if $t\leq n \leq s$, it suffices to show the claim for integer $n$'s. In addition, the full support property of Brownian motion implies that, for each $\varepsilon >0$, $p_\ep:=\P(\Gamma(B)_{0,\varepsilon} \geq C)  \in (0,1)$ and we have that
    \[ \P( T(n) - n > \varepsilon n ) =  \P\left( \Gamma(B)_{0,\varepsilon n} \leq C \right)  \leq \P\left( \Gamma(B)_{0,\varepsilon} \leq C\ , \ldots,  \Gamma(B)_{(n-1)\varepsilon,n \varepsilon} \leq C\right) = p_{\varepsilon}^n.\]
It follows from the Borel-Cantelli lemma that, $\P$-a.s.,
    \[  \lim_n \frac{T(n)-n}{n} = 0,\]
and the proof is complete.
\end{proof}

We now show how all the previous results can be combined for  the stochastic mean curvature SPDE
\begin{equation}\label{takis65}
%\partial_t 
du = \dfrac{\partial_{xx}u}{1+(\partial_x u)^2}\; dt  +\sqrt{1+(\partial_x u)^2} \circ d{B} \ \ \text{in} \ \ \T\times \R_+,
\end{equation} 
and, thereby, that the interplay of stochastic fluctuation with deterministic decay leads to an improved decay of oscillations (see Remark \ref{rmk:mcf_limit}).

\begin{theorem}\label{takis68}
Fix $\delta>0$ and assume that $B=(B(\omega,t))_{t\geq0}$ is  a Brownian motion on a probability space $(\Omega, \mathcal{F},\P)$. 
There exists $c >0$ and a $\P$-a.s. finite random variable $C(\omega)$, such that,  if $u$ is the solution to \eqref{takis65} with initial condition $u_0 \blue{\in C(\T)}$, then, for all $t\geq 0$,  
\begin{equation} \label{eq:OscSmcf}
\osc(u(\cdot,t)) \leq C(\omega) \osc(u_0) e^{-c t}.
\end{equation}
Let $\tau(u_0,B) := \inf \left\{t \geq 0: \;\; \osc(u(\cdot, t)) \leq 2 \right\}$. 
Then there is a deterministic $c>0$, and, for almost every Brownian path, constants  $C(B),  \rho(B)$ such that
\begin{equation} \label{eq:tau}
\tau(u_0,B) \le c \log(1+\osc(u_0)) + C(B) 
\end{equation}
and
\begin{equation} \label{eq:uxSmcf}
\|u_x(\cdot, t)\|_{\infty} \leq C(\omega) \osc(u_0) e^{-c t}  \ \ \text{for all} \ \  t \geq \tau(u_0,B) + \rho(B).
\end{equation}
Finally, for each fixed $u_0$,
\begin{equation} \label{eq:expdecay}
 { %\limsup_{t \to \infty} \frac{\ln( \osc(u(\cdot,t)) }{t} \leq - \frac{\delta}{\lambda_1}, }\;\;\;\;\;
\limsup_{t \to \infty} \frac{\ln(\|u_x(\cdot,t)\|_{L^2}) }{t} \leq - \frac{\delta}{\lambda_1}.}
\end{equation}
\end{theorem}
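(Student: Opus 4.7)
The plan is to split the analysis into three phases corresponding to different regimes of the gradient $u_x$ and oscillation $\osc(u)$, invoking one of the main tools of this subsection in each.

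\textbf{Phase 1: large-oscillation regime.} First, I would perform the change of variable $\tilde u := u - B(t)$, which leaves the dissipation invariant and converts the Hamiltonian to $\tilde H(p) = \sqrt{1+p^2}-1$, satisfying $\tilde H(0)=0$, convexity, and $\tilde H(p) \geq \tfrac{1}{2}|p|-\tfrac{1}{2}$. This places the equation within the scope of Theorem~\ref{thm:quantifiedbetter} with $q=1$ and $\alpha=1$. Combining the resulting bound with the almost sure linear growth $N^{C}_{0,T}(B) \geq c_0 T - C(B)$ given by Lemma~\ref{lem:lln} yields
\[
\osc(u(\cdot,T)) \leq \|u(\cdot,T)\|_{TV} \leq C(B)\,\osc(u_0)\, e^{-c T} + C'.
\]
Choosing $T$ to make the right-hand side drop below $2$ gives $\tau(u_0,B) \leq c\log(1+\osc(u_0)) + C(B)$, which is exactly \eqref{eq:tau}.

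\textbf{Phase 2: the gradient becomes bounded.} Setting $\tau := \tau(u_0,B)$, I would apply Theorem~\ref{thm:G} on the interval $[\tau,T]$. For the SMCF, $(F' H'')^{1/2}(p) = \sqrt{\delta}(1+p^2)^{-5/4}$, so
\[
G(r) = \sqrt{\delta}\int_0^r (1+s^2)^{-5/4}\,ds
\]
is an odd, increasing function, bounded by $G_\infty = \sqrt{\delta}\int_0^\infty(1+s^2)^{-5/4}ds$, and behaves like $\sqrt{\delta}\,r$ near $0$. Theorem~\ref{thm:G} gives $\|G(u_x(\cdot,T))\|_\infty^2 \leq 2/\Gamma(B)_{\tau,T}$, which can be inverted once $\Gamma(B)_{\tau,T}$ is large enough relative to $G_\infty^2$. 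By Lemma~\ref{lem:T} together with the strong Markov property and stationarity of Brownian increments, one extracts a $\P$-a.s.\ finite random constant $\rho(B)$, independent of $\tau$, such that $\|u_x(\cdot,T)\|_\infty \leq M$ for some deterministic $M$ and all $T \geq \tau + \rho(B)$.

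\textbf{Phase 3: small-gradient regime and exponential decay.} Once $\|u_x\|_\infty \leq M$, the equation is uniformly parabolic: $F'(u_x) \geq \delta/(1+M^2)$. Proposition~\ref{prop:Gronwall} then provides exponential $L^2$ decay of $u_x$ with rate $\delta/((1+M^2)\lambda_1)$. A bootstrap argument, valid because $\|u_x\|_\infty \to 0$ and hence the effective $M$ improves in time, pushes the decay rate to $\delta/\lambda_1$, proving \eqref{eq:expdecay}. The pointwise estimate \eqref{eq:uxSmcf} follows by coupling the $L^2$ decay with standard parabolic regularity in the uniformly parabolic regime, and by using monotonicity of $\osc(u(\cdot,t))$ together with the one-dimensional Sobolev embedding $\osc(u)\lesssim \|u_x\|_{L^2}$ to propagate the $\osc(u_0)$ prefactor. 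The oscillation bound \eqref{eq:OscSmcf} is then derived by combining the Phase~1 decay (for $t \leq \tau$) with the Phase~3 exponential decay (for $t \geq \tau + \rho(B)$), the rate $c$ being chosen small enough so that the factor $e^{c\tau} \leq (1+\osc(u_0))^{c/c_0}\,e^{cC(B)/c_0}$ is absorbed into the $C(\omega)\,\osc(u_0)$ prefactor.

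\textbf{Main obstacle.} The delicate point is constructing a single random constant $\rho(B)$ in Phase~2 that works uniformly in the starting time $\tau(u_0,B)$, since $\tau$ itself depends on the initial data. This requires controlling path-by-path the time needed for Brownian motion to accumulate sufficient oscillation starting from an arbitrary deterministic or stopping time, a fact that is obtained from the asymptotics of Lemma~\ref{lem:T} together with the stationarity of Brownian increments. A secondary difficulty is propagating the clean factor $\osc(u_0)$ through all three phases: the logarithmic-in-$\osc(u_0)$ length of Phase~1 produces polynomial-in-$\osc(u_0)$ prefactors after exponentiation, so the decay rate $c$ in \eqref{eq:OscSmcf}--\eqref{eq:uxSmcf} must be taken strictly smaller than the Phase~1 rate in order for these prefactors to collapse into a linear $\osc(u_0)$ dependence.
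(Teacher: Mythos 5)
Your plan reproduces the three-phase structure of the paper's proof and invokes the right lemmas at the right stages: the change of variable $\tilde u = u - B$, Theorem~\ref{thm:quantifiedbetter} combined with Lemma~\ref{lem:lln} to reduce $\osc(u)$ to $O(1)$ in time logarithmic in $\osc(u_0)$, Theorem~\ref{thm:G} plus Lemma~\ref{lem:T} to convert a small oscillation at an earlier time into a small $\|u_x\|_\infty$, and Proposition~\ref{prop:Gronwall} for the exponential $L^2$ decay once the gradient is bounded. The bootstrap improving the rate in \eqref{eq:expdecay} to $\delta/\lambda_1$ as $\|u_x\|_\infty\to 0$, and the observation that the decay rate $c$ must be chosen small enough to absorb $e^{c\tau}$ into the $\osc(u_0)$ prefactor, also match what the paper does. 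The one substantive deviation is in how you pass from the $L^2$ decay of $u_x$ to the sup-norm bound \eqref{eq:uxSmcf}. You propose ``standard parabolic regularity in the uniformly parabolic regime''; but the equation here is a Stratonovich SPDE with a rough-in-time forcing, so interior parabolic regularity is not in fact standard and would need a separate justification in the pathwise framework. The paper sidesteps this by \emph{reapplying} Theorem~\ref{thm:G}: once $\osc(u(\cdot,t))<4$ one gets $\|u_x(\cdot,T(t))\|_\infty \leq \sqrt{\osc(u(\cdot,t))}$, and then the exponential decay of $\osc(u)$ (from $L^2$ decay together with $\osc(u)\le \|u_x\|_{L^1}\le \|u_x\|_{L^2}$) transfers to $\|u_x\|_\infty$ with a halved rate. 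This route costs a factor of $2$ in the exponent but requires no regularity input beyond what is already established pathwise; in this respect it is cleaner than the argument you sketch. Your identification of the uniformity of $\rho(B)$ in the starting time $\tau$ as the delicate point is also correct, and is handled in the paper through the same $o(t)$ asymptotics of Lemma~\ref{lem:T}.
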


\begin{proof}
We first remark that \eqref{eq:tau} is an immediate consequence of \eqref{eq:OscSmcf}.
\vs
For the rest of the claims similarly to Example~1 we introduce   $\tilde u = u-B$ and note  that $\tilde u$ satisfies 
\begin{equation}\label{eqn:spde-quant}
d \tilde u=\partial_{x}(F(\partial_{x} \tilde u)) dt +H(\partial_{x} \tilde u)\circ{dB}\ \ \text{in } \ \ \T \times \R_+ \ \ \text{and} \ \ \tilde u(0, \cdot )=u_0 \ \ \text{on} \ \ \T,
\end{equation}
for $F(\rho)=\arctan (\rho)$ and $H(p) = \sqrt{1+p^{2}}-1$ with  $H(p) \geq |p| -1$.
\vs

The claimed estimates are obtained in the following three steps: (i)~Theorem ~\ref{thm:quantifiedbetter} yields that  $\osc(u(\cdot, t))$ is of  order $1$ after a time which depends logarithmically on $\osc(u_0)$, (ii)~in view of Theorem~ \ref{thm:G}, after the  time given by (i), $\|u_x\|_{\infty}$ is  also of order $1$, and (iii)~the uniform bound on $ u_x$, which is a consequence of (ii) is then used  to obtain exponential convergence to a constant. 
\vs

Here we show (i). Theorem~\ref{thm:quantifiedbetter} implies that, for all $t\geq 0$, 
\[  \osc(u(\cdot,t)) \leq \osc(u_0) e^{-c N^C_{0,t}(B)} +2.\]
Also note that, due to Lemma \ref{lem:lln}, a.s. and  as $t \to \infty$, $N^C_{0,t}(B) \sim C^{-1/2} t$. 
\vs
It follows that,  for some $c>0$, $C(B)$ a.s. finite and for all $t\geq 0$, 
\begin{equation} \label{eq:Osc2}
osc(u(\cdot,t)) \leq C(B) \osc(u_0) e^{-c t} +2. 
 \end{equation}
 
We now use  Theorem \ref{thm:G} with $G= [(F' H'')^{1/2}]$, which has the property that, for some $K>0$, \[G(r) \geq \dfrac{r}{K^{1/2}} \ \text{ for $0\leq r\leq 2$ \ \ and \ \  $G(r) \geq \frac{2}{K^{1/2}}$ for $r >2$.} \]

Let $T(t)$ be defined as in Lemma \ref{lem:T}, and note that, due to  Theorem \ref{thm:G}, that $\osc(u(\cdot,t)) < 4$ yields 
\[\|G(u_x(\cdot,T(t)))\|_{\infty} \leq  \sqrt{\frac{\osc(u(\cdot,t))}{K}} < \frac{2}{K^{1/2}}, \]
and, hence, 
\[ \|u_x(\cdot, T(t))\|_{\infty} \leq K^{1/2}  \|G(u_x(\cdot, T(t)))\|_{\infty} \leq \sqrt{\osc(u(\cdot,t))}. \]
\vs
In view of  Lemma \ref{lem:T}, the last inequality implies that, for $t$ large enough, 
\begin{equation} \label{eq:utoux}
 \|u_x(\cdot,t)\|_{\infty} \leq { \sqrt{\osc(u(\cdot,t-o(t)))}},
\end{equation} 
 and, 
 in particular,  for all $t \geq T(\tau(u_0,B))$, $\|u_x(\cdot,t)\|_{\infty} \leq 2$.
  \vs
It also follows from Proposition \ref{prop:Gronwall} that,  if $\|u_x(\cdot, t_0)\|_{\infty} \leq  \eta$, then, for all $ t\geq t_0$, 
\begin{equation} \label{eq:L2}
%\blue{\osc(u(\cdot,t))} \le 
\|u_x(\cdot,t)\|_{L^2} \leq \|u_x(\cdot,0)\|_{L^2} e^{- \frac{\delta}{\lambda_1(1+\eta^2)} (t-t_0)},
\end{equation}%\todo{Would that not lead to $L^2$ norm instead (4.39)?}
which, combined with \eqref{eq:utoux},  gives  \eqref{eq:uxSmcf}, and with \eqref{eq:Osc2}, implies \eqref{eq:OscSmcf}.
\vs
{  
It only remains to prove \eqref{eq:expdecay}, which now follows immediately from \eqref{eq:L2}, recalling that,by \eqref{eq:uxSmcf} it holds that $\|u_x(\cdot,t)\|_{\infty} \underset{t\to \infty} \to 0$. 
}%
% Since \eqref{eq:uxSmcf} yields $\|u_x(\cdot,t)\|_{\infty} \underset{t\to \infty} \to 0$, again Proposition \ref{prop:Gronwall} gives  that $\|u_x(\cdot, t)\|_{L^2} \underset{t\to \infty} \lesssim e^{-(1+\varepsilon) t}$, and,
%hence,  we may conclude.% using \eqref{eq:utoux}.
\end{proof}

\begin{remark}\label{rmk:mcf_limit}
Note that \eqref{eq:OscSmcf} may seem expected,  since it is similar to, for example, the  long-time behavior of solutions to the heat equation. This is not, however, the case since,  due to the degeneracy of the mean curvature operator, the solution of the deterministic problem,  that is, when $B\equiv 0$,  exhibits very different behavior.
\vs

Indeed, it is possible to  construct a sequence of solutions $u^R$ to the deterministic mean curvature flow with $\osc(u^R(\cdot,0)) = R$, such that, for $R$ large enough, some $c>0$, and all $t \leq c R$, 
\[ \osc(u^R(\cdot,t)) \geq \frac{R}{2}, \]
which is in sharp contrast with the stochastic setting where, in view of Theorem~\ref{takis68}, the solutions become of order $1$ in a time which only depends logarithmically of the size of the initial condition.
\vs
To construct $u^R$, one can proceed, similarly to \cite{CM04}, by taking as initial condition a graph ``sandwiched''  between two ''grim reaper''-type solutions of the mean curvature flow,  which are finite on a compact interval and move at constant speed of order $R$ downwards (resp. $-R$ and upwards). It then follows from the maximum principle that  the corresponding solution stays in between the grim reapers, and, therefore, its oscillation is greater than $R/2$ for times of order $R$. 
\end{remark}

\begin{remark}
It follows from  Remark \ref{rem:Constants} that in \eqref{eq:OscSmcf} it is possible  to take take any constant $c$ satisfying
\[ c < \min [\frac{2}{e}, \frac{\delta}{\lambda_1}].\]
\end{remark}
\vs
We conclude this section discussing the  example of SPDEs with  nonlinear diffusion degenerating at zero. 

\vs
We recall that the deterministic $p$-Laplace equation 
\begin{equation}\label{takis69}
       \partial_t u 
       = \partial_{x}(u_x)^{[\alpha]} 
     \end{equation}
admits separated variables solutions of the form  $u = g(t)f(x)$ with $g(t) = ((\alpha - 1)t))^{-\frac{1}{\alpha-1}}$, $f(x)+\partial_{x}(f_x)^{\alpha}=0$ and $f$ periodic with average zero. 
It follows that solutions to \eqref{takis69} have decay of order $ t^{-\frac{1}{\alpha-1}}$.  The same optimal order of decay can also be found in signed self-similar solutions (see Hulshof \cite{H89}) on the full space. \vs

In the case of SPDEs, if the order of degeneracy $\a$ of the parabolic part is much larger than the degeneracy $\b$ of the noise in the sense that $\b\le\frac{\a-1}{2}$, then the decay due to the noise is faster than that of the parabolic part, hence, improving the rate of convergence compared to the deterministic case. The parabolic part is only used in the end to improve from a $W^{1,q}$ to a Lipschitz estimate.

\begin{example}\label{eq:polynomial_SPDE}
   We consider nonlinear fluctuating diffusion SPDEs like
     \begin{align*}
       %\partial_t 
       du 
       &= \partial_{x}((u_x)^{[\alpha]})\;dt+ |u_x|^{\beta} \circ dB,
     \end{align*} 
   with $\alpha, \beta > 1$, $B$ Brownian motion, and  \blue{$u_0 \in C(\T)$}. Then, for all $T>0$, 
     \begin{equation} \label{eq:oscuT4}
     \|u_x(\cdot, T)\|_{L^\beta}  \leq C \left( \left(\osc_{0,T}B /C_1\right)^{-\frac{1}{\beta-1}} \right)
     \end{equation}
     and
     $$ \|u_x(\cdot, T) \|_\infty \le C \left( \left(\osc_{0,T-1}B /C_1\right)^{-\frac{1}{\beta-1}} \right)^\frac{\beta}{\alpha+\beta-1} \approx C T^{-\frac{\beta}{2(\beta-1)(\alpha+\beta-1)}}.$$
   Note if $\beta \approx 1$, the decay above exceeds  significantly the optimal deterministic decay $ t^{-\frac{1}{\alpha-1}}$.
\end{example}
Indeed,   Theorem \ref{thm:quantifiedbetter} implies \eqref{eq:oscuT4}, while, in view of  Proposition \ref{prop:deg_diss} with 
  $$G(r)=\int_0^r (w^{\alpha-1}w^{\beta-2})^\frac{1}{2}(w)dw = \int_0^r w^\frac{{\alpha+\beta-3}}{2}\ dw =r^\frac{{\alpha+\beta-1}}{2}, $$
   we find 
    $$G^2( \|u_x(\cdot, T) \|_\infty)\le C\int_\T (u_x(\cdot, T-1))^\beta \,dx.$$
  Therefore,
   $$G^2( \|u_x(\cdot, T) \|_\infty)\le C \left( \left(\osc_{0,T-1}B /C_1\right)^{-\frac{1}{\beta-1}} \right)^\beta,$$  
and,  thus,
     $$ \|u_x(\cdot, T) \|_\infty \le C \left( \left(\osc_{0,T-1}B /C_1\right)^{-\frac{1}{\beta-1}} \right)^\frac{\beta}{\alpha+\beta-1}.$$ 
  
\section{Open questions} \label{sec:open}

We discuss a number of questions about the general theory and the examples presented earlier. 
\vs

Regarding the latter, some of the most intriguing questions are extensions to non-compact domains, the characterization of the random limiting constant, the asymptotic 
behavior of stochastic Hamilton-Jacobi equations with multiple spatially inhomogeneous noises, as well as with non-convex Hamiltonians, and the properties of the stationary solutions $\psi:\T^d\times \R\to \R$.
\vs
The present work unveils an accelerated decay of oscillations for the mean-curvature flow in 2+1 dimensions with spatially homogeneous noise. An extension of these results to higher dimension, in particular, the case of 3+1 dimensions driven by multiple noises is left as an open problem. Moreover, the case of the mean curvature flow with spatially inhomogeneous noise is left untouched, and constitutes a challenging open question.

\appendix

\section{Motivations and examples of nonlinear SPDEs}\label{sec:motivations}

We present  a number of applications and settings that give rise to SPDEs which fall within the framework of this paper. The discussion here  is meant to provide background information and motivation for the examples of SPDEs treated in the main text.  
\vs
We emphasize that the following exposition is meant to be informal and the list of included references is by no means complete. Whenever possible, we refer to monographs and contributions offering exhaustive accounts on the available literature. Listing all of the pertinent literature would be beyond the scope of this appendix.

\subsection[Joint mean field-local interaction limits]{Joint mean field-local interaction limits of particle systems and mean field games with common noise}
\vs

The conditional empirical density measure $\mu^N := \mathcal{L}(\frac{1}{N} \sum_{j=1}^{N}\delta_{X_t^j}\mid \mathcal{W})$ of the mean field-type  interacting particle system
\begin{equation}\label{eqn:MCKV}
dX_t^i = b(X_t^i, \frac{1}{N} \sum_{j=1}^{N}\delta_{X_t^j})dt + \sigma(X^i_t, \frac{1}{N} \sum_{j=1}^{N}\delta_{X_t^j}) \circ dW_t + \alpha(X_t^i, \frac{1}{N} \sum_{j=1}^{N}\delta_{X_t^j})dB_t, 
\end{equation}
converges, in the mean field limit $N\to\infty$, to a solution of the nonlinear, nonlocal, stochastic Fokker Planck equation 
\begin{equation}
\label{target equation}
	\begin{array}{l}
		d\mu
		= [ \partial^2_{i,j}(a^{ij}(x,t,\mu) \mu)
		- \partial_{i} (b^i(x,t,\mu) \mu) ]dt  
		- \partial_{i}(\sigma^{ik}(x,t,\mu)\mu) \circ dW^k_t \ \ \ \ \mu(\cdot,0) = \mu_0,
	\end{array}
\end{equation}
where $a^{ij}=\frac{1}{2}\a^{ik}\a^{jk}$ and $W$ and $B$ are independent Brownian motions. This has been shown for $L^2$-valued solutions by Kurtz and Xiong \cite{KX04} and for measure-valued solutions by Coghi and Gess \cite{CG19}. For further work on this, we refer to the references in these works. The conditional law signifies the fact that $W$ acts on all particles and, hence, it is called common noise. See also \cite{K08} for a motivation of the same class of SPDE arising in statistical mechanics. 
\vs
For an exposition of the same limiting problem in the context of mean field games with common noise, we refer to Carmona and Delarue \cite[Section 2.1.2]{CD18} and Cardaliaguet, Delarue, Lasry and Lions \cite{CDLL19}, and the references therein. 
\vs 

The coefficients $a^{ij}$, $b^i$, $\sigma^{ik}$ depend on the measure $\mu$ in a possibly non-local manner which correspond to nonlocal interactions in the original system \eqref{eqn:MCKV}. 
\vs
It is a natural question to ask what happens when the interaction is localized, that is, when $a^{ij}$, $b^i$, $\sigma^{ik}$ are replaced by a sequence of coefficients $a^{ij,\ve}$, $b^{i,\ve}$, $\sigma^{ik,\ve}$ which converge to coefficients with local dependence on the measure, that is, informally, for  $\rho \in L^1$, 
\begin{align*}
  a^{ij,\ve}(x,t,\rho\ dx)\to a^{ij}(x,t,\rho(x)),\ 
  b^{i,\ve}(x,t,\rho\ dx)\to b^{i}(x,t,\rho(x)),\  
  \sigma^{ik,\ve}(x,t,\rho\ dx)\to \sigma^{ik}(x,t,\rho(x)).
\end{align*}
Such limit then leads, always informally, to   the nonlinear, stochastic Fokker Planck equation
\begin{equation}
\label{target equation_2}
	\begin{array}{l}
		d\rho
		= [ \partial^2_{i,j}(a^{ij}(x,t,\rho) \rho)
		- \partial_{i} (b^i(x,t,\rho) \rho) ]dt 
		- \partial_{i}(\sigma^{ik}(x,t,\rho)\rho) \circ  dW^k_t,
	\end{array}
\end{equation} 
with the coefficients now depending  on $\rho$ in a local manner.
\vs
A relevant example is the case for which the diffusion without common noise, that is, when  $\sigma\equiv 0$ in \eqref{eqn:MCKV}, is reversible with respect to a Gibbs' measure $\frac{1}{Z} e^{-V}$. This corresponds to the choice $b^i(x,t,\mu) = a^{ij}(x,t,\mu)\partial_jV(x,t,\mu)$. In this case, the nonlinear, stochastic Fokker Planck equation \eqref{target equation_2} becomes  
\begin{equation}
\label{target equation_3}
	\begin{array}{l}
		d \rho
		= [\partial_{i}(a^{ij}(x,t,\rho) \partial_j\rho)
		+ \partial_{i} (a^{ij}(x,t,\rho)(\partial_j V)(x,t,\rho) \rho)]dt  
		- \partial_{i}(\sigma^{ik}(x,t,\rho)\rho) \circ  dW^k_t.
	\end{array}
\end{equation} 
We discuss here the special case where the non-local interaction is of  convolution-type, that is, 
\begin{align*}
  &a^{ij,\ve}(x,t,\mu)= (V^{1,\ve} \ast \mu)(x)a^{ij}(x),\ \   b^{i,\ve}(x,t,\mu)= (V^{2,\ve} \ast \mu)(x)b^i(x) \ \ \text{and} \ \ \\\
  &\sigma^{ik,\ve}(x,t,\mu)=(V^{3,\ve} \ast \mu)(x)\sigma^{ik}(x),
\end{align*}
where  $(V^{i,\ve})_{\ep>0}$ is a smooth and compactly supported Dirac family.
\vs
Then \eqref{target equation_3} becomes 
\begin{equation}
\label{target equation_4}
	\begin{array}{l}
		d\rho
		= [\partial_{i}(a^{ij}(x)\rho \partial_j\rho)
		+ \partial_{i} (a^{ij}(x)\partial_j V(x)\rho^2 )]dt  
		- \partial_{i}(\sigma^{ik}(x)\rho^2 \circ  dW^k_t).
	\end{array}
\end{equation} 
and, when $d=1$, $V\equiv 0$ and $\sigma^{ik}(x)=1$,  
 \begin{equation}
\label{target equation_5}
	\begin{array}{l}
		d \rho
		= \partial_x(a(x)\rho \partial_x\rho) dt
		- \partial_x(\rho^2 \circ  dW_t),
	\end{array}
\end{equation} 
an equation to which our methods apply  leading to a quantified convergence estimate for $\rho$ to its mean without any non-degeneracy assumptions on the coefficient $a\ge0$. 
\vs
The well-posedness of solutions to \eqref{target equation_5} for spatially homogeneous $a$ follows from Gess and Souganidis \cite{Ge.So2015}. For inhomogeneous, regular $a$, the well-posedness should follow from a modification of these arguments, but is left as an open problem here.

%TODO: Note, however, that the resulting solutions satisfy $\rho \ge 0$, whereas in this work we concentrate on average zero solutions. By transformation this should be ok: \bar u := u-x-B. Then also need inner transformation to get new H again non-negative. But should work. Corresponds to second order Taylor approximation of $H$ so that new $H$ behaves quadratically.

\subsection{Stochastic front propagation}\label{sec:front_propagation}
Consider an evolving hypersurface $\left(\Gamma(t)\right)_{t \geq 0}$ embedded in $\R^d$ moving with  normal velocity (at point $X(t)$ $\in$ $\Gamma(t)$)
\[ { d  X(t) \cdot  n_{\Gamma}(X(t)) }= -\kappa(X(t)) dt + d \zeta(X(t),t), \]
where $n_{\Gamma}(x)$ is the outer normal at $x \in \Gamma$, $\kappa(x)$ is the scalar curvature of $\Gamma$ at $x$, and $\zeta$ is a stochastic perturbation.
\vs

In  the level-set formulation of this evolution, $\Gamma(t)$ is assumed to be  the zero set at time $t$ of the solution $u$ of  the SPDE
\begin{equation}\label{takis810}
d u= | Du | {\rm div} \left( \frac{Du}{|Du|}\right) dt + |Du | \circ  d\zeta(x,t).
\end{equation}
\vs
For particular cases of the noise $d\zeta$, the SPDE \eqref{takis810} can be studied using stochastic viscosity solutions, see, for example \cite{LS98}, Souganidis and Yip \cite{So.Yi.04}, Souganidis \cite{So.19}, Yip  \cite{Yi.2002} and  Dirr, Luckhaus and Novaga \cite{DLN01}. 
\vs
If  $\Gamma(t)$ is of graph-form, that is, 
\[\Gamma(t) = \left\{ (z, f(z,t)) \; : \; z \in \R^{d-1} \right\},\]
then  $f$  satisfies  the graph  SPDE
\[ d f = \sqrt{1+| Df |^2} {\rm div} \left( \frac{Df}{\sqrt{1+| Df |^2}}\right) dt + \sqrt{1+| Df |^2} \circ d\zeta(z,t,f(z,t)). \]

In the physics literature, Kawasaki and Ohta \cite{KO82} %(see also \cite{KK01}) 
derived an equation in the case where $d\zeta$ is space-time white noise acting on the interface $\Gamma$, in which case the  graph SPDE  can be (formally) rewritten as
 \begin{equation} \label{eq:KO}
  \partial_t f = \sqrt{1+| Df |^2} {\rm div} \left( \frac{Df}{\sqrt{1+| Df |^2}}\right) + \left(1+| Df |^2 \right)^{1/4}\circ d \xi(z,t), 
  \end{equation}
where $d\xi$ is a space-time white noise on $\R \times \R^{d-1}$. This can be seen  by checking that
\[ d\zeta(z,t,f(z,t)) := \left(1+| Df(z,t) |^2 \right)^{-1/4} d\xi(z,t) \]
defines a space-time white noise on $\Gamma$, that is,  for any smooth compactly supported function $\phi$, $\int_{\Gamma} \phi \zeta d\mu_{\Gamma}$ is a centered Gaussian with variance given by $\int \phi^2 d\mu_{\Gamma}$. Here the measure $\mu_{\Gamma}$ is the surface measure on $\Gamma$, which can be explicitly written as
\[  \int \psi(x,s) ds d\mu_{\Gamma}(x) =  \int_{\R \times \R^{N-1}}  ds dz \phi(z,s,f(z,s)) \sqrt{1+|Df(z,s)|^2} .\]
Note that \eqref{eq:KO} is purely formal even when $d=2$, since its solution $f$ would be too irregular for the nonlinear gradient term to be well-defined. In fact, it is not clear  that the solution should remain a graph, and it is an open question to which extent one can make sense of this equation rigorously. 
\vs 

Of course, it is possible to  consider more general motions either in the deterministic term or the stochastic perturbation like, for example, purely stochastic velocities of the form 
\[ { d  X(t) \cdot  n_{\Gamma}(X(t)) } =a\left(X(t), n(X(t))\right)) \circ d{B}(t),\]
where $n(x)$ is the outer normal to the point $x \in \Gamma(t)$. This motion   can be interpreted as an interface fluctuating according to a space-homogeneous external factor, but with a local speed  depending on a fixed parameter which can be inhomogeneous and anisotropic.  The corresponding level SPDE is 
\[du=a(x,-\dfrac{Du}{|Du|}) |Du| \circ dB,\]
which can be studied using the theory of stochastic viscosity solutions developed by the last two authors.

\vs
Assuming that the interface is of graph-form, that is,  $\Gamma(t) = \left\{(z,f(z,t))\right\}$, and that the heterogeneity does not depend on the last coordinate,  that is, $a(x, v) = a(z, v)$ where $x=(z,x_n)$,  yields  the SPDE 
\[ d f(t,z) = H(z, D f) \circ dB \] 
with 
\[H(z, D f):= a \left(z, \dfrac{(D_z f, 1) }{\sqrt{1+|Df|^2} }\right )\sqrt{1+|Df|^2}.\]
When the function $H$ is convex in its second variable, which is, for instance, the case for isotropic motions, that is,  $a=a(z)$, then the problem is within  the framework developed in section~\ref{sec:inhomogeneous}.

\subsection{Thermodynamically consistent fluctuation corrections to gradient flows}\label{sec:motivation_GF}

We outline how SPDEs arise from thermodynamically consistent fluctuation corrections to gradient flows. 
\vs
We consider nonlinear diffusion PDEs in 1+1 dimension of the form
\begin{equation}\label{takis100}
  \partial_t \rho 
  = \partial_{xx} \Phi(\rho),
\end{equation}
which come up  in various applications, including interacting particle systems, such as the zero range process (see, for example, Kipnis and Landim \cite{KL99}), geometric PDE, such as the curve shortening and mean curvature flows  (see, for example, Funaki and Spohn \cite{Fu.Sp1997}), %Es-Sahir and Renesse \cite{ER12}), 
and in the form of porous medium diffusion equations, see, for example, the monograph by V\'azquez \cite{V07}.  
\vs
Introducing, for some nonlinear, convex function $\Psi$ with antiderivative $[\Psi]$,  the energy $E(\rho)=\int [\Psi](\rho)\,dx$ with functional derivative denoted by $\frac{d}{d\rho} E(\rho)$, and the Onsager operator $M_\rho=B_\rho B_\rho^*$ with $$B_\rho g:=\partial_x \left(\left(\frac{\Phi'(\rho)}{\Psi'(\rho)}\right)^\frac{1}{2}g\right),$$
we can informally rewrite \eqref{takis100}  as the following gradient flow on the space of densities: 
\begin{align*}
  d \rho 
  &= \partial_{xx} \Phi(\rho) 
  = \partial_{x}\left( \frac{\Phi'(\rho)}{\Psi'(\rho)}\partial_{x}\Psi(\rho)\right)
  = M_\rho \frac{d}{d\rho} E(\rho).
\end{align*}

The choice of the energy $E$ and the corresponding function $\Psi$ identifies  a gradient structure on the space of densities.
\vs
Following the general ansatz for thermodynamically consistent fluctuations around gradient flows (see, for example, {\"O}ttinger \cite{ottinger2005beyond}), the corresponding fluctuating gradient flow becomes
\begin{equation}\begin{split}\label{eqn:gen_fluct}
 d \rho 
  &= M_\rho \frac{d}{d\rho} E(\rho) + BdW_t = \partial_{xx} \Phi(\rho) dt +  \partial_x \left(\left(\frac{\Phi'(\rho)}{\Psi'(\rho)}\right)^\frac{1}{2}\circ d {\xi}\right),
\end{split}\end{equation}
with $d\xi$ a space-time white noise. 
\vs

In fact, lattice based particle systems correspond to spatially correlated noise 
\begin{equation}\begin{split}\label{eqn:gen_fluct_2}
  \rho_t 
  &= \partial_{xx} \Phi(\rho)  +  \partial_x \left(\left(\frac{\Phi'(\rho)}{\Psi'(\rho)}\right)^\frac{1}{2}\circ d{\xi^\ve}\right),
\end{split}\end{equation}
where $d\xi^\ve$ is white in time and correlated in space with decorellation length of the order of mesh-size; see, for example, Giacomin, Lebowitz and Presutti \cite{GLP99} and  Mariani \cite{M10} for the case of the simple exclusion process.
\vs
Motivated from the above, we analyze the long-time behavior of parabolic-hyperbolic SPDEs with spatially homogeneous noise $B$, on the level of the anti-derivative $u$, that is, 
\begin{equation}\begin{split}\label{eqn:general_HJB}
 d u
  &= \partial_{x} \Phi(\partial_{x}u) dt +   \left(\frac{\Phi'(\partial_{x}u)}{\Psi'(\partial_{x}u)}\right)^\frac{1}{2}\circ{dB}.
\end{split}\end{equation}
\vs

We next introduce two  particular examples of this general framework. The first  is the stochastic mean curvature flow
\begin{equation}\label{eqn:smc}
  d u=\dfrac{\partial_{xx}u}{1+(\partial_{x}u)^{2}} dt  +\sqrt{1+(\partial_{x}u)^{2}}\circ dB,
\end{equation}
which corresponds to $\Phi(\rho)=\arctan(\rho)$. Then, 
$ \left(\dfrac{\Phi'(\rho)}{\Psi'(\rho)}\right)^\frac{1}{2}=\sqrt{1+\rho^{2}}$, that is, 
$\Psi(\rho)=\frac{1}{2} (\frac{\rho}{1 + \rho^2} + \arctan(\rho))$ in \eqref{eqn:general_HJB}.
\vs
The next example addresses the Kawasaki-Ohta equation (see \cite{KO82}), which constitutes the choice of a thermodynamically consistent noise for the curve shortening flow. Following the framework laid out in the beginning of this subsection, for $d=1$, the PDE for the derivative $\rho=u_x$ of the curve given as the graph of $u$ can be written as a gradient flow 
\begin{align*}
  \partial_t \rho 
  &= \partial_x \left((1+\rho^2)^\frac{1}{2}\ \partial_x \left(\frac{\rho}{\sqrt{1+\rho^2}}\right)\right) = M_\rho \frac{d}{d\rho} E(\rho),
\end{align*}
where $\frac{d}{d\rho} E(\rho)$ is the functional derivative of the energy $E(\rho)=\int (1+\rho^2)^\frac{1}{2}\,dx$ and $M=BB^*$ is the Onsager operator with $B_\rho g=:\partial_x \left((1+\rho^2)^\frac{1}{4}g\right)$. 
\vs

Note that,  with this choice of a gradient structure, the curve shortening flow corresponds to a gradient flow with respect to the arclength as energy $E$. In accordance to \eqref{eqn:general_HJB}, on the level of $u$, the fluctuating stochastic curve shortening flow with spatially homogeneous noise takes the form 
\begin{align*}
   d u
  &= \dfrac{\partial_{xx}u}{1+(\partial_{x}u)^{2}} dt   +  (1+(\partial_xu)^2)^\frac{1}{4} \circ{dB}.
\end{align*}

\bibliographystyle{plain}
\bibliography{HJ}

\begin{flushleft}
\footnotesize \normalfont
\textsc{Paul Gassiat \\
Ceremade, Universit\'e de Paris-Dauphine, PSL University\\
Place du Mar\'echal-de-Lattre-de-Tassigny\\
75775 Paris cedex 16, France} \\
\texttt{\textbf{gassiat@ceremade.dauphine.fr}}
\end{flushleft}

\begin{flushleft}
\footnotesize \normalfont
\textsc{Benjamin Gess\\
Fakult\"at f\"ur Mathematik, Universit\"at Bielefeld\\
Bielefeld, D-33615\\
and \\
Max--Planck--Institut f\"ur Mathematik in den Naturwissenschaften\\ 
Leipzig, D-04103} \\
\texttt{\textbf{bgess@math.uni-bielefeld.de}}
\end{flushleft}

\begin{flushleft}
\footnotesize \normalfont
\textsc{Pierre-Louis Lions\\
Coll\`{e}ge de France,\\
11 Place Marcelin Berthelot, 75005 Paris, \\
CEREMADE, \\
and \\
Universit\'e de Paris-Dauphine,\\
Place du Mar\'echal de Lattre de Tassigny,\\
75016 Paris, France}\\
\texttt{\textbf{lions@ceremade.dauphine.fr}}
\end{flushleft}

\begin{flushleft}
\footnotesize \normalfont
\textsc{Panagiotis E.\ Souganidis\\
Department of Mathematics \\
University of Chicago, \\
5734 S. University Ave.,\\
Chicago, IL 60637, USA}\\
\texttt{\textbf{souganidis@math.uchicago.edu}}
\end{flushleft}

\end{document}